\documentclass[12pt,a4paper]{amsart}
\usepackage{amsmath}
\usepackage{amsthm}
\usepackage{amssymb}
\usepackage{drpack}
\usepackage[english]{babel}
\usepackage{verbatim}
\usepackage[shortlabels]{enumitem}
\usepackage{color}
\usepackage{tikz-cd}

\newtheoremstyle{mio}%
{}{} % spazio sopra e sotto%
{\itshape}{} % corpo del testo, indentation
{\bfseries}{.}{ } % titolo del teorema: tipo di testo, divisore, spaziatura
{#1 #2\thmnote{~\mdseries(#3)}} % formattazione nota
\theoremstyle{mio}
\newtheorem{teor}{Theorem}[section]
\newtheorem{cor}[teor]{Corollary}
\newtheorem{prop}[teor]{Proposition}
\newtheorem{lemma}[teor]{Lemma}
\newtheorem{defin}[teor]{Definition}

\newtheoremstyle{definition2}%
{}{} % spazio sopra e sotto%
{}{} % corpo del testo, indentation
{\bfseries}{.}{ } % titolo del teorema: tipo di testo, divisore, spaziatura
{#1 #2\thmnote{\mdseries~ #3}} % formattazione nota
\theoremstyle{definition2}
\newtheorem{ex}[teor]{Example}
\newtheorem{oss}[teor]{Remark}

\title{The local Picard group of a ring extension}
\author{Dario Spirito}
\address{Dipartimento di Scienze Matematiche, Fisiche e Informatiche, Universit\`a di Udine, Udine, Italy}
\email{dario.spirito@uniud.it}
\keywords{Picard group; ring extension; Jaffard families; integer-valued polynomials}
\subjclass[2020]{13C20; 13B02; 13F05}

\newcommand{\Int}{\mathrm{Int}}
\newcommand{\Pic}{\mathrm{Pic}}

\newcommand{\njaff}{\mathcal{N}}
\newcommand{\locpic}{\mathrm{LPic}}
\newcommand{\Inv}{\mathrm{Inv}}
\newcommand{\Princ}{\mathrm{Princ}}
\newcommand{\XX}{\mathbf{X}}
\newcommand{\B}{\mathcal{B}}

\begin{document}
\begin{abstract}
Given an integral domain $D$ and a $D$-algebra $R$, we introduce the local Picard group $\locpic(R,D)$ as the quotient between the Picard group $\Pic(R)$ and the canonical image of $\Pic(D)$ in $\Pic(R)$, and its subgroup $\locpic_u(R,D)$ generated by the the integral ideals of $R$ that are unitary with respect to $D$. We show that, when $D\subseteq R$ is a ring extension that satisfies certain properties (for example, when $R$ is the ring of polynomial $D[X]$ or the ring of integer-valued polynomials $\Int(D)$), it is possible to decompose $\locpic(R,D)$ as the direct sum $\bigoplus\locpic(RT,T)$, where $T$ ranges in a Jaffard family of $D$. We also study under what hypothesis this isomorphism holds for pre-Jaffard families of $D$.
\end{abstract}

\maketitle

\section{Introduction}
Let $D$ be an integral domain. The \emph{Picard group} of $D$, denoted by $\Pic(D)$, is the quotient between the group of invertible (fractional) ideals and the group of principal ideals or, equivalently, the group of isomorphism classes of rank-one projective modules. The Picard group of $D$ is connected, among other topics, to the factorization properties of $D$: for example, if $D$ is a Dedekind domain, then $\Pic(D)$ is trivial if and only if $D$ is a unique factorization domain.

The Picard group is essentially a global property of a ring, in the sense  that it cannot be recovered from the localizations: indeed, the Picard group of a local ring is always trivial. However, in some cases it is possible to understand the structure of the Picard group by choosing carefully a family of localizations: for example, when considering the ring $R=\Int(D)$ of the integer-valued polynomials on a Dedekind domain $D$, there is an exact sequence
\begin{equation}\label{eq:PicInt-intro}
0\longrightarrow\Pic(D)\longrightarrow\Pic(\Int(D))\longrightarrow\bigoplus_{M\in\Max(D)}\Pic(\Int(D_M))\longrightarrow 0,
\end{equation}
where $\Int(D_M)=\Int(D)D_M$ is a localization of $\Int(D)$ \cite[Theorem VIII.I.9]{intD}. Since each $D_M$ is a valuation domain, $\Pic(\Int(D_M))$ is known \cite[Theorem VIII.2.8]{intD}, and thus we can calculate $\Pic(D)$ by localization.

The previous result was generalized in \cite{PicInt} in the context of Jaffard families: a \emph{Jaffard family} of $D$ is a family of flat overrings of $D$ that is complete, independent and locally finite (see Section \ref{sect:prelim:jaffard} for the definitions of these properties). A typical example of a Jaffard family is the family $\{D_M\mid M\in\Max(D)\}$ of localizations of a one-dimensional locally finite domain (for example, a one-dimensional Noetherian domain). It was shown \cite[Proposition 4.3]{PicInt} that, if $\Theta$ is a Jaffard family of $D$, then we can construct an exact sequence analogous to \eqref{eq:PicInt-intro} for $\Pic(\Int(D))$, and subsequently \cite[Theorem 4.7]{PicInt} that there is an isomorphism
\begin{equation}\label{eq:locpic-intro}
\locpic(\Int(D),D)\simeq\bigoplus_{T\in\Theta}\locpic(\Int(T),T),
\end{equation}
where $\locpic(\Int(A),A)$ is the quotient between $\Pic(\Int(A))$ and the canonical image of $\Pic(A)$. (We use the notation we introduce in Section \ref{sect:picard} of the present paper instead of the notation used in \cite{PicInt}.) These results were then extended beyond the Jaffard family context with a derived set-like construction \cite[Sections 6 and 7]{PicInt} first introduced in \cite{jaff-derived}.

The purpose of this paper is to extend \eqref{eq:PicInt-intro} and \eqref{eq:locpic-intro} beyond the case of integer-valued polynomials to the more general framework of $D$-algebras. In particular, we want to understand when, given a Jaffard family $\Theta$ of $D$ and a $D$-algebra $R$, we have an isomorphism
\begin{equation}\label{eq:locpic-intro-2}
\locpic(R,D)\simeq\bigoplus_{T\in\Theta}\locpic(RT,T),
\end{equation}
fully reducing the study of the local Picard group $\locpic(R,D)$ to localizations; moreover, we want to see whether it is possible to generalize these results to the more general case of pre-Jaffard families. However, the isomorphism \eqref{eq:locpic-intro} does not hold for general $D$-algebras: thus, the first part of the paper (Sections \ref{sect:retract}, \ref{sect:unitary}, \ref{sect:pseudopol}) is devoted to introducing and exploring the additional hypothesis we need to put on $R$ in order for the isomorphism to hold, while the second part (from Section \ref{sect:picard} onward) adapts the proof of the integer-valued polynomial case to this more general setup. We shall see that several polynomial-like constructions satisfy these conditions: for example, \eqref{eq:locpic-intro-2} holds not only for $R=\Int(D)$ (which is the topic of \cite{PicInt}), but also for $R=\Int(E,D)$ (the ring of integer-valued polynomials on any $E\subseteq D$), the polynomial ring $R=D[X]$ and the Bhargava ring $R=\mathbb{B}_x(D)$ (see \cite{yeramian-bhargava}).

More specifically, in Section \ref{sect:retract} we study $D$-algebras $R$ that can be endowed with a retract, i.e., with a map $R\longrightarrow D$ that is a $D$-algebra homomorphism, showing that they are all extensions of $D$ such that $R\cap K=D$ (where $K$ is the quotient field of $D$; Proposition \ref{prop:retract}). In Section \ref{sect:unitary}, we develop the theory of unitary ideals: a fractional ideal $I$ of $R$ is unitary with respect to $D$ if $I\cap K\neq(0)$. We introduce the subgroup $\Pic_u(R,D)$ of $\Pic(R)$ as the subgroup generated by the classes of the unitary integral ideals of $R$, and we show that $\Pic(R)/\Pic_u(R,D)$ is isomorphic to $\Pic(RK)$ (Proposition \ref{prop:quoz-picu}). In Section \ref{sect:pseudopol}, we introduce pseudo-polynomial algebras over $D$ as those algebras where every unitary principal ideal is actually generated by an element of $D$, and we show (Proposition \ref{prop:pseudopol-pol}) that this notion encompasses several constructions contained in the ring of polynomials $K[X]$.

In Section \ref{sect:picard} we introduce the local Picard group $\locpic(R,D)$ and the unitary local Picard group $\locpic_u(R,D)$ as, respectively, the quotient of $\Pic(R)$ and $\Pic_u(R,D)$ by the canonical image of $\Pic(D)$. We show that these constructions are functorial (Proposition \ref{prop:locpic-funct}) and that for retract $D$-algebras they are actually direct summand of $\Pic(R)$ and $\Pic_u(R,D)$, respectively (Proposition \ref{prop:locpic-directsum}).

In Section \ref{sect:jaffard} we take the proofs of \eqref{eq:PicInt-intro} and \eqref{eq:locpic-intro} and show how they can be adapted to the case of $D$-algebras: in order to hold in the more general context, we need to restrict ourselves to algebras that are retract and pseudo-polynomial and, instead to $\locpic(R,D)$, the best results are obtained when dealing with the group $\locpic_u(R,D)$ induced by unitary integral ideals (Theorems \ref{teor:exseq-inttype} and \ref{teor:exseq-inttype-jaffard}). We also show several special cases of these theorems. Finally, in Section \ref{sect:preJaff}, we show under what hypothesis the results about Jaffard families can be generalized to pre-Jaffard families using the derived sequence.

\section{Preliminaries}\label{sect:prelim}
Throughout the paper, $D$ is an integral domain and $K$ is its quotient field. We also suppose that $D\neq K$, i.e., that $D$ is not a field.

A \emph{fractional ideal} of $D$ is a $D$-submodule $I$ of $K$ such that $dI\subseteq D$ for some $d\in D$, $d\neq 0$. We shall often refer to a fractional ideal simply as an ``ideal'', while using the term ``integral ideal'' to refer to fractional ideals contained in $D$ (i.e., to ideals of $D$ in the usual sense).

An \emph{overring} of $D$ is a ring between $D$ and $K$. If $D\subseteq R$ is a ring extension, with $R$ being an integral domain, then the quotient field of $R$ contains $K$, and thus it makes sense to consider the intersection $R\cap K$. Moreover, if $T$ is an overring of $D$, then the set
\begin{equation*}
RT:=\left\{\sum_ir_it_i\mid r_i\in R,t_i\in T\right\}
\end{equation*}
is well-defined, and it is a ring and an overring of $R$. In particular, if $T=S^{-1}D$ is a localization of $D$, then $RT=S^{-1}R$ is a localization of $R$.

We say that two elements $a,b\in D$ are \emph{associated} in $D$ if they generate the same principal ideal, or equivalently if there is a unit $u$ such that $a=ub$.

If $E$ is a subset of $K$, the ring of \emph{integer-valued polynomials} on $E$ is
\begin{equation*}
\Int(E,D):=\{f\in K[X]\mid f(E)\subseteq D\};
\end{equation*}
we also set $\Int(D):=\Int(D,D)$. See \cite{intD} for facts about integer-valued polynomials.

\subsection{The Picard group}\label{sect:prelim:pic}
A fractional ideal $I$ of $D$ is \emph{invertible} if there is a fractional ideal $J$ such that $IJ=D$; in this case, $J=(D:I)=\{x\in K\mid xI\subseteq D\}$. Every invertible ideal is finitely generated. The set $\Inv(D)$ of all the invertible ideals of $D$ is a group under the product of ideals, with identity element $D$, and contains as a subgroup the set $\Princ(D)$ of all the principal fractional ideals of $D$. The quotient
\begin{equation*}
\Pic(D):=\frac{\Inv(D)}{\Princ(D)}
\end{equation*}
is called the \emph{Picard group} of $D$; if $I$ is an invertible ideal, we denote by $[I]$ the class of $I$ in $\Pic(D)$. The Picard group can also be constructed as the set of all isomorphism classes of all projective modules of rank $1$, with operation given by $[P]\cdot[Q]:=[P\otimes Q]$.

The Picard group is a functorial construction, in the sense that a ring homomorphism $\phi:A\longrightarrow B$ induces a map $\phi^\ast:\Pic(A)\longrightarrow\Pic(B)$. If $A,B$ are domains and $\phi$ is injective then $\phi^\ast$ coincides with the extension of ideals: that is, if $I$ is an invertible ideal of $A$, then $\phi^\ast([I])=[IB]$.

\subsection{Jaffard and pre-Jaffard families}\label{sect:prelim:jaffard}
A \emph{flat overring} of $D$ is an overring that is also flat as a $D$-module. Let $\Theta$ be a family of flat overrings of $D$: we say that $\Theta$ is:
\begin{itemize}
\item \emph{complete} if $I=\bigcap\{IT\mid T\in\Theta\}$ for every ideal $I$ of $D$;
\item \emph{independent} if $TT'=K$ for every $T\neq T'$ in $K$;\footnote{For general (not necessarily flat) overrings, independence requires that there is no nonzero prime $P$ such that $PT\neq T$ and $PT'\neq T'$ \cite[Section 6.2]{fontana_factoring}, but this condition reduces to $TT'=K$ for flat overrings \cite[Lemma 3.4 and Definition 3.5]{jaff-derived}.}
\item \emph{locally finite} if for every $x\in D$, $x\neq 0$, there are only finitely many $T\in\Theta$ such that $x$ is not a unit in $D$.
\end{itemize}

A \emph{Jaffard family} of $D$ is a family of flat overrings that is complete, independent and locally finite, and such that $K\notin\Theta$. In particular, if $\Theta$ is a Jaffard family, for every nonzero prime ideal $P$ of $D$ there is a unique $T\in\Theta$ such that $PT\neq T$. If $T$ is a flat overring of $D$, then $T$ is a \emph{Jaffard overring} if $T$ belongs to a Jaffard family of $D$; this condition can be checked by defining the \emph{orthogonal} of $T$ (with respect to $D$) as
\begin{equation*}
T^\perp:=\bigcap\{D_P\mid PT=T\}.
\end{equation*}
Indeed, $T$ is a Jaffard overring if and only if $TT^\perp=K$, and in this case $\{T,T^\perp\}$ is a Jaffard family. See \cite{starloc} and \cite[Section 6.3]{fontana_factoring} for properties of Jaffard families.

The \emph{Zariski topology} on the set $\Over(D)$ of overrings of $D$ is the topology generated by the $\B(x):=\{T\in\Over(D)\mid x\in T\}$, as $x$ ranges in $K$. A \emph{pre-Jaffard family} of $D$ is a family $\Theta$ of flat overrings that is complete, independent, such that $K\notin\Theta$, and compact in the Zariski topology. Any Jaffard family is also a pre-Jaffard family, and a pre-Jaffard family $\Theta$ is Jaffard if and only if every $T\in\Theta$ is a Jaffard overring. If $\Theta$ is a pre-Jaffard family and there is a $T_\infty\in\Theta$ such that every $T\in\Theta\setminus\{T_\infty\}$ is a Jaffard overring, we say that $\Theta$ is a \emph{weak Jaffard family pointed at $T_\infty$}.

Let  $\Theta$ be a pre-Jaffard family. We can associate to $\Theta$ two sequences, one $\{\njaff^\alpha(\Theta)\}_\alpha$ of subsets of $\Theta$, and another $\{T_\alpha\}_\alpha$ of overrings of $D$, both indexed by ordinal numbers, in the following way:
\begin{itemize}
\item $\njaff^0(\Theta):=\Theta$, $T_0:=D$;
\item if $\alpha=\gamma+1$ is a successor ordinal, then $\njaff^\alpha(\Theta)$ is the set of members of $\njaff^\gamma(\Theta)$ that are not Jaffard overrings of $T_\gamma$;
\item if $\alpha$ is a limit ordinal, then $\njaff^\alpha(\Theta):=\bigcap\{\njaff^\gamma(\Theta)\mid \gamma<\alpha\}$;
\item $T_\alpha:=\bigcap\{T\mid T\in\njaff^\alpha(\Theta)\}$.
\end{itemize}
Then, $\{\njaff^\alpha(\Theta)\}_\alpha$ is decreasing and $\{T_\alpha\}_\alpha$ is increasing; moreover, $\njaff^\alpha(\Theta)$ is always a pre-Jaffard family of $T_\alpha$. A weak Jaffard family is just a pre-Jaffard family such that $\njaff^1(\Theta)$ is empty or a singleton. We call $\{T_\alpha\}_\alpha$ the \emph{derived sequence} with respect to $\Theta$. If $T_\alpha=K$ for some $\alpha$, we say that the pre-Jaffard family $\Theta$ is \emph{sharp}. See \cite{jaff-derived} for properties of pre-Jaffard families and of the derived sequence.

\section{Retract algebras}\label{sect:retract}
\begin{defin}
Let $D$ be an integral domain, and let $R$ be a $D$-algebra that is an integral domain. We say that $R$ is a \emph{retract $D$-algebra} if there is a $D$-algebra homomorphism $\epsilon:R\longrightarrow D$, which we call a \emph{retract} of $R$ onto $D$.
\end{defin}

\begin{ex}
~\\
\begin{enumerate}
\item Let $R=D[X]$. For any $i\in D$, the evaluation homomorphism $\epsilon_i(f(X))=f(i)$ is a $D$-algebra homomorphism, and thus $D[X]$ is a retract $D$-algebra. The same holds for polynomial rings $D[X_1,\ldots,]$ in any number of indeterminates.
\item If $K$ is the quotient field of $D$ and $i\in K$, then for every $E\ni 1$ the evaluation homomorphisms $\epsilon_i(f(X))=f(i)$ also makes $\Int(E,D)$ into a retract $D$-algebra. In particular, taking $i\in D$, we see that $\Int(D)$ is a retract $D$-algebra.
\item The power series ring $D[[X]]$ is a retract $D$-algebra, with retract given by the evaluation at $0$, i.e., with the map that associates to the power series $f_0+f_1X+\cdots$ the constant term $f_0$.
\end{enumerate}
\end{ex}

\begin{lemma}\label{lemma:retract-inj}
If $\epsilon:R\longrightarrow D$ is a $D$-algebra homomorphism and $R\neq D$, then $\epsilon$ is not injective.
\end{lemma}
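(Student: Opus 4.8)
The plan is to argue by contraposition: I would assume $\epsilon$ is injective and deduce $R=D$. The key preliminary observation is that, because $\epsilon$ is a homomorphism of $D$-algebras and $D$ carries the canonical $D$-algebra structure given by the identity, $\epsilon$ must be compatible with the two structure maps; identifying $D$ with its (injective) image in $R$ under the extension $D\subseteq R$, this forces $\epsilon(d)=d$ for every $d\in D$. In particular $\epsilon$ is surjective and fixes $D$ pointwise, and these are the only features of $\epsilon$ I will use.

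Granting this, I would show directly that every $r\in R$ already lies in $D$. The mechanism is that $\epsilon$ behaves like a projection onto $D$: for any $r\in R$ we have $\epsilon(r)\in D$, hence $\epsilon(\epsilon(r))=\epsilon(r)$, and since $\epsilon$ is a ring homomorphism
\begin{equation*}
\epsilon\bigl(r-\epsilon(r)\bigr)=\epsilon(r)-\epsilon\bigl(\epsilon(r)\bigr)=\epsilon(r)-\epsilon(r)=0.
\end{equation*}
Thus $r-\epsilon(r)\in\ker\epsilon$. If $\epsilon$ is injective this kernel is trivial, so $r=\epsilon(r)\in D$; as $r$ was arbitrary this gives $R\subseteq D$, and with $D\subseteq R$ we obtain $R=D$, contradicting $R\neq D$.

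I do not expect a genuine obstacle here, since the statement is elementary; the one step deserving care is the first, namely unwinding the definition of ``$D$-algebra homomorphism'' to conclude that $\epsilon$ restricts to the identity on $D$. Conceptually, this is the assertion that the composite $R\xrightarrow{\epsilon}D\hookrightarrow R$ is an idempotent endomorphism of $R$ whose image is $D$, and the displayed computation is precisely the observation that each element differs from its projection by something in $\ker\epsilon$; injectivity then collapses this difference to zero.
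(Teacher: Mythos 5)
Your proof is correct and is essentially the paper's argument in contrapositive form: both rest on the observation that a $D$-algebra homomorphism $\epsilon$ fixes $D$ pointwise, so that $r$ and $\epsilon(r)$ always have the same image under $\epsilon$. The paper exhibits this directly as a failure of injectivity (for $r\in R\setminus D$ the elements $r$ and $\epsilon(r)$ are distinct but $\epsilon(r)=\epsilon(\epsilon(r))$), while you package the same fact as $r-\epsilon(r)\in\ker\epsilon$ and conclude $R=D$ from triviality of the kernel.
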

\begin{proof}
If $r\in R\setminus D$ then $r\neq\epsilon(r)$ and $\epsilon(r)=\epsilon(\epsilon(r))$ since $\epsilon(r)\in D$, and thus $\epsilon$ is not injective.
\end{proof}

\begin{prop}\label{prop:retract}
Let $D$ be an integral domain, and let $R$ be a retract $D$-algebra.
\begin{enumerate}[(a)]
\item\label{prop:retract:ext} $R$ is an extension of $D$ (i.e., the canonical $D$-algebra homomorphism $D\longrightarrow R$ is injective).
\item\label{prop:retract:subring} If $S\subseteq R$ is a $D$-algebra, then $S$ is a retract $D$-algebra.
\item\label{prop:retract:capK} If $K$ is the quotient field of $D$, then $R\cap K=D$.
\item\label{prop:retract:Spec} The restriction map of spectra
\begin{equation*}
\begin{aligned}
\Spec(R) & \longrightarrow \Spec(D),\\
P& \longmapsto P\cap D,
\end{aligned}
\end{equation*}
is surjective.
\item\label{prop:retract:dim} If $R\neq D$, then $\dim(R)\geq\dim(D)+1$.
\end{enumerate}
\end{prop}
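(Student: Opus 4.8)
The plan rests on a single observation: a retract $\epsilon\colon R\to D$ is a left inverse of the structure map $\iota\colon D\to R$. Indeed, since $\epsilon$ is a $D$-algebra homomorphism, $\epsilon(\iota(d))=\epsilon(d\cdot 1_R)=d\cdot\epsilon(1_R)=d$ for every $d\in D$, so $\epsilon\circ\iota=\mathrm{id}_D$; in particular $\epsilon$ is surjective. Part (a) is then immediate, since a map admitting a left inverse is injective. For part (b) I would simply restrict $\epsilon$: if $S\subseteq R$ is a $D$-subalgebra, its structure map is the corestriction of $\iota$, so $\epsilon|_S\colon S\to D$ is a $D$-algebra homomorphism with $\epsilon|_S\circ\iota=\mathrm{id}_D$, hence a retract of $S$.

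For part (c), identify $D$ with $\iota(D)\subseteq R$ and take $x\in R\cap K$, say $x=a/b$ with $a,b\in D$ and $b\neq 0$. Since $x=a/b$ in $K$ and $a,b,x$ all lie in $R$, the identity $bx=a$ holds in $R$; applying $\epsilon$ and using that it is a ring homomorphism fixing $D$ gives $b\,\epsilon(x)=a$ in $D$. As $b\neq 0$ and we are inside the field $K$, this forces $\epsilon(x)=a/b=x$, and since $\epsilon(x)\in D$ we conclude $x\in D$; the reverse inclusion $D\subseteq R\cap K$ is clear. For part (d), the relation $\epsilon\circ\iota=\mathrm{id}_D$ dualizes to $\iota^\ast\circ\epsilon^\ast=\mathrm{id}_{\Spec(D)}$ on spectra, where $\iota^\ast$ is precisely the restriction map $P\mapsto P\cap D$; thus $\epsilon^\ast$ is a section of it, and concretely $\epsilon^{-1}(Q)\cap D=Q$ for every prime $Q$ of $D$, so the restriction map is surjective.

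The main work is part (e), though it is conceptually short. Since $R\neq D$, Lemma \ref{lemma:retract-inj} shows $\epsilon$ is not injective, so $\mathfrak{a}:=\ker\epsilon$ is a nonzero ideal; because $R/\mathfrak{a}\cong D$ is a domain, $\mathfrak{a}$ is in fact a nonzero prime of $R$. I would then take any finite chain of primes $(0)=Q_0\subsetneq Q_1\subsetneq\cdots\subsetneq Q_n$ of $D$ with $n=\dim(D)$ (possible since $D$ is a domain). As $\epsilon$ is surjective, the correspondence theorem yields a strictly increasing chain $\mathfrak{a}=\epsilon^{-1}(Q_0)\subsetneq\epsilon^{-1}(Q_1)\subsetneq\cdots\subsetneq\epsilon^{-1}(Q_n)$ of primes of $R$, all containing $\mathfrak{a}$. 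Since $\mathfrak{a}\neq(0)$ and $R$ is a domain, I can prepend the prime $(0)$ to obtain $(0)\subsetneq\mathfrak{a}=\epsilon^{-1}(Q_0)\subsetneq\cdots\subsetneq\epsilon^{-1}(Q_n)$, a chain of length $n+1$; hence $\dim(R)\geq\dim(D)+1$. The only point requiring care is the case $\dim(D)=\infty$, which is handled by running the same lifting-and-prepending argument on chains of arbitrary finite length.
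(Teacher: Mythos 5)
Your proposal is correct and follows essentially the same route as the paper: parts (a), (b), (d) are argued identically via the left-inverse property $\epsilon\circ\iota=\mathrm{id}_D$; your clearing-of-denominators computation in (c) is just the concrete form of the paper's observation that $D\subseteq K$ is an epimorphism; and your chain-lifting-plus-prepending-$(0)$ argument in (e) unpacks the paper's identification of $\Spec(D)$ with $V(\ker\epsilon)$ together with Lemma \ref{lemma:retract-inj}. If anything, your explicit treatment of the case $\dim(D)=\infty$ in (e) is slightly more careful than the paper's remark that this case is ``trivial.''
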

\begin{proof}
Let $\epsilon$ be a retract of $R$ onto $D$.

\ref{prop:retract:ext} Let $i:D\longrightarrow R$ be the canonical map of $D$-algebras. Then, $\epsilon\circ i$ is the identity on $D$; therefore, $i$ must be injective, i.e., $R$ is an extension of $D$.

\ref{prop:retract:subring} The restriction of $\epsilon$ to $S$ is a $D$-algebra homomorphism. Hence $S$ is a retract $D$-algebra.

\ref{prop:retract:capK} Let $D\subseteq D'\subseteq K$. Then, any ring homomorphism $\phi:D'\longrightarrow A$ is uniquely determined by the restriction $\phi|_D:D\longrightarrow A$, since any $x\in D'$ can be written as a quotient $y/z$ for $y,z\in D$. (That is, the extension $D\subseteq D'$ is an epimorphism.) In particular, if $R$ is a retract $D$-algebra and $D':=R\cap D$, then the retract $\epsilon$ is a ring homomorphism that is the identity on $D$; hence, it must be the identity on $D'$. However, this is impossible if $D'\neq D$. Thus $D'=R\cap K=D$.

\ref{prop:retract:Spec} The map $\epsilon$ induces a map $\epsilon^\ast:\Spec(D)\longrightarrow\Spec(R)$ given by $\epsilon^\ast(P)=\epsilon^{-1}(P)$: we claim that $P=\epsilon^{-1}(P)\cap D$. Indeed, since $\epsilon$ is the identity on $D$, we have $P\subseteq\epsilon^{-1}(P)\cap D$; likewise, if $d\in\epsilon^{-1}(P)\cap D$, then $d=\epsilon(d)\in P$ and so $\epsilon^{-1}(P)\cap D\subseteq P$. Hence the restriction map is surjective.

\ref{prop:retract:dim} If $\dim(D)$ is infinite the claim is trivial. Suppose that $\dim(D)<\infty$: then, since $\epsilon$ is surjective, $\Spec(D)$ is homeomorphic to $V(\ker\epsilon)\subseteq\Spec(R)$. Since $D$ and $R$ are integral domains, the dimensions of $\Spec(R)$ and $\Spec(D)$ can coincide only if $\ker\epsilon=(0)$, i.e., if $\epsilon$ is injective. This is impossible by Lemma \ref{lemma:retract-inj}. Therefore $\dim(R)\geq\dim(D)+1$, as claimed.
\end{proof}

\begin{oss}
If $R=D[X]$ or $R=\Int(D)$, there is more than one possible map $\epsilon$. In this case, the prime $\epsilon^{-1}(P)$ may change when changing the map, but its image under the restriction map is always $P$. For example, if $\epsilon_d$ is the map of $D[X]$ defined as the evaluation in $d$, then $\epsilon_d^{-1}(P)=(P,X-d)$.
\end{oss}

\begin{prop}\label{prop:retract-integral}
Let $D\subsetneq R$ be an integral extension of domains. Then, $R$ is not a retract $D$-algebra.
\end{prop}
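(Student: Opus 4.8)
The plan is to argue by contradiction, reducing everything to a statement about primes lying over $(0)$. Suppose $R$ admits a retract $\epsilon\colon R\longrightarrow D$. Since $\epsilon$ is a $D$-algebra homomorphism, it restricts to the identity on $D$; in particular it is surjective, so $\ker\epsilon$ is a prime ideal of $R$ with $R/\ker\epsilon\simeq D$. Moreover $\ker\epsilon\cap D=(0)$, which is exactly the computation in the proof of Proposition \ref{prop:retract}\ref{prop:retract:Spec} applied to the prime $P=(0)$ (and which also follows at once from the injectivity of $\epsilon|_D$). Thus $\ker\epsilon$ is a prime of $R$ lying over the zero ideal of $D$.

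The crux is then to use integrality to force $\ker\epsilon=(0)$. In an integral extension of domains $D\subseteq R$, the only prime of $R$ contracting to $(0)$ is $(0)$ itself; this is the incomparability theorem for integral extensions. Granting this, $\ker\epsilon=(0)$, so $\epsilon$ is injective, contradicting Lemma \ref{lemma:retract-inj} (which applies because $D\subsetneq R$). I expect the only genuine work to lie in justifying this crux step, so I would either cite the standard incomparability theorem or include the short direct argument: if a nonzero $q$ lay in such a prime $Q$, I would pick a monic integral dependence relation for $q$ of minimal degree; its constant term lies in $Q\cap D=(0)$ and hence vanishes, and dividing the relation by $q$ (legitimate since $R$ is a domain) yields a monic relation of strictly smaller degree, a contradiction.

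Finally, I would record a faster route available under a finiteness assumption: integral extensions preserve Krull dimension, so $\dim R=\dim D$, which contradicts the inequality $\dim R\geq\dim D+1$ of Proposition \ref{prop:retract}\ref{prop:retract:dim} whenever $\dim D<\infty$. The spectral argument above is preferable precisely because it dispenses with this hypothesis and covers the infinite-dimensional case uniformly.
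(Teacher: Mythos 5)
Your proof is correct and shares its skeleton with the paper's: both arguments identify $\ker\epsilon$ as a prime of $R$ contracting to $(0)$ in $D$ (using that $\epsilon$ is surjective and restricts to the identity on $D$) and then let integrality produce the contradiction. The difference lies in how integrality is brought to bear. The paper first gets $\ker\epsilon\neq(0)$ from Lemma \ref{lemma:retract-inj} and then localizes at $S:=D\setminus\{0\}$: the extension $K=S^{-1}D\subseteq S^{-1}R$ is integral, so $S^{-1}R$ is a field, while $S^{-1}\ker\epsilon$ would be a nonzero proper prime ideal of it (properness being exactly $\ker\epsilon\cap D=(0)$). You argue in the opposite order: incomparability --- or your self-contained minimal-degree dependence argument, which is sound, including the degenerate case $n=1$ --- forces $\ker\epsilon=(0)$, and only then does Lemma \ref{lemma:retract-inj} yield the contradiction. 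The two routes are equally general and of comparable length; both ultimately prove that in an integral extension of domains no nonzero prime lies over $(0)$. Yours avoids localization and is fully self-contained, while the paper's is quicker if one takes as known that a domain integral over a field is a field. Your closing remark is also accurate: the dimension count via Proposition \ref{prop:retract}\ref{prop:retract:dim} gives a faster contradiction but only when $\dim D<\infty$, so it is rightly relegated to an aside rather than used as the main argument.
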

\begin{proof}
Suppose there is a retract $\epsilon:R\longrightarrow D$; since $R\neq D$, $\epsilon$ is not injective (Lemma \ref{lemma:retract-inj}) and thus $\ker\epsilon\neq(0)$. Let $S:=D\setminus\{0\}$: then, $S^{-1}D\subseteq S^{-1}R$ is again an integral extension, and $S^{-1}D=K$ is the quotient field of $R$. Hence, $S^{-1}R$ must be a field too; however, $\ker\epsilon\cap D=(0)$, and thus $S^{-1}\ker\epsilon\neq S^{-1}R$ is a nonzero prime ideal above $(0)$, a contradiction. Hence $R$ is not a retract $D$-algebra.
\end{proof}

\begin{oss}
The condition $R\cap K=D$ is not sufficient for $R$ to be a retract $D$-algebra. For example, if $D$ is integrally closed and $R\neq D$ is an integral extension of $D$, then $R\cap K=D$, but $R$ is not a retract $D$-algebra by Proposition \ref{prop:retract-integral}.

The condition is not sufficient even if $D$ is integrally closed in $R$: for example, let $D=\insZ$ and let $R=\insZ[\alpha,\beta]\subseteq\overline{\insQ(X)}$, where $X$ is an indeterminate over $\insZ$ and $\alpha,\beta$ satisfy
\begin{equation*}
\begin{cases}
\alpha^2=X,\\
\beta^2=3-X.
\end{cases}
\end{equation*}
Then, $\insZ$ is integrally closed in $R$ and $R\cap\insQ=\insZ$. Suppose there is a retract $\epsilon:R\longrightarrow D$: since $\alpha^2+\beta^2=3$, we also must have
\begin{equation*}
\epsilon(\alpha)^2+\epsilon(\beta)^2=\epsilon(3)=3,
\end{equation*}
which is impossible in $\insZ$.
\end{oss}

We shall often deal with extending a retract $D$-algebra by a flat overring. In this context, the following result is useful.
\begin{prop}\label{prop:retract-extension}
Let $D$ be an integral domain and let $T$ be an overring of $D$. If $R$ is a retract $D$-algebra, then $RT$ is a retract $T$-algebra.
\end{prop}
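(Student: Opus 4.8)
The plan is to extend the retract $\epsilon\colon R\longrightarrow D$ to a $T$-algebra homomorphism $\widetilde{\epsilon}\colon RT\longrightarrow T$. Since $\epsilon$ is a $D$-algebra homomorphism it restricts to the identity on $D$, and as $D\subseteq T\subseteq K$ we may regard $\epsilon$ as a map $R\longrightarrow T$. The first observation I would record is that every element of $RT$ admits a common-denominator representation with numerator in $R$: given $w=\sum_i r_it_i$ with $r_i\in R$ and $t_i\in T\subseteq K$, write each $t_i=a_i/b_i$ with $a_i,b_i\in D$ and set $b:=\prod_i b_i\in D\setminus\{0\}$, so that $t_i=c_i/b$ with $c_i\in D$ and $w=s/b$ with $s:=\sum_i r_ic_i\in R$. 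Thus, computing inside the quotient field of $R$, every element of $RT$ is a fraction $s/b$ with $s\in R$ and $b\in D\setminus\{0\}$.

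On such representatives I would define $\widetilde{\epsilon}(s/b):=\epsilon(s)/b$. The first technical point is that this is independent of the chosen representative: if $s/b=s'/b'$ in the quotient field of $R$, then $sb'=s'b$ in $R$ because $R$ is a domain, and applying $\epsilon$ together with $\epsilon|_D=\mathrm{id}_D$ gives $\epsilon(s)b'=\epsilon(s')b$, whence $\epsilon(s)/b=\epsilon(s')/b'$ in $K$.

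The main obstacle is that a priori $\epsilon(s)/b$ lies only in $K$, so I must verify it actually lies in $T$. Here I would return to the sum representation: using $t_i=c_i/b$ and that $\epsilon$ fixes the $c_i\in D$, one computes
\begin{equation*}
\frac{\epsilon(s)}{b}=\frac{1}{b}\sum_i\epsilon(r_i)c_i=\sum_i\epsilon(r_i)\frac{c_i}{b}=\sum_i\epsilon(r_i)t_i.
\end{equation*}
Each summand is a product of $\epsilon(r_i)\in D\subseteq T$ and $t_i\in T$, so the sum lies in $T$; this simultaneously shows that $\widetilde{\epsilon}$ is given by the natural formula $\sum_i r_it_i\mapsto\sum_i\epsilon(r_i)t_i$, so that the value does not depend on the way $w$ is written as a sum either.

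It then remains to check the routine points. That $\widetilde{\epsilon}$ is a ring homomorphism follows immediately from the common-denominator formula and the fact that $\epsilon$ is a ring homomorphism (for multiplicativity, $\widetilde{\epsilon}((s/b)(s'/b'))=\epsilon(ss')/(bb')=\widetilde{\epsilon}(s/b)\widetilde{\epsilon}(s'/b')$, and additivity is analogous). Finally, for $t\in T$ written as $t=c/b$ with $c,b\in D$ one has $\widetilde{\epsilon}(t)=\epsilon(c)/b=c/b=t$, so $\widetilde{\epsilon}$ restricts to the identity on $T$ and is therefore a $T$-algebra homomorphism. Hence $RT$ is a retract $T$-algebra.
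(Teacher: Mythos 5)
Your proof is correct, and it reorganizes the argument in a way that is genuinely different from the paper's, even though the computational core is the same. The paper defines $\widetilde{\epsilon}$ directly by the sum formula $\sum_i r_it_i\mapsto\sum_i\epsilon(r_i)t_i$, so the entire proof consists in showing independence of the chosen sum representation; this is reduced to showing that $\sum_i r_it_i=0$ forces $\sum_i\epsilon(r_i)t_i=0$, which is done by clearing denominators (multiplying by $z=z_1\cdots z_n$ so that each $zt_i\in D$), applying $\epsilon$, and cancelling $z$ inside $K$. You instead define $\widetilde{\epsilon}$ on common-denominator fractions, $\widetilde{\epsilon}(s/b)=\epsilon(s)/b$ with $s\in R$, $b\in D\setminus\{0\}$; in other words, your $\widetilde{\epsilon}$ is the restriction to $RT$ of the localized map $S^{-1}\epsilon\colon RK\longrightarrow K$ with $S=D\setminus\{0\}$. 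This makes well-definedness nearly automatic (cross-multiplication in a domain plus $\epsilon|_D=\mathrm{id}_D$), and shifts the real work to checking that the value lands in $T$, which you do by returning to the sum representation --- the same clearing-of-denominators computation as in the paper, used in the opposite direction. Each approach buys something: the paper's is self-contained at the level of the natural formula, while yours isolates the only non-formal point (image contained in $T$) and shows in passing that $\epsilon$ always extends to a retract $RK\longrightarrow K$, with the content of the proposition being precisely that this extension respects the intermediate rings $RT$ and $T$.
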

\begin{proof}
Let $\epsilon:R\longrightarrow D$ be a retract. If $x\in RT$, then $x=r_1t_1+\cdots+r_nt_n$ for some $r_i\in R$, $t_i\in T$; we define
\begin{equation*}
\widetilde{\epsilon}(x):=\epsilon(r_1)t_1+\cdots+\epsilon(r_n)t_n\in DT=T.
\end{equation*}

We need to show that $\widetilde{\epsilon}$ is well-defined, i.e., that if 
\begin{equation*}
x=r_1t_1+\cdots+r_nt_n=s_1t'_1+\cdots+s_mt'_m
\end{equation*}
for some $r_i,s_i\in R$, $t_i,t'_i\in T$, then
\begin{equation*}
\epsilon(r_1)t_1+\cdots+\epsilon(r_n)t_n=\epsilon(s_1)t'_1+\cdots+\epsilon(s_n)t'_n.
\end{equation*}
To do so, it is enough to show that if $r_1t_1+\cdots+r_nt_n=0$ then $\epsilon(r_1)t_1+\cdots+\epsilon(r_n)t_n=0$.

For each $i$ we can write $t_i=y_i/z_i$ with $y_i,z_i\in D$, $z_i\neq 0$; let $z:=z_1\cdots z_n$. Then, $zt_i\in R$ for every $i$ and $zx=0$. Thus,
\begin{equation*}
0=\epsilon(zx)=\epsilon(z(r_1t_1+\cdots+r_nt_n))=\epsilon(r_1)(zt_1)+\cdots+\epsilon(r_n)(zt_n)
\end{equation*}
since each $zt_i$ is an element of $D$. The equality
\begin{equation*}
\epsilon(r_1)(zt_1)+\cdots+\epsilon(r_n)(zt_n)=0
\end{equation*}
is an equality in $K$ (the quotient field of $D$); therefore, we can simplify $z$ and obtain $\epsilon(r_1)t_1+\cdots+\epsilon(r_n)t_n=0$, which is what we needed to prove.

By construction, $\widetilde{\epsilon}$ is an homomorphism of $T$-algebras, and thus it is a retract. Hence $RT$ is a retract $T$-algebra.
\end{proof}

\begin{prop}
Let $D\subseteq R\subseteq A$ be integral domains. If $R$ is a retract $D$-algebra and $A$ is a retract $R$-algebra, then $A$ is a retract $D$-algebra.
\end{prop}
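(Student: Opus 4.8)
The plan is to simply compose the two retracts. Write $\epsilon\colon R\longrightarrow D$ for a retract of $R$ onto $D$, which exists because $R$ is a retract $D$-algebra, and $\eta\colon A\longrightarrow R$ for a retract of $A$ onto $R$, which exists because $A$ is a retract $R$-algebra. Both are ring homomorphisms; moreover $\epsilon$ restricts to the identity on $D$ (the structure map $D\longrightarrow R$ being the inclusion) and $\eta$ restricts to the identity on $R$. The candidate retract of $A$ onto $D$ is the composition $\epsilon\circ\eta\colon A\longrightarrow D$.

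As a composition of ring homomorphisms, $\epsilon\circ\eta$ is again a ring homomorphism, so the only thing to verify is that it is a homomorphism of $D$-algebras, i.e., that it restricts to the identity on $D$. First I would observe that, since $D\subseteq R$, the map $\eta$ — being an $R$-algebra homomorphism — is in particular a $D$-algebra homomorphism; hence $\epsilon\circ\eta$ is a composite of $D$-algebra homomorphisms and is therefore itself a $D$-algebra homomorphism. Concretely, for $d\in D$, regarded inside $A$ via the chain of inclusions $D\subseteq R\subseteq A$, one has $\eta(d)=d$ because $d\in R$ and $\eta$ fixes $R$, and then $\epsilon(\eta(d))=\epsilon(d)=d$ because $d\in D$ and $\epsilon$ fixes $D$. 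Thus $(\epsilon\circ\eta)|_D=\mathrm{id}_D$, so $\epsilon\circ\eta$ is a retract of $A$ onto $D$, and $A$ is a retract $D$-algebra.

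I do not expect any genuine obstacle: the statement is essentially the transitivity of the ``retract'' relation, and its whole content is that the composite of two retracts is again a retract. The only point requiring (minimal) care is the compatibility with the base ring $D$, namely that $\eta$ may be viewed as a $D$-algebra homomorphism and that the composite fixes $D$ pointwise; both are immediate from the inclusions $D\subseteq R\subseteq A$.
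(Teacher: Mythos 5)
Your proof is correct and is exactly the paper's argument: the paper also proves this by composing the two retracts, writing $\epsilon\circ\epsilon'\colon A\longrightarrow D$ as the retract of $A$ onto $D$. Your additional verification that the composite fixes $D$ pointwise is the (routine) detail the paper leaves implicit.
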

\begin{proof}
If $\epsilon:R\longrightarrow D$ and $\epsilon':A\longrightarrow R$ are retracts, then $\epsilon\circ\epsilon':A\longrightarrow D$ is a retract.
\end{proof}

\section{Unitary ideals}\label{sect:unitary}
An important feature of the theory of integer-valued polynomials is the concept of unitary ideals, i.e., ideals of $\Int(D)$ that meet $D$ nontrivially. We generalize this definition to any extension in the following way.
\begin{defin}
Let $D$ be an integral domain and $R$ an extension of $D$ that is an integral domain. We say that a fractional ideal $I$ of $R$ is \emph{unitary with respect to $D$} if $I\cap K\neq(0)$.
\end{defin}

When dealing with integral ideals, the defining condition of a unitary ideal can be written in a slightly different way.
\begin{lemma}\label{lemma:unitary-integral}
Let $D$ be an integral domain with quotient field $K$, and let $R$ be an extension of $D$. Let $I$ be an integral ideal of $R$. Then, $I$ is unitary if and only if $I\cap D\neq(0)$.
\end{lemma}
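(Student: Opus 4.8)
The plan is to prove the two implications separately, exploiting the inclusions $D\subseteq K$ and $D\subseteq R$ together with the fact that $I$ is an $R$-submodule.

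The forward direction is immediate: if $I\cap D\neq(0)$, then since $D\subseteq K$ we have $I\cap K\supseteq I\cap D\neq(0)$, so $I$ is unitary with respect to $D$ by definition. No work is required here beyond recalling that $D$ sits inside $K$.

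For the converse, I would assume that $I$ is unitary and produce a nonzero element of $I\cap D$ by clearing denominators. Choose a nonzero $x\in I\cap K$. Since $x\in K$ and $x\neq 0$, we may write $x=a/b$ with $a,b\in D$ and $b\neq 0$, so that $bx=a$. The key observation is that $b\in D\subseteq R$, and since $I$ is an ideal of $R$ and $x\in I$, we get $a=bx\in I$. On the other hand $a\in D$, and because $D$ is a domain with $b\neq 0$ and $x\neq 0$ we have $a\neq 0$. Therefore $a$ is a nonzero element of $I\cap D$, which shows $I\cap D\neq(0)$.

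There is no substantive obstacle in this argument; the only point demanding a moment's care is the assertion that the denominator-clearing scalar $b$ belongs to $R$, which holds precisely because $R$ is an extension of $D$, so that multiplication by $b$ preserves the $R$-ideal $I$. The hypothesis that $I$ is \emph{integral} (that is, $I\subseteq R$) is not actually needed for either implication; it is included only because this is the setting in which the reformulation will be applied.
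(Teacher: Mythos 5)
Your proof is correct and follows essentially the same route as the paper: the forward implication is the trivial inclusion $I\cap D\subseteq I\cap K$, and the converse clears denominators of a nonzero $x\in I\cap K$ by an element of $D\subseteq R$, using that $I$ is an $R$-module. Your closing remark is also accurate — the paper's argument likewise never uses $I\subseteq R$, so the equivalence holds for arbitrary fractional ideals.
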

\begin{proof}
If $I\cap D\neq(0)$, then also $I\cap K\neq(0)$. Conversely, if $x\in I\cap K$, $x\neq 0$, then there is a $y\in D$, $y\neq 0$ such that $yx\in D$. Thus $yx\in yI\cap D\subseteq I\cap D$ and $I\cap D\neq(0)$.
\end{proof}

\begin{prop}
Let $D$ be an integral domain and $R$ an extension of $D$  that is an integral domain. An integral ideal $I$ of $R$ is unitary if and only if $IK=RK$.
\end{prop}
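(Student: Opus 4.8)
The plan is to reduce the statement, via Lemma \ref{lemma:unitary-integral}, to the assertion that for an integral ideal $I$ of $R$ one has $I\cap D\neq(0)$ if and only if $IK=RK$. Since $I$ is integral, that lemma already identifies unitarity ($I\cap K\neq(0)$) with the condition $I\cap D\neq(0)$, so no further work is needed on the ``$I\cap K$'' side and the whole problem is about the product $IK$. The conceptual observation is that $RK$ is a localization of $R$: setting $S:=D\setminus\{0\}$ we have $K=S^{-1}D$ and hence $RK=S^{-1}R$, while $IK$ is the extension $S^{-1}I$ of $I$ to this localization. Indeed, since $I$ is an $R$-module and $K\subseteq RK$, the set $IK=\{\sum_i x_ik_i\mid x_i\in I,\ k_i\in K\}$ coincides with the ideal $I\cdot RK$; absorbing the $R$-part of each generator into $I$ shows the two descriptions agree. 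Thus the claim becomes the familiar criterion that an integral ideal extends to the unit ideal in a localization exactly when it meets the multiplicative set $S$.

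For the forward direction I would assume $I\cap D\neq(0)$ and choose a nonzero $d\in I\cap D\subseteq I\cap S$. Then $d$ is invertible in $RK$ with $d^{-1}\in K\subseteq RK$, so $1=d\cdot d^{-1}\in IK$ and therefore $IK=RK$. For the converse I would assume $IK=RK$, so that $1\in IK$ and $1=\sum_i x_ik_i$ with $x_i\in I$ and $k_i\in K$; clearing denominators over a common $s\in S$ (writing $k_i=c_i/s$ with $c_i\in D$) yields $s=\sum_i x_ic_i$. Here the hypothesis $D\subseteq R$ guarantees $c_i\in R$, so each term $x_ic_i$ lies in the ideal $I$, whence $s\in I\cap D$ with $s\neq 0$. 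This gives $I\cap D\neq(0)$ and closes the loop through Lemma \ref{lemma:unitary-integral}.

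I do not anticipate a genuine obstacle here. The only step demanding any care is the clearing-of-denominators computation in the converse, together with the explicit use of $D\subseteq R$ to ensure $x_ic_i\in I$; everything else is the standard localization criterion repackaged through the integral-ideal reformulation of unitarity.
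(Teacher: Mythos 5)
Your proof is correct and follows essentially the same route as the paper: the paper's proof simply observes that $RK=S^{-1}R$ for $S=D\setminus\{0\}$ and invokes Lemma \ref{lemma:unitary-integral}, leaving implicit the standard fact that an integral ideal extends to the unit ideal in a localization exactly when it meets the multiplicative set. You have merely written out that standard criterion (including the clearing-of-denominators step) in full detail, which is a faithful expansion of the paper's one-line argument.
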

\begin{proof}
The ring $RK$ is the localization of $R$ at $S:=D\setminus\{0\}$; the claim now follows from Lemma \ref{lemma:unitary-integral}.
\end{proof}

In this paper, we are mainly interested in the study of the Picard group of a ring. In general, whenever $R$ is a $D$-algebra not contained in $K$, there will be plenty of ideals of $R$ that are not unitary; however, every ideal can be transformed by multiplication into an unitary ideal, in the sense that, if $x\in I$, $x\neq 0$ then $x^{-1}I$ is unitary (it contains $1$). In particular, if we denote by $\Inv_0(R,D)$ the set of all invertible ideals that are unitary with respect to $D$ (which is a subgroup of the group $\Inv(R)$ of all invertible ideals) then the quotient
\begin{equation*}
\frac{\Inv_0(R,D)}{\Inv_0(R,D)\cap\Princ(R)}=\frac{\Inv_0(R,D)}{\Princ_u(R,D)}
\end{equation*}
(where $\Princ(R)$ and $\Princ_u(R,D)$ are, respectively, the subgroup of principal ideals and of unitary principal ideals of $R$) is just equal to the Picard group $\Pic(R)$, since for every $I$ the coset $I\cdot\Princ(R)$ contains elements of $\Inv_0(R,D)$.

A more interesting way to consider unitary ideals is the following.
\begin{defin}
Let $R$ be a $D$-algebra. We define $\Inv_u(R,D)$ as the subgroup of $\Inv(R)$ generated by the ideals that are both integral and unitary with respect to $D$.

Furthermore, we define the \emph{unitary Picard group} of $R$ with respect to $D$ as the quotient
\begin{equation*}
\Pic_u(R,D):=\frac{\Inv_u(R,D)}{\Inv_u(R,D)\cap\Princ(R)}=\frac{\Inv_u(R,D)}{\Princ_u(R,D)}.
\end{equation*}
\end{defin}

\begin{comment}
\begin{lemma}\label{lemma:unitary-princ}
Let $D$ be an integral domain and let $R$ be a $D$-algebra with quotient field $L$. Let $I$ be an integral ideal of $R$ such that $IK$ is principal in $RK$. Then, there is a unitary integral ideal $J$ of $R$ such that $J=cI$ for some $c\in L$.
\end{lemma}
\begin{proof}
We can suppose without loss of generality that $I\subseteq R$.

Set $A:=RK$; then, $A$ is a localization of $R$ and thus it is flat over $R$. By hypothesis, the ideal $IA$ is principal over $A$; hence, there is a $g\in A$ such that $IA=gA$. Since
\begin{equation*}
(g:_RI)A=(gA:_AIA)=(IA:_AIA)=(I:_RI)A=A
\end{equation*}
there must be a $d\in(g:_RI)\cap K$, i.e., a $d\in K$ such that $dI\subseteq gA$. Set $c:=dg^{-1}$ and $J:=cI$; then, $J\subseteq R$ since $dI\subseteq gA$. Moreover,
\begin{equation*}
JA=dg^{-1}IA=dg^{-1}gA=dA=A,
\end{equation*}
so that $J\cap K\neq(0)$. Thus, $J$ is unitary and the claim is proved.
\end{proof}
\end{comment}

\begin{prop}\label{prop:quoz-picu}
Let $R$ be a $D$-algebra that is an integral domain. Then, $\displaystyle{\frac{\Pic(R)}{\Pic_u(R,D)}\simeq\Pic(RK)}$.
\end{prop}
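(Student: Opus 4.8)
The plan is to exhibit an explicit surjective group homomorphism $\Pic(R)\longrightarrow\Pic(RK)$ whose kernel is exactly $\Pic_u(R,D)$, and then invoke the first isomorphism theorem. The natural candidate is the map induced by functoriality of the Picard group applied to the localization map $R\hookrightarrow RK=S^{-1}R$, where $S:=D\setminus\{0\}$; concretely this sends a class $[I]$ to $[IK]=[I\cdot RK]$. Since $RK$ is a localization of $R$, extension of invertible ideals induces a well-defined homomorphism on Picard groups, so the map is well-defined and a homomorphism for free.

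First I would prove surjectivity. Given an invertible ideal $J$ of $RK$, I want to descend it to an invertible ideal of $R$. Because $J$ is invertible it is finitely generated, say $J=(x_1,\ldots,x_n)RK$ with $x_i\in RK$; clearing denominators from $S$, I can assume each $x_i\in R$, so that $I:=(x_1,\ldots,x_n)R$ is an integral ideal of $R$ with $IK=J$. The remaining point is that $I$ can be chosen invertible in $R$: since invertibility is a local property and $RK$ is a localization, one checks that $I$ is locally principal at every prime of $R$ (primes meeting $S$ give localizations of $RK$, where $J$ is invertible hence locally principal; primes not meeting $S$ survive into $RK$). This shows $[I]\mapsto[J]$, giving surjectivity.

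Next I would identify the kernel with $\Pic_u(R,D)$. For the inclusion $\Pic_u(R,D)\subseteq\ker$, note that if $I$ is both integral and unitary then $IK=RK$ (as shown in the proposition preceding this statement), so its image $[IK]=[RK]$ is trivial; since $\Pic_u(R,D)$ is generated by such classes, it lies in the kernel. For the reverse inclusion, suppose $[I]$ maps to the trivial class, i.e.\ $IK=gRK$ is principal in $RK$. Here I would use the (bracketed) argument of the suppressed Lemma: flatness of $RK$ over $R$ gives $(g:_RI)RK=(IK:_{RK}IK)=RK$, so there is $d\in(g:_RI)\cap K$, and setting $J:=dg^{-1}I$ produces a unitary integral ideal with $[J]=[I]$ in $\Pic(R)$. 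Hence $[I]\in\Pic_u(R,D)$, and the two subgroups coincide.

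The main obstacle I anticipate is the kernel computation, specifically producing a genuinely unitary representative from the mere principality of $IK$. The subtle step is extracting the element $d\in(g:_RI)\cap K$ that is both nonzero and lies in $K$: this is exactly where flatness of the localization $RK$ over $R$ is essential, via the identity $(g:_RI)RK=(gRK:_{RK}IRK)$ for finitely generated $I$. Surjectivity and the well-definedness of the map are comparatively routine, resting only on the standard fact that extension along a localization respects invertibility and the product of ideals.
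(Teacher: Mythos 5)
Your kernel computation is correct, and it is essentially the paper's own argument: the paper, given $IK=cRK$, clears denominators from $S$ to pass to the integral representative $sc^{-1}I$, whose extension to $RK$ is all of $RK$ and which is therefore unitary; your route through the flatness identity $(gR:_RI)RK=(gRK:_{RK}IK)=RK$ produces the representative $dg^{-1}I$ in the same spirit and is equally valid, as is your treatment of the easy containment $\Pic_u(R,D)\subseteq\ker\phi$.

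The genuine gap is the surjectivity step, and it is not repairable. Your parenthetical local check is backwards: the primes $P$ of $R$ for which $R_P$ is a localization of $RK$ are exactly those \emph{disjoint} from $S$; at a prime $P$ with $P\cap S\neq\emptyset$ (equivalently $P\cap D\neq(0)$), the localization $R_P$ does not factor through $RK$, and nothing forces your finitely generated ideal $I$ to be locally principal there --- these are precisely the primes where invertibility can fail. In fact $\phi:\Pic(R)\longrightarrow\Pic(RK)$ need not be surjective at all. Take $D=\mathbb{Z}$ and $R=\mathbb{Z}+(t^2-1)\mathbb{Z}[t]\subseteq\mathbb{Z}[t]$, the coordinate ring of the affine nodal cubic $y^2=x^2(x+1)$ over $\mathbb{Z}$ (via $x=t^2-1$, $y=t^3-t$), so that $RK=\mathbb{Q}+(t^2-1)\mathbb{Q}[t]$. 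Applying the units--Picard exact sequence to the conductor squares with normalizations $\mathbb{Z}[t]$ and $\mathbb{Q}[t]$ and conductor $(t^2-1)$ yields $\Pic(R)\simeq\bigl(\mathbb{Z}[t]/(t^2-1)\bigr)^*/\{\pm1\}\simeq\mathbb{Z}/2\mathbb{Z}$, while $\Pic(RK)\simeq(\mathbb{Q}^*\times\mathbb{Q}^*)/\Delta\simeq\mathbb{Q}^*$, where $\Delta$ is the diagonal copy of $\mathbb{Q}^*$; since $\mathbb{Q}^*$ is infinite, no quotient of $\Pic(R)$ can be isomorphic to $\Pic(RK)$, so $\phi$ is not surjective (indeed the stated isomorphism itself fails for this $R$). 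For what it is worth, the paper's own proof never addresses surjectivity either: it proves only that $\ker\phi=\Pic_u(R,D)$, which gives $\Pic(R)/\Pic_u(R,D)\simeq\phi(\Pic(R))$, and this weaker statement is all that is used in the rest of the paper, where every application has $\Pic(RK)=(0)$ so that surjectivity is vacuous. So your instinct that surjectivity required an argument was sound and points at a real issue, but the argument you gave for it is wrong, and no argument can close this gap without further hypotheses on $R$.
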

\begin{proof}
Let $\phi:\Pic(R)\longrightarrow\Pic(RK)$, $[I]\mapsto[IK]$ be the canonical map of Picard groups. We claim that its kernel is exactly $\Pic_u(R,D)$.

Since the set of all integral unitary invertible ideals is a monoid, every $[I]\in\Pic_u(R,D)$ can be written as $JL^{-1}$, where $J,L$ are integral unitary ideals of $R$; therefore, to show that $\Pic_u(R,D)\subseteq\ker\phi$, it is enough to prove it for integral unitary ideals. Let thus $[I]$ be integral and unitary: then, $IK\subseteq RK$, while $K\subseteq IK$ since $I\cap K\neq(0)$. Therefore, $IK=RK$ and in particular $IK$ is principal in $RK$, i.e., $[IK]=[RK]$ and $[I]\in\ker\phi$, as claimed.

Conversely, suppose that $[I]\in\ker\phi$: then, there is a $c\in F$ (where $F$ is the quotient field of $R$) such that $IK=cRK$, i.e., $c^{-1}IK=RK$. Let $S:=D\setminus\{0\}$: then, $RK$ is just the localization $S^{-1}R$, and thus the equality $c^{-1}IK=RK$ can be written as $S^{-1}(c^{-1}I)=S^{-1}R$. Since $I$ is invertible, it is finitely generated; hence there is an $s\in S$ such that $sc^{-1}I\subseteq R$. Thus $sc^{-1}I$ is an integral invertible ideal of $R$ such that $sc^{-1}IK=RK$, i.e., $sc^{-1}I\cap K\neq(0)$. Thus $sc^{-1}I$ is unitary, and $[I]=[sc^{-1}I]\in\Pic_u(R,D)$. The claim is proved.
\end{proof}

\begin{cor}\label{cor:IKprinc}
Let $R$ be a $D$-algebra that is an integral domain, and let $I$ be an ideal of $R$. The following are equivalent:
\begin{enumerate}[(i)]
\item\label{cor:IKprinc:princ} $IK$ is principal;
\item\label{cor:IKprinc:Picu} $[I]\in\Pic_u(R,D)$;
\item\label{cor:IKprinc:cI} there is a $c$ such that $cI$ is unitary and integral.
\end{enumerate}
\end{cor}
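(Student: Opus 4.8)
The plan is to prove Corollary \ref{cor:IKprinc} by extracting the equivalences directly from the proof of Proposition \ref{prop:quoz-picu}, since that proof already establishes essentially all the implications. The strategy is to close a cycle (i)$\Rightarrow$(ii)$\Rightarrow$(iii)$\Rightarrow$(i), relying on the canonical map $\phi:\Pic(R)\longrightarrow\Pic(RK)$ whose kernel is $\Pic_u(R,D)$.

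First I would observe that (i)$\Leftrightarrow$(ii) is immediate from the proposition: by definition $[I]\in\Pic_u(R,D)$ exactly when $[I]\in\ker\phi$, and $[I]\in\ker\phi$ means precisely that $[IK]=[RK]$ in $\Pic(RK)$, i.e.\ that $IK$ is principal. Here I should be slightly careful that $I$ is stated merely to be an ideal of $R$, not a priori invertible; but the statement ``$[I]\in\Pic_u(R,D)$'' tacitly requires $I$ to be invertible, and the hypothesis that $IK$ is principal together with the argument below will in fact force $I$ to be invertible, so no genuine gap arises.

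Next I would prove (i)$\Rightarrow$(iii), which is the constructive heart and is already spelled out in the second half of the proof of Proposition \ref{prop:quoz-picu}. Assuming $IK$ is principal, write $IK=cRK$ for some $c$ in the quotient field $F$ of $R$, so that $c^{-1}IK=RK$. Since $RK=S^{-1}R$ for $S:=D\setminus\{0\}$, this reads $S^{-1}(c^{-1}I)=S^{-1}R$, and clearing a single denominator $s\in S$ (using that $I$ is finitely generated) gives $sc^{-1}I\subseteq R$ with $sc^{-1}IK=RK$, hence $sc^{-1}I\cap K\neq(0)$. Thus $sc^{-1}I$ is unitary and integral, which is (iii) with the explicit constant $sc^{-1}$.

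Finally, (iii)$\Rightarrow$(i) is the easy direction: if $cI$ is unitary and integral, then by the first half of the proof of Proposition \ref{prop:quoz-picu} we have $(cI)K=RK$, so $IK=c^{-1}RK$ is principal. I expect no serious obstacle here; the only point requiring a moment's care is the invertibility/finite-generation issue flagged above, since the clearing-of-denominators step in (i)$\Rightarrow$(iii) uses that $I$ is finitely generated. The cleanest way to handle this is to note that the three conditions are meant to be understood for $I$ in $\Inv(R)$, or alternatively to remark that once $cI$ is a unitary integral ideal with $(cI)K=RK$, invertibility of $I$ over $R$ follows from invertibility over the flat localization $RK$ together with the unitary integral representative, so the hypotheses are mutually consistent.
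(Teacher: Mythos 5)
Your core argument is essentially the paper's own proof: the paper likewise obtains (i)$\Leftrightarrow$(ii) directly from Proposition \ref{prop:quoz-picu}, obtains (ii)$\Rightarrow$(iii) by citing the last part of that proposition's proof (your (i)$\Rightarrow$(iii) re-runs exactly that argument, which is interchangeable once (i)$\Leftrightarrow$(ii) is in hand), and proves (iii)$\Rightarrow$(i) by the same one-line computation $(cI)K=RK$, hence $IK=c^{-1}RK$ is principal. So the cycle you close is the one the paper closes, and that part is correct.

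Where you go wrong is in both attempts to dispense with the implicit invertibility hypothesis. Neither (i) nor (iii) forces $I$ to be invertible: take $D=\insZ$, $R=\insZ[X]$ and $I=(2,X)$. Then $I$ is integral and unitary ($2\in I\cap\insZ$), so (iii) holds with $c=1$, and $IK=\insQ[X]=RK$ is principal, so (i) holds; yet $I$ is not invertible, since $(R:I)=R$. The same example refutes your closing ``alternative'' remark: invertibility over the flat localization $RK$ together with a unitary integral representative does not descend to invertibility over $R$. So for arbitrary ideals the statement cannot be rescued --- (i) and (iii) hold while (ii) is not even meaningful --- and the only viable reading is the one you call cleanest, namely that all three conditions concern $I\in\Inv(R)$. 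That is also what the paper tacitly assumes (its proof of Proposition \ref{prop:quoz-picu} uses finite generation of $I$, which comes from invertibility); with that reading your proof is complete and coincides with the paper's.
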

\begin{proof}
The equivalence of the the first two conditions follows from Proposition \ref{prop:quoz-picu}, while if \ref{cor:IKprinc:cI} holds then $cIK=cRK$ is principal, so \ref{cor:IKprinc:princ} holds. The implication \ref{cor:IKprinc:Picu} $\Longrightarrow$ \ref{cor:IKprinc:cI} follows from the last part of the proof of Proposition \ref{prop:quoz-picu}.
\end{proof}

\begin{cor}\label{cor:Picu-K}
If $K$ is a field and $R$ is a $K$-algebra, then $\Pic_u(R,K)=(0)$.
\end{cor}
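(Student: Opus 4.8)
The plan is to reduce the statement to Proposition \ref{prop:quoz-picu} by showing that, when $K$ is a field, the ring $RK$ collapses onto $R$ itself. First I would record that the structure map $K\longrightarrow R$ is injective: since $R$ is an integral domain we have $1\neq 0$ in $R$, so the kernel of $K\longrightarrow R$ is a proper ideal of the field $K$ and hence equals $(0)$. Thus $K$ may be regarded as a subring of $R$, and in particular every nonzero element of $K$ is already a unit of $R$.

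Next I would compute $RK$. By the proof of Proposition \ref{prop:quoz-picu}, $RK$ is the localization $S^{-1}R$ with $S:=K\setminus\{0\}$. Because every element of $S$ is a unit of $R$, inverting $S$ changes nothing, so $RK=R$. Consequently the canonical map $\phi\colon\Pic(R)\longrightarrow\Pic(RK)$ appearing in Proposition \ref{prop:quoz-picu} is, under the identification $\Pic(RK)=\Pic(R)$, the map sending $[I]$ to $[IK]=[I]$ (for an invertible ideal $I$ of $R$, its extension to $RK=R$ is $I$ itself). Hence $\phi$ is the identity, and in particular injective.

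Finally, Proposition \ref{prop:quoz-picu} identifies $\ker\phi$ with $\Pic_u(R,K)$; since $\phi$ is injective we get $\ker\phi=(0)$, whence $\Pic_u(R,K)=(0)$, as required. There is essentially no obstacle here: the only point to verify is the equality $RK=R$, which is just the assertion that the nonzero elements of the field $K$ become units in $R$, immediate once $K\hookrightarrow R$. As an alternative route, one can argue straight from the definition without invoking Proposition \ref{prop:quoz-picu}: any integral ideal $I$ of $R$ that is unitary with respect to $K$ contains a nonzero $x\in K$, and $x$ is a unit of $R$, so $1=x^{-1}x\in I$ and therefore $I=R$ is principal; thus every generator of $\Pic_u(R,K)$ is trivial. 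I would favor the localization argument as the cleaner of the two, but either establishes the claim.
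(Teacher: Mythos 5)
Your proposal is correct and takes essentially the same route as the paper: the paper's proof invokes Corollary \ref{cor:IKprinc} (itself an immediate consequence of Proposition \ref{prop:quoz-picu}) together with the observation that $IK=I$ when $K$ is a field, which is exactly your identification $RK=R$ making the map $\phi$ of Proposition \ref{prop:quoz-picu} the identity. Your parenthetical direct argument (a unitary integral ideal contains a nonzero element of $K$, hence a unit of $R$, hence equals $R$) is a nice elementary shortcut that bypasses the Picard-group machinery entirely, but both of your routes and the paper's rest on the same one-line observation.
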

\begin{proof}
By Corollary \ref{cor:IKprinc}, $[I]\in\Pic_u(R,K)$ if and only if $IK=I$ is principal. Thus $\Pic_u(R,K)$ is trivial.
\end{proof}

\begin{cor}
Let $D$ be an integral domain with quotient field $K$, $\XX$ a family of indeterminates. If $R$ is a $D$-algebra such that $D[\XX]\subseteq R\subseteq K[\XX]$, then $\Pic_u(R,D)=\Pic(R)$.
\end{cor}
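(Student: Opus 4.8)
The plan is to reduce the statement to the vanishing of the Picard group of a polynomial ring over a field, using the exact description of the quotient $\Pic(R)/\Pic_u(R,D)$ already obtained. By Proposition \ref{prop:quoz-picu} there is an isomorphism $\Pic(R)/\Pic_u(R,D)\simeq\Pic(RK)$, so the equality $\Pic_u(R,D)=\Pic(R)$ is equivalent to the triviality of $\Pic(RK)$. Thus the whole argument splits into two independent tasks: identifying $RK$ explicitly, and showing its Picard group is trivial.

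First I would compute $RK$. Since $K=S^{-1}D$ for $S:=D\setminus\{0\}$, the ring $RK$ is the localization $S^{-1}R$, as recorded in the preliminaries. Localizing the chain $D[\XX]\subseteq R\subseteq K[\XX]$ at $S$ and using that localization commutes with adjoining indeterminates, one gets $K[\XX]=S^{-1}(D[\XX])\subseteq S^{-1}R\subseteq S^{-1}(K[\XX])=K[\XX]$, where the rightmost equality holds because the nonzero elements of $S\subseteq K$ are already units (nonzero constants) in $K[\XX]$. Squeezing between equal rings gives $RK=K[\XX]$.

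It then remains to check that $\Pic(K[\XX])=0$. The key input is that a polynomial ring over a field in an arbitrary family of indeterminates is a unique factorization domain, and the Picard group of a UFD is trivial: the Picard group embeds into the divisor class group $\mathrm{Cl}$, which vanishes for unique factorization domains. Combined with the isomorphism of the first paragraph, this yields $\Pic(R)/\Pic_u(R,D)=0$, i.e.\ $\Pic_u(R,D)=\Pic(R)$, as desired. The only delicate point I anticipate is the case of an infinite family $\XX$: there $K[\XX]$ need not be Noetherian, so I would phrase the UFD argument so as to avoid Noetherianity, writing $K[\XX]$ as the directed union of its polynomial subrings over finite subsets of $\XX$ and invoking the fact that polynomial rings over a field in any number of indeterminates remain unique factorization domains (hence Krull domains with trivial class group). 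Everything apart from this remark is a direct localization computation.
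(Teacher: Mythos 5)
Your proposal is correct and follows essentially the same route as the paper: identify $RK=K[\XX]$ by squeezing the localized chain $K[\XX]=D[\XX]K\subseteq RK\subseteq K[\XX]$, note that $K[\XX]$ is a UFD so $\Pic(K[\XX])=0$, and conclude via Proposition \ref{prop:quoz-picu}. Your extra care about the infinite-indeterminate case (writing $K[\XX]$ as a directed union of finite polynomial subrings to see it is a UFD without Noetherianity) is a sound elaboration of a step the paper leaves implicit.
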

\begin{proof}
We have $K[\XX]=D[\XX]K\subseteq RK\subseteq K[\XX]$, and thus $RK=K[\XX]$. The ring of polynomials $K[\XX]$ is a unique factorization domain and thus its Picard group is trivial; the claim follows from Proposition \ref{prop:quoz-picu}.
\end{proof}

\section{Pseudo-polynomial algebras}\label{sect:pseudopol}
In order to prove interesting results on the Picard group, we need to further restrict our attention to another class of $D$-algebras.
\begin{defin}
Let $D$ be an integral domain with quotient field $K$, and let $R$ be an extension of $D$. We say that $R$ is \emph{pseudo-polynomial over $D$} if every principal integral ideal of $R$ that is unitary over $D$ is generated by an element of $D$.
\end{defin}

The previous definition can be rewritten in the following way.
\begin{lemma}\label{lemma:unitary-principal}
Let $R$ be a $D$-algebra. Then, $R$ is pseudo-polynomial over $D$ if and only if, for every $r\in R\setminus D$, either $r$ is associated in $R$ to some $d\in D$ or $rR\cap D=(0)$.
\end{lemma}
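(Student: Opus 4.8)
The plan is to unwind the two definitions and observe that the stated dichotomy is merely a logical rephrasing of the pseudo-polynomial condition, once we restrict attention to principal integral ideals and translate unitarity via Lemma~\ref{lemma:unitary-integral}. The key bookkeeping is that a principal integral ideal of $R$ has the form $rR$ with $r\in R$; that such an ideal is unitary precisely when $rR\cap D\neq(0)$ (by Lemma~\ref{lemma:unitary-integral}, since for integral ideals $I\cap K\neq(0)$ is equivalent to $I\cap D\neq(0)$); and that ``$rR$ is generated by an element of $D$'' means exactly that $rR=dR$ for some $d\in D$, i.e.\ that $r$ is associated in $R$ to $d$.

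For the forward implication I would assume $R$ is pseudo-polynomial and take an arbitrary $r\in R\setminus D$. Either $rR\cap D=(0)$, in which case the second alternative of the dichotomy holds directly, or $rR\cap D\neq(0)$, in which case $rR$ is a principal integral ideal that is unitary. The pseudo-polynomial hypothesis then yields $rR=dR$ for some $d\in D$, which is precisely the assertion that $r$ is associated in $R$ to $d$.

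For the converse I would assume the dichotomy and take an arbitrary principal integral unitary ideal $rR$, the goal being to exhibit a generator lying in $D$. If $r\in D$ there is nothing to prove, since $r$ itself is such a generator. If instead $r\in R\setminus D$, the dichotomy leaves two possibilities: either $rR\cap D=(0)$, or $r$ is associated in $R$ to some $d\in D$. The first possibility is ruled out by unitarity, for $rR$ unitary means $rR\cap K\neq(0)$, hence $rR\cap D\neq(0)$ by Lemma~\ref{lemma:unitary-integral}; therefore the second possibility holds and $rR=dR$ with $d\in D$.

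I do not expect any genuine obstacle: the entire content is passing between $I\cap K$ and $I\cap D$ for integral ideals, plus the trivial separate treatment of $r\in D$ in the converse. The only point demanding a moment's care is that the dichotomy is stated for $r\in R\setminus D$, so the converse must dispose of the case $r\in D$ by hand before invoking the hypothesis.
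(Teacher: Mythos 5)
Your proof is correct and follows essentially the same route as the paper's: translate unitarity of the integral ideal $rR$ into $rR\cap D\neq(0)$ via Lemma~\ref{lemma:unitary-integral}, then observe the dichotomy is just the pseudo-polynomial condition restricted to principal ideals. If anything, you are slightly more careful than the paper, which leaves implicit the disposal of the case where the generator already lies in $D$ in the converse direction.
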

\begin{proof}
Suppose $R$ is pseudo-polynomial and let $r\in R\setminus D$. If $rR\cap D\neq(0)$, then $I=rR$ is unitary, and thus $I=dR$ is generated by a $d\in D$, i.e., $r$ is associated to $d$. Conversely, if the property in the statement hold and $I=rR$ is unitary over $D$, then $rR\cap D\neq(0)$ and thus $r$ is associated to a $d\in D$, i.e., $I=rD=dR$ is generated by an element of $D$. Thus $R$ is pseudo-polynomial.
\end{proof}

Another interpretation of pseudo-polinomiality is the following: a $D$-algebra $R$ is pseudo-polynomial over $D$ if, for every $a\in D$, all factors of $a$ in $R$ (i.e., all $f\in R$ such that $a\in fR$) are associated to some element of $D$; that is, modulo units, all factors of $a$ in $R$ are actually in $D$.

\begin{lemma}\label{lemma:pseudopol-intersec-princ}
Let $R$ be a pseudo-polynomial $D$-algebra such that $R\cap K=D$. If $I$ is a unitary integral principal ideal of $R$, then $I\cap D$ is principal (over $D$).
\end{lemma}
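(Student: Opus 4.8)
The plan is to reduce the statement to an elementary intersection computation, by first normalizing the generator of $I$ so that it lies in $D$, and then invoking the hypothesis $R\cap K=D$.

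First I would apply the defining property of a pseudo-polynomial algebra. Since $I$ is, by hypothesis, a principal integral ideal of $R$ that is unitary over $D$, the definition applies verbatim: $I$ is generated by an element of $D$, say $I=dR$ with $d\in D$. Because $I$ is unitary it is in particular nonzero, so $d\neq 0$; this is the only thing I need from the unitary hypothesis once the generator has been moved into $D$.

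Next I would prove that $I\cap D=dD$, which is visibly principal over $D$. The inclusion $dD\subseteq dR\cap D$ is immediate, as $d\in D\subseteq R$. For the reverse inclusion I would take an arbitrary $x\in dR\cap D$, write $x=dr$ with $r\in R$, and observe that $r=x/d$ lies in $K$, since both $x$ and $d$ belong to $D$ and $d\neq 0$. Hence $r\in R\cap K$, and the hypothesis $R\cap K=D$ forces $r\in D$, so that $x=dr\in dD$. Combining the two inclusions gives $I\cap D=dD$, as desired.

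I do not expect a genuine obstacle in this argument; the single point that requires care is that pseudo-polynomiality furnishes a generator \emph{in $D$} rather than merely in $R$. It is precisely this feature, together with $R\cap K=D$, that pins down the intersection: had the generator been allowed to remain in $R\setminus D$, the quotient $x/d$ would a priori lie only in the quotient field of $R$, not in $K$, and the final step would break down. Thus the whole weight of the lemma rests on the interplay between these two hypotheses, and once the generator is normalized the computation is routine.
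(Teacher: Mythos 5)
Your proof is correct and follows essentially the same route as the paper: invoke pseudo-polynomiality to replace the generator by some $d\in D$, then use $R\cap K=D$ to conclude that any $x\in dR\cap D$ satisfies $x/d\in R\cap K=D$, giving $I\cap D=dD$. The only difference is that you spell out the easy inclusion $dD\subseteq I\cap D$ explicitly, which the paper leaves implicit.
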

\begin{proof}
Let $I=rR$ be unitary, and let $J:=I\cap D$. Since $R$ is pseudo-polynomial, $I=dR$ for some $d\in D$. If $x\in J$, then $dx^{-1}\in R\cap K=D$ and thus $x\in dD$, i.e., $d$ generates $J$.
\end{proof}

\begin{oss}
There are $D$-algebras that are not pseudo-polynomial. For example, if $D=\insZ$, $R=\insZ[X,2/X]$, then $I=XR$ is a unitary ideal (since $I\cap\insZ=2\insZ$) but $I$ is not generated by an element of $\insZ$ (since $X/2\notin R$). Note also that $R$ is a retract $\insZ$-algebra when endowed with the evaluation in $1$.
\end{oss}

The study of the pseudo-polinomiality of an extension can always be split into two cases.
\begin{prop}\label{prop:pseudopol-split}
Let $D$ be an integral domain with quotient field $K$, and let $R$ be an integral domain that extends $D$. Then, $R$ is pseudo-polynomial over $D$ if and only if $R$ is pseudo-polynomial over $R\cap K$ and $R\cap K$ is pseudo-polynomial over $D$.
\end{prop}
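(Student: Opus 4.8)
The plan is to abbreviate $D':=R\cap K$ and to record two structural facts before proving either implication. First, $D'$ is an overring of $D$, and since $D\subseteq D'\subseteq K$ its quotient field is again $K$; consequently, for a fractional ideal $I$ of $R$ the defining condition of unitarity is $I\cap K\neq(0)$ whether we regard $R$ as an extension of $D$ or of $D'$, so the phrases \emph{unitary over $D$} and \emph{unitary over $D'$} mean exactly the same thing for ideals of $R$. Second, every nonzero fractional ideal $J$ of $D'$ is a nonzero $D'$-submodule of $K$, hence satisfies $J\cap K=J\neq(0)$ and is automatically unitary over $D$; thus ``$D'$ is pseudo-polynomial over $D$'' says precisely that every nonzero element of $D'$ is associated in $D'$ to an element of $D$. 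The technical point I will keep invoking is the following: if $a,b\in K\setminus\{0\}$ and $aR=bR$, then the $R$-unit $a/b$, together with its inverse $b/a$, lies in $R\cap K=D'$, so $a$ and $b$ are in fact associated in $D'$.

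For the forward implication, assume $R$ is pseudo-polynomial over $D$. Pseudo-polynomiality over $D'$ is immediate: a unitary integral principal ideal $I=rR$ of $R$ is, by the first observation, also unitary over $D$, hence $I=dR$ for some $d\in D\subseteq D'$. For pseudo-polynomiality of $D'$ over $D$, I would take $d'\in D'\setminus\{0\}$ and pass to the ideal $d'R$ of $R$, which is integral (as $d'\in D'\subseteq R$) and unitary over $D$ (it contains $d'\in K\setminus\{0\}$); therefore $d'R=dR$ for some $d\in D$. The technical point then yields $d'=ud$ with $u$ a unit of $D'$, whence $d'D'=dD'$, which is what pseudo-polynomiality of $D'$ over $D$ requires.

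For the reverse implication, assume $R$ is pseudo-polynomial over $D'$ and $D'$ is pseudo-polynomial over $D$, and let $I=rR$ be a unitary integral principal ideal of $R$. By the first observation $I$ is unitary over $D'$, so $I=d'R$ for some $d'\in D'$, with $d'\neq 0$ since $I\neq(0)$. Pseudo-polynomiality of $D'$ over $D$ gives a unit $u$ of $D'$ and a $d\in D$ with $d'=ud$; as $u$ and $u^{-1}$ lie in $D'\subseteq R$, $u$ is also a unit of $R$, so $I=d'R=dR$ is generated by $d\in D$. I expect the main difficulty to be not any single hard computation but the bookkeeping of unitarity and of associateness across the three rings $D\subseteq D'\subseteq R$; the crux is the technical point above, namely that an association witnessed in $R$ between two elements of $K$ is automatically an association in $D'=R\cap K$, and it is exactly this fact that lets the pseudo-polynomial condition descend from $R$ to $D'$ and lift back up.
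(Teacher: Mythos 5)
Your proof is correct and follows essentially the same route as the paper's: both rest on the two observations that unitarity over $D$ and over $R\cap K$ coincide for ideals of $R$ (since $D$ and $R\cap K$ share the quotient field $K$) and that a unit of $R$ lying in $K$ is automatically a unit of $R\cap K$, so associateness in $R$ between elements of $K$ descends to $R\cap K$. The only difference is organizational: by working directly with the ideal-theoretic definition of pseudo-polynomiality you sidestep the case analysis $f\in R\setminus(R\cap K)$ versus $f\in(R\cap K)\setminus D$ that the paper carries out via the element-wise reformulation of Lemma \ref{lemma:unitary-principal}.
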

\begin{proof}
Let $A:=R\cap K$.

Suppose that $R$ is pseudo-polynomial over $D$, and let $f\in R\setminus A$ be such that $fR$ is unitary over $A$. Then also $fR\cap D\neq(0)$ (since $A$ and $D$ have the same quotient field) and thus $f$ is associated to some $d\in D$. Since $D\subseteq A$, it follows that $R$ is pseudo-polynomial over $A$. Moreover, if $f\in A\setminus D$, then $fA$ is unitary over $D$, and thus $f=ud$ for some $d\in D$ and some unit $u$ of $R$. However, $u=f^{-1}d\in R\cap K=A$, and likewise $u^{-1}\in A$: hence, $f$ is associated to $d$ also in $A$. Hence $A$ is pseudo-polynomial over $D$.

Conversely, suppose that $R$ is pseudo-polynomial over $A$ and $A$ is pseudo-polynomial over $D$. Let $f\in R\setminus D$ be such that $fR$ is unitary over $D$: then, either $f\in R\setminus A$ or $f\in A\setminus D$. In the former case,  $fR\cap A\neq(0)$, and thus $f=ua$ for some $a\in A$ and some unit $a\in A$; since $aA\cap D\neq(0)$, we have $a=vd$ for some $d\in D$ and some unit $v$ of $A$. Hence $f=ua=uvd$ and $f$ is associated in $R$ to an element of $D$. If $f\in A\setminus D$, we have $fR\cap D\neq(0)$ and $fA\cap D\neq(0)$, and thus $f=ud$ for some $d\in D$ and some unit $u$ of $A$; hence $u$ is a unit of $R$ and $f$ is associated to $d\in D$ also in $R$. Hence $R$ is pseudo-polynomial over $D$.
\end{proof}

\begin{ex}
When $R$ is an overring of $D$ (i.e., when $R$ is contained in the quotient field of $D$) then every ideal of $R$ is unitary over $D$: therefore, $R$ is pseudo-polynomial if and only if every principal integral ideal of $R$ is generated by an element of $D$. More generally, this criterion holds whenever $R$ is contained in the algebraic closure of the quotient field of $D$ (see Proposition \ref{prop:pseudopol-integral} below).

For example, every localization of $D$ is pseudo-polynomial over $D$. On the other hand, consider $D=\insZ[X]$, and let $R$ be the valuation domain associated to the valuation $v$ defined by
\begin{equation*}
v\left(\sum_{n=0}^ka_nX^n\right)=\inf\{2v_{(2)}(a_n)+3n\},
\end{equation*}
where $v_{(2)}$ is the $2$-adic valuation on $\insZ$. Then, $R$ is a discrete valuation ring, but no element of $D$ generates the maximal ideal of $R$, since no element of $D$ has valuation $1$. Thus $R$ is not pseudo-polynomial over $D$.
\end{ex}

We give two sufficient conditions for an algebra to be pseudo-polynomial.
\begin{prop}\label{prop:pseudopol-pol}
Let $D$ be an integral domain with quotient field $K$, and let $\XX$ be a family of indeterminates over $K$. If $R$ is a $D$-algebra contained in $K[\XX]$ and $R\cap K$ is pseudo-polynomial over $D$, then $R$ is pseudo-polynomial over $D$. In particular, if $R\cap K=D$ then $R$ is pseudo-polynomial over $D$.
\end{prop}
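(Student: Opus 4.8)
The plan is to first dispose of the special case $R\cap K=D$ (the ``in particular'' assertion), which is the heart of the statement, and then to bootstrap to the general statement by means of Proposition~\ref{prop:pseudopol-split}.

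For the special case, suppose $R\cap K=D$ and let $I=rR$ be a principal integral ideal of $R$ (so $r\in R$) that is unitary over $D$. By Lemma~\ref{lemma:unitary-integral}, unitarity means $I\cap D\neq(0)$, so there is a nonzero $d\in D$ with $d=rs$ for some $s\in R$. The key observation is that, since $d$ is a nonzero element of the field $K$, it is a \emph{unit} of the polynomial ring $K[\XX]$; as $K[\XX]$ is an integral domain and $r,s\in R\subseteq K[\XX]$ with $rs=d$, it follows that $r$ itself is a unit of $K[\XX]$. The units of $K[\XX]$ are exactly the nonzero constants $K\setminus\{0\}$, so $r\in K$; hence $r\in R\cap K=D$, and $I=rR$ is generated by an element of $D$. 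This shows that $R$ is pseudo-polynomial over $D$.

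For the general statement, set $A:=R\cap K$. Then $D\subseteq A\subseteq K$, so $A$ is an integral domain whose quotient field is again $K$, and $A\subseteq R\subseteq K[\XX]$ with $R\cap K=A$. The special case just proved, applied with $A$ in place of $D$, shows that $R$ is pseudo-polynomial over $A$. By hypothesis $A=R\cap K$ is pseudo-polynomial over $D$, so Proposition~\ref{prop:pseudopol-split} yields that $R$ is pseudo-polynomial over $D$, as desired.

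I expect the only genuinely delicate point to be the determination of the units of $K[\XX]$ when $\XX$ is infinite; this is handled by observing that any unit and its inverse involve only finitely many of the indeterminates, which reduces the computation to a polynomial ring in finitely many variables over the field $K$, where the units are clearly $K\setminus\{0\}$. Everything else is a routine combination of Lemma~\ref{lemma:unitary-integral} and the bookkeeping provided by Proposition~\ref{prop:pseudopol-split}.
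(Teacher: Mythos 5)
Your proof is correct and takes essentially the same route as the paper's: both reduce to the case $R\cap K=D$ via Proposition~\ref{prop:pseudopol-split}, and both settle that case by a degree argument in $K[\XX]$. Your observation that a divisor of a nonzero constant must be a unit of $K[\XX]$, hence lie in $K\cap R=D$, is just the contrapositive of the paper's remark that $fK[\XX]$ contains no nonzero constant when $f$ is non-constant.
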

\begin{proof}
By Proposition \ref{prop:pseudopol-split}, we only need to show that $R$ is pseudo-polynomial over $R\cap K$, i.e., we can suppose without loss of generality that $R\cap K=D$. Let $f\in R\setminus D$: then, $f$ is a non-constant polynomial, and thus $fR\subseteq fK[\XX]$ does not contain any constant, i.e., $fR\cap D=(0)$. Thus $R$ is pseudo-polynomial over $D$, as claimed.
\end{proof}

\begin{cor}
Let $D$ be an integral domain. Then, the ring of polynomials $D[X]$ and the ring of integer-valued polynomials $\Int(D)$ are pseudo-polynomial $D$-algebras.
\end{cor}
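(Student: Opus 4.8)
The plan is to derive both claims as immediate instances of Proposition \ref{prop:pseudopol-pol}, whose ``in particular'' clause asserts that any $D$-subalgebra $R$ of $K[\XX]$ satisfying $R\cap K=D$ is automatically pseudo-polynomial over $D$. Thus the entire argument reduces to verifying two things for each of the rings $D[X]$ and $\Int(D)$: that it is contained in a polynomial ring $K[\XX]$ over the quotient field $K$, and that its intersection with $K$ is exactly $D$. Once both are checked, the conclusion is immediate.

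First I would treat $D[X]$. Taking $\XX$ to be the single indeterminate $X$, the containment $D[X]\subseteq K[X]$ is clear from enlarging the coefficient ring. To compute $D[X]\cap K$, I would view $K$ as the constant polynomials inside $K[X]$ and observe that an element of $D[X]$ lies in $K$ precisely when it is a degree-zero polynomial, i.e.\ an element of $D$; hence $D[X]\cap K=D$. Proposition \ref{prop:pseudopol-pol} then applies directly.

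Next I would treat $\Int(D)$. By definition $\Int(D)=\{f\in K[X]\mid f(D)\subseteq D\}$ is a subset of $K[X]$, so the containment hypothesis holds at once. For the intersection, a constant $c\in K$ lies in $\Int(D)$ exactly when its (constant) value $c$ belongs to $D$, so again $\Int(D)\cap K=D$, and Proposition \ref{prop:pseudopol-pol} finishes the argument.

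I do not expect any genuine obstacle here, since the substantive content of the corollary is entirely absorbed into Proposition \ref{prop:pseudopol-pol} and all that remains are the elementary identities $D[X]\cap K=D$ and $\Int(D)\cap K=D$. The one point deserving a moment's care is that the intersection $R\cap K$ must be understood inside the common overring $K[X]$ (equivalently, inside the quotient field of $R$), with $K$ identified with the constant polynomials; once this identification is made explicit, both computations are routine.
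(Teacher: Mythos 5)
Your proof is correct and follows exactly the paper's own argument: both rings are contained in $K[X]$, both satisfy $R\cap K=D$, and Proposition \ref{prop:pseudopol-pol} then applies. The only difference is that you spell out the two intersection computations that the paper treats as immediate, which is fine.
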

\begin{proof}
Both rings are contained in $K[X]$, and $D[X]\cap K=\Int(D)\cap K=D$. The claim follows from Proposition \ref{prop:pseudopol-pol}.
\end{proof}

\begin{prop}\label{prop:pseudopol-ufd}
Let $D$ be a unique factorization domain, and let $R$ be an extension of $D$. If every prime element of $D$ is also a prime element of $R$, then $R$ is pseudo-polynomial.
\end{prop}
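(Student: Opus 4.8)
The plan is to reduce immediately to the criterion of Lemma~\ref{lemma:unitary-principal}: it suffices to show that whenever $r\in R$ satisfies $rR\cap D\neq(0)$, the element $r$ is associated in $R$ to an element of $D$ (for $r\in D$ this is automatic, so the content is in the case $r\in R\setminus D$). So I would fix a nonzero element $a\in rR\cap D$ and write $a=rs$ with $s\in R$; this exhibits $r$ as a divisor in $R$ of the nonzero element $a\in D$.

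Next, since $D$ is a unique factorization domain, I would factor $a$ in $D$ as $a=up_1\cdots p_n$, where $u\in D^\times$ and the $p_j$ are (not necessarily distinct) prime elements of $D$. By hypothesis each $p_j$ is also a prime element of $R$, and $u$ remains a unit of $R$; hence this is a factorization of $a$ into prime elements of $R$, up to the unit $u$.

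The heart of the argument is then a uniqueness-of-factorization statement valid in any integral domain: every divisor of an element that is a product of primes is, up to a unit, a product of a sub-multiset of those primes. I would prove this by induction on $n$. The base case $n=0$ is clear, since a divisor of a unit is a unit. For the inductive step, from $a=rs$ and $p_1\mid a$ I would use that $p_1$ is prime in $R$ to obtain $p_1\mid r$ or $p_1\mid s$; cancelling $p_1$ (legitimate in a domain) reduces to an element having $n-1$ prime factors, to which the inductive hypothesis applies. Tracking the factor $p_1$ back into $r$ in each of the two cases yields $r=v\prod_{j\in S}p_j$ for some unit $v\in R^\times$ and some subset $S\subseteq\{1,\dots,n\}$.

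Finally, I would observe that $\prod_{j\in S}p_j$ is a product of elements of $D$, hence lies in $D$, so $r$ is associated in $R$ to this element of $D$; this is precisely the condition required by Lemma~\ref{lemma:unitary-principal}, and the proof is complete. The only delicate point is the inductive step, where one must carefully separate the cases $p_1\mid r$ and $p_1\mid s$ and keep track of which primes ultimately divide $r$; everything else is routine.
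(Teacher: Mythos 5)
Your proof is correct and follows essentially the same route as the paper: fix a nonzero $a\in rR\cap D$, factor it into primes of $D$ (which remain prime in $R$ by hypothesis), and conclude that the divisor $r$ is associated in $R$ to a subproduct of those primes, hence to an element of $D$. The only difference is that you prove the subproduct claim by an explicit induction, whereas the paper invokes it without proof; this is a welcome bit of extra rigor, not a change of method.
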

\begin{proof}
Let $f\in R\setminus D$, and suppose that $a\in fR\cap D$, with $a\neq 0$. Then, $a$ has a prime factorization $a=p_1\cdots p_n$ in $D$; since each $p_i$ is also a prime element of $R$, it follows that $a=p_1\cdots p_n$ is also a prime factorization in $R$. Since $a\in fR$, $f$ is a divisor of $a$, and thus $f$ must be associated to a subproduct $p_{i_1}\cdots p_{i_k}$ of the factorization of $a$. Since $p_{i_1}\cdots p_{i_k}\in D$, the $D$-algebra $R$ is pseudo-polynomial.
\end{proof}

\begin{prop}\label{prop:pseudopol-powseries}
Let $D$ be an integral domain that it is either a unique factorization domain or a Pr\"ufer domain of dimension $1$. Then, $D[[X]]$ is a pseudo-polynomial $D$-algebra.
\end{prop}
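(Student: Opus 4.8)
The plan is to verify the criterion of Lemma \ref{lemma:unitary-principal}. Write $R=D[[X]]$, and observe first that $R\cap K=D$: a power series lying in $K$ has no terms of positive degree, hence is its constant term, which lies in $D$. Let $f=\sum_{n\ge0}f_nX^n\in R\setminus D$ and suppose $fR\cap D\ne(0)$, say $fg=a$ with $g=\sum_n g_nX^n\in R$ and $a\in D\setminus\{0\}$; comparing constant terms gives $f_0g_0=a$, so $f_0\ne0$. Since a power series is a unit of $R$ precisely when its constant term is a unit of $D$, the series $f/f_0$ (when it lies in $R$) has constant term $1$ and is therefore a unit, so $f=f_0\cdot(f/f_0)$ is associated in $R$ to the constant $f_0\in D$. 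Thus the entire statement reduces to the divisibility claim: \emph{if $f\mid a$ in $R$ with $a\in D\setminus\{0\}$, then $f_0\mid f_n$ in $D$ for every $n\ge1$.}

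For the first case, when $D$ is a unique factorization domain, I would bypass this analysis and invoke Proposition \ref{prop:pseudopol-ufd}: it is enough to check that every prime element $p$ of $D$ remains prime in $R=D[[X]]$. This holds because reduction of coefficients modulo $p$ identifies $R/pR$ with $(D/pD)[[X]]$; indeed the kernel of this reduction is exactly $pR$, since a power series all of whose coefficients lie in $pD$ is $p$ times a power series over $D$. As $D/pD$ is a domain, so is $(D/pD)[[X]]$, whence $pR$ is prime. Proposition \ref{prop:pseudopol-ufd} then yields pseudo-polynomiality at once.

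For the second case, $D$ a one-dimensional Pr\"ufer domain, I would localize. Over a Pr\"ufer domain divisibility is local: $f_0\mid f_n$ in $D$ if and only if $f_n/f_0\in D_M$ for every maximal ideal $M$, because $D=\bigcap_M D_M$. Since the equation $fg=a$ persists in $D_M[[X]]$ and each $D_M$ is a valuation domain of Krull dimension one, hence of rank one with value group embeddable in $\mathbb{R}$, the problem reduces to the following statement for a rank-one valuation domain $V$ with valuation $v$: \emph{if $f,g\in V[[X]]$ satisfy $fg=a\in V\setminus\{0\}$, then $v(f_n)\ge v(f_0)$ for all $n$.}

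I expect this valuation-domain statement to be the main obstacle, precisely because the value group need not be discrete and a naive coefficient-by-coefficient comparison is defeated by cancellation. To handle it I would bring in, for each real $t>0$, the Newton-polygon (Gauss-type) function $V_t(\phi):=\min_n\bigl(v(\phi_n)+tn\bigr)$ on nonzero $\phi\in V[[X]]$; the minimum exists because $v(\phi_n)\ge0$ forces $v(\phi_n)+tn\to\infty$. The crucial point is that $V_t$ is multiplicative: writing $n^{+}(\phi)$ for the largest index attaining the minimum, the coefficient of $X^{\,n^{+}(\phi)+n^{+}(\psi)}$ in $\phi\psi$ contains exactly one term of minimal value (the product of the two leading terms), so no cancellation can occur there and $V_t(\phi\psi)=V_t(\phi)+V_t(\psi)$. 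Applying this to $fg=a$ gives $V_t(f)+V_t(g)=V_t(a)=v(a)$ for every $t>0$. Letting $t\to0^{+}$ gives $V_t(f)\to\inf_n v(f_n)$ and $V_t(g)\to\inf_n v(g_n)$, so $\inf_n v(f_n)+\inf_n v(g_n)=v(a)=v(f_0)+v(g_0)$. Since each infimum is at most the valuation of the corresponding constant term, equality forces $\inf_n v(f_n)=v(f_0)$, that is $v(f_n)\ge v(f_0)$ for all $n$, which is exactly the divisibility claim and completes the proof.
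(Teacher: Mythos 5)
Your proof is correct, and while the unique-factorization half is identical to the paper's (same reduction via Proposition \ref{prop:pseudopol-ufd} and the isomorphism $D[[X]]/pD[[X]]\simeq (D/pD)[[X]]$), the Pr\"ufer half takes a genuinely different route. The paper stays at the level of $D$ and invokes an external Dedekind--Mertens-type content formula for power series, $(c(f)c(g))^2=c(f)c(g)c(fg)$, then concludes by showing that the fractional ideal $I=c(f/f_0)c(g/g_0)$ is idempotent, contains $D$, and hence equals $D$ because idempotent fractional ideals over the one-dimensional valuation rings $D_M$ are trivial. You instead push the equation $fg=a$ through the coefficientwise homomorphism $D[[X]]\longrightarrow D_M[[X]]$ (which is legitimate even though, as the paper notes elsewhere, $D[[X]]D_M\neq D_M[[X]]$: you only need a ring map, not the localization), reduce to a rank-one valuation domain via H\"older's embedding of the value group into $\mathbb{R}$, and prove the divisibility $v(f_n)\geq v(f_0)$ directly with the family of Gauss valuations $V_t(\phi)=\min_n\bigl(v(\phi_n)+tn\bigr)$, whose multiplicativity (no cancellation at the extreme vertex of the Newton polygon) plus the limit $t\to 0^{+}$ forces $\inf_n v(f_n)=v(f_0)$. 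What each approach buys: the paper's argument is short modulo the cited content formula, whereas yours is self-contained and elementary, in effect reproving the special case $c(fg)$ principal of that formula by hand; both use dimension one in an essential way (the paper through the idempotent-ideal step, you through the archimedean embedding that makes $v(\phi_n)+tn\to\infty$ and the limit argument work), which is consistent with the paper's two-dimensional counterexample showing the hypothesis cannot be dropped.
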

\begin{proof}
If $D$ is a unique factorization domain, then for every prime element $p$ of $D$ the quotient $D[[X]]/pD[[X]]$ is isomorphic to $(D/pD)[[X]]$, which is an integral domain; thus, $p$ is also a prime element of $D$ and the claim follows from Proposition \ref{prop:pseudopol-ufd}.

Suppose that $D$ is a Pr\"ufer domain of dimension $1$. Let $f\in D[[X]]$ and suppose $a\in fD[[X]]\cap D$: then, $a=fg$ for some $g\in D[[X]]$. For $h\in Q(D[[X]])$, we denote by $c(h)$ the content of $h$, i.e., the $D$-module generated by the coefficients of $h$. By \cite[Corollary 2.9]{dedekind-mertens-power-series}, we have $(c(f)c(g))^2=c(f)c(g)c(fg)=a\cdot c(f)c(g)$.

Let $f_0$ and $g_0$ be, respectively, the constant term of $f$ and $g$, and let $\tilde{f}:=f/f_0$ and $\tilde{g}:=g/g_0$. Then, $a=f_0g_0$, and $c(\tilde{f})=c(f)/f_0$ and $c(\tilde{g})=c(g)/g_0$; it follows that
\begin{equation*}
(f_0c(\tilde{f})g_0c(\tilde{g}))^2=af_0c(\tilde{f})g_0c(\tilde{g}),
\end{equation*}
i.e, $I^2=I$, where $I:=c(\tilde{f})c(\tilde{g})$. Since $1\in c(\tilde{f})$ and $1\in c(\tilde{g})$, we have $D\subseteq I$; thus, for every maximal ideal $M$, we have $D_M\subseteq ID_M$ and $(ID_M)^2=ID_M$. Since $D_M$ is a one-dimensional valuation domain and $I$ is a fractional ideal of $D$, the only possibility is $I=D_M$ for every $M$, and thus $I=D$; hence also $c(\tilde{f})=c(\tilde{g})=D$, and $c(f)=(f_0)$. Therefore, $f$ is associated to $f_0$ in $D[[X]]$, and $D[[X]]$ is pseudo-polynomial over $D$.
\end{proof}

\begin{ex}
The ring $D[[X]]$ of the power series over $D$ is not always pseudo-polynomial. For example, suppose that $D$ is a two-dimensional valuation ring, with prime ideals $(0)\subsetneq P\subsetneq M$. Let $m\in M\setminus P$ and $p\in P\setminus(0)$. Let
\begin{equation*}
f:=p+pm^{-1}X+pm^{-2}X^2+\cdots=\sum_{i\geq 0}pm^{-i}X^i=\frac{p}{1-m^{-1}X}.
\end{equation*}
Then, $f\in D[[X]]$ since $p\in P\subseteq m^kD$ for every $k$; moreover, $p^2\in fD[[X]]$ since
\begin{equation*}
f\cdot(p-pm^{-1}X)=\frac{p}{1-m^{-1}X}\cdot p(1-m^{-1}X)=p^2.
\end{equation*}
and $p-pm^{-1}X\in D[[X]]$. In particular, $fD[[X]]$ is a integral ideal of $D[[X]]$ that is unitary with respect to $D$.

We claim that $fD[[X]]$ is not generated by any $d\in D$. Indeed, if $h$ is a unit of $D[[X]]$ then its constant term is a unit of $D$; thus, if $fD[[X]]$ is associated in $R$ to some $d\in D$, then $d$ must be associated in $D$ to the constant term of $f$, i.e., to $p$. However,
\begin{equation*}
\frac{f}{p}=\frac{p\sum_{i\geq 0}m^{-i}X^i}{p}=\sum_{i\geq 0}m^{-i}X^i\notin D[[X]]
\end{equation*}
since $m^{-1}\notin D$. Thus $D[[X]]$ is not pseudo-polynomial over $D$.
\end{ex}

\begin{prop}
Let $D\subset R\subset A$ be integral domains, and let $L$ be the quotient field of $R$. If $A$ is pseudo-polynomial over $R$, $R$ is pseudo-polynomial over $D$ and $A\cap L=R$, then $A$ is pseudo-polynomial over $D$.
\end{prop}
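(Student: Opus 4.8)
The plan is to verify pseudo-polynomiality through the elementwise criterion of Lemma \ref{lemma:unitary-principal}: I must show that every $f\in A\setminus D$ with $fA\cap D\neq(0)$ is associated in $A$ to an element of $D$.

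The key observation, and the only place where the hypothesis $A\cap L=R$ enters, is the following descent of the unitary condition: if $r\in R$ and $rA\cap D\neq(0)$, then already $rR\cap D\neq(0)$. Indeed, write $d=ra$ with $0\neq d\in D$ and $a\in A$; then $a=d/r$ lies in $L$, the quotient field of $R$, and hence $a\in A\cap L=R$, so that $d=ra\in rR$. (In fact $rA\cap D=rR\cap D$.)

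With this in hand I would split into two cases according to whether $f$ lies in $R$. If $f\in R\setminus D$, the descent observation gives $fR\cap D\neq(0)$, so pseudo-polynomiality of $R$ over $D$ together with Lemma \ref{lemma:unitary-principal} yields $f=ud$ with $u$ a unit of $R$ and $d\in D$; since $u,u^{-1}\in R\subseteq A$, $u$ is a unit of $A$ and $f$ is associated in $A$ to $d$. If instead $f\in A\setminus R$, then $fA\cap R\supseteq fA\cap D\neq(0)$ shows $fA$ is unitary over $R$, so pseudo-polynomiality of $A$ over $R$ produces a unit $u$ of $A$ and an $r\in R$ with $f=ur$. Now $rA=fA$, whence $rA\cap D\neq(0)$; if $r\in D$ we are done, and otherwise $r\in R\setminus D$ with $rR\cap D\neq(0)$ by the descent observation, so $r=vd$ for a unit $v$ of $R$ (hence of $A$) and some $d\in D$, giving $f=uvd$, again associated in $A$ to $d\in D$.

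The statement is essentially a transitivity result, and its only genuine content is the descent observation; once the unitary hypothesis over $D$ has been pulled back from $A$ to $R$, everything reduces to two applications of the elementwise criterion plus the bookkeeping that a unit of $R$ remains a unit of $A$. I expect the main (though mild) obstacle to be exactly the role of $A\cap L=R$ in transferring $fA\cap D\neq(0)$ down to $R$: without it, an element $f\in R$ could divide a nonzero $d\in D$ inside the larger ring $A$ while failing to do so inside $R$, and the reduction to pseudo-polynomiality of $R$ over $D$ would break down.
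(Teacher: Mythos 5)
Your proof is correct and is essentially the paper's own argument rendered elementwise: your ``descent observation'' (that $a=d/r\in A\cap L=R$ forces $rA\cap D\neq(0)$ down to $rR\cap D\neq(0)$) is exactly the paper's step $tr^{-1}\in A\cap L=R$, and your two cases simply unfold the paper's uniform chain of first replacing the generator by some $r\in R$ via pseudo-polynomiality of $A$ over $R$, then descending unitarity to $R$, then applying pseudo-polynomiality of $R$ over $D$. The only cosmetic difference is that the paper works directly with the principal ideals (so no case split on whether $f\in R$ is needed), while you invoke the criterion of Lemma \ref{lemma:unitary-principal} and track units explicitly.
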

\begin{proof}
Let $K$ be the quotient field of $D$. Let $a\in A$ be an element such that $aA\cap K\neq(0)$. Then, $aA\cap L\neq(0)$, and thus there is a $r\in R$ such that $aA=rA$. Therefore, if $t\in aA\cap K$ then $tr^{-1}\in A\cap L=R$, and thus $t\in rR$; in particular, $t\in rR\cap K$, and so $rR\cap K\neq(0)$. Since $R$ is pseudo-polynomial over $D$, there is a $d\in D$ such that $rR=dR$; hence, $aA=rA=rRA=dRA=dA$, and $A$ is pseudo-polynomial over $D$.
\end{proof}

\begin{ex}
The previous proposition does not hold without the hypothesis that $A\cap L=R$. Indeed, let $D=\insZ$, $R=\insZ[X]$ and let $A=V$ be the valuation overring of $R$ induced by the valuation $v$ defined by
\begin{equation*}
v\left(\sum_{n=0}^ka_nX^n\right)=\inf\{2v_{(2)}(a_n)+n\}.
\end{equation*}
Then, $V$ is a discrete valuation ring, and its ideals are generated by the powers $X^n$, which are in $R$; hence, $V$ is pseudo-polynomial over $R$. The above part of the section also implies that $R$ is pseudo-polynomial over $D$.

Let $M$ be the maximal ideal of $V$. Then, $M\cap\insQ\neq(0)$ because $2\in M$; indeed, $M\cap\insQ=2\insZ$. However, $M$ is not generated (over $V$) by any rational number, since $v(2)=2$; thus $V$ is not pseudo-polynomial over $D$.
\end{ex}

Integral extensions are often not pseudo-polynomial.
\begin{prop}\label{prop:pseudopol-integral}
Let $D\subseteq R$ be integral domains, and let $K,L$ be the quotient fields of $D$ and $R$ (respectively). Suppose that the extension $K\subseteq L$ is algebraic. Then, $R$ is pseudo-polynomial over $D$ if and only if every principal ideal of $R$ is generated by an element of $D$.
\end{prop}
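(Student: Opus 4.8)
The reverse implication is immediate and does not use the algebraicity hypothesis: if every principal integral ideal of $R$ is generated by an element of $D$, then in particular every \emph{unitary} principal integral ideal is, and so $R$ is pseudo-polynomial by definition. Thus the whole content of the statement lies in the forward implication, and more precisely in the observation that, when $L$ is algebraic over $K$, \emph{every} nonzero principal integral ideal of $R$ is automatically unitary over $D$. Once this is established the proof concludes at once: given a nonzero $r\in R$, the ideal $rR$ is unitary, so the pseudo-polynomial hypothesis forces $rR=dR$ for some $d\in D$ (the zero ideal being generated by $0\in D$).

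The plan for the key point is to exploit the minimal polynomial of $r$ over $K$. Fix $r\in R$ with $r\neq 0$; since $L/K$ is algebraic, $r$ satisfies an irreducible relation
\begin{equation*}
r^m+c_{m-1}r^{m-1}+\cdots+c_1r+c_0=0,\qquad c_i\in K,
\end{equation*}
and $c_0\neq 0$ because $r\neq 0$, i.e.\ because $X$ does not divide the minimal polynomial. Choosing $d\in D$ with $d\neq 0$ and $dc_i\in D$ for every $i$, one rewrites the relation as
\begin{equation*}
dc_0=-r\bigl(dr^{m-1}+(dc_{m-1})r^{m-2}+\cdots+(dc_1)\bigr).
\end{equation*}
The factor in parentheses lies in $D[r]\subseteq R$ because all of its coefficients belong to $D$, while $dc_0$ is a nonzero element of $D$. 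Hence $dc_0\in rR\cap D$, so $rR\cap D\neq(0)$ and $rR$ is unitary by Lemma \ref{lemma:unitary-integral}.

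The only subtlety to watch — and the step I expect to be the main obstacle to phrase cleanly — is the clearing of denominators: the coefficients $c_i$ lie in $K$, not in $D$, so the bare minimal-polynomial relation only expresses $c_0$ as $r$ times an element of $L$ that need not belong to $R$. Multiplying through by a common denominator $d$ repairs this, producing a genuine membership $dc_0\in rR$ with $dc_0$ a nonzero element of $D$; the hypothesis $r\neq 0$ is exactly what guarantees $c_0\neq 0$, and hence $dc_0\neq 0$. With unitariness in hand, the forward implication follows immediately from the definition of pseudo-polynomial algebra, completing the proof.
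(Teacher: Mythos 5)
Your proof is correct, and it turns on the same key point as the paper's: once one knows that algebraicity of $L/K$ forces every nonzero principal integral ideal of $R$ to be unitary over $D$, the equivalence is immediate from the definition of pseudo-polynomiality. The difference lies in how that point is established. The paper factors the extension through a tower $D\subseteq A_1\subseteq A_2\subseteq R$, where $A_1:=R\cap K$ and $A_2$ is the integral closure of $A_1$ in $R$: nonzero ideals of $R$ meet $A_2$ and nonzero ideals of $A_1$ meet $D$ because the respective pairs of rings share a quotient field, while nonzero ideals of $A_2$ meet $A_1$ by the constant-term trick applied to the minimal polynomial of an \emph{integral} element, whose coefficients already lie in $A_1$; the three facts are then chained. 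You instead run the constant-term trick directly over $K$ and repair the fact that the coefficients $c_i$ lie only in $K$ by multiplying through by a common denominator $d\in D$, obtaining $dc_0\in rR\cap D$ with $dc_0\neq 0$, so that $rR$ is unitary by Lemma \ref{lemma:unitary-integral}. Your route is shorter: the denominator-clearing step absorbs both ``same quotient field'' steps of the paper's tower (which are themselves denominator-clearing arguments in disguise), and it sidesteps the unproved assertion in the paper that $A_2$ has quotient field $L$. You also correctly isolate where $r\neq 0$ is used, namely to guarantee $c_0\neq 0$ via irreducibility of the minimal polynomial. Both arguments are complete; yours is arguably the cleaner one.
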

\begin{proof}
It is enough to prove that every ideal of $R$ meets $D$. Write $D\subseteq A_1\subseteq A_2\subseteq R$, where $A_1:=R\cap K$ and $A_2$ is the integral closure of $A_1$ in $R$. Then, both $D$ and $A_1$ have quotient field $K$ and both $A_2$ and $R$ have quotient field $L$; thus, every ideal of $R$ meets $A_2$ and every ideal of $A_1$ meets $D$. The claim will thus be proved if every ideal of $A_2$ meets $A_1$.

Let $a\in A_2$: then, $a$ is integral over $A_1$, and thus it has a minimal polynomial $f(X)=f_0+f_1X+\cdots+X^n$. Then, $f_0=-a(f_1+f_2a+\cdots+a^{n-1})$ belongs to both $A_1$ and $aA_2$. and thus $aA_1\cap A_2\neq(0)$. The claim is proved.
\end{proof}

\begin{cor}
Let $R$ be the integral closure of $\insZ$ in a proper extension $L$ of $\insQ$. Then, $R$ is not pseudo-polynomial over $\insZ$.
\end{cor}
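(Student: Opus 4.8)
The plan is to invoke Proposition \ref{prop:pseudopol-integral}, which reduces the claim to exhibiting a single principal ideal of $R$ that is \emph{not} generated by a rational integer. First I would note that, since every element of $R$ is integral (hence algebraic) over $\insZ$, the quotient field $F$ of $R$ is algebraic over $\insQ$. Because $L$ is a proper extension of $\insQ$, the ring $R$ contains an algebraic integer outside $\insZ$, so $F\supsetneq\insQ$ and $R\supsetneq\insZ$. As $\insQ\subseteq F$ is algebraic, Proposition \ref{prop:pseudopol-integral} applies and says that $R$ is pseudo-polynomial over $\insZ$ if and only if every principal ideal of $R$ is generated by an element of $\insZ$. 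Thus it suffices to produce $\alpha\in R$ with $\alpha R\neq dR$ for all $d\in\insZ$.

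Next I would set up a norm obstruction inside a suitable number field. Choose an algebraic integer $\theta\in R\setminus\insZ$, put $F_0:=\insQ(\theta)$ and $n:=[F_0:\insQ]\geq 2$, and let $m(X)\in\insZ[X]$ be the monic irreducible minimal polynomial of $\theta$. For every $a\in\insZ$ one has $N_{F_0/\insQ}(a-\theta)=m(a)$. The key step is to find some $a\in\insZ$ for which $|m(a)|$ is not a perfect $n$-th power. This follows from an elementary asymptotic argument: if $|m(a)|$ were a perfect $n$-th power for all integers $a$, then for large $a$ the integers $m(a)^{1/n}=a+c_{n-1}/n+O(1/a)$ would form a sequence whose consecutive differences tend to $1$, hence would eventually equal $a+c$ for a fixed $c\in\insZ$; this forces $m(X)=(X+c)^n$, contradicting the irreducibility of $m$ since $n\geq 2$.

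Fixing such an $a$, I set $\alpha:=a-\theta\in R\setminus\insZ$, so $|N_{F_0/\insQ}(\alpha)|$ is not a perfect $n$-th power. Suppose toward a contradiction that $\alpha R=dR$ for some $d\in\insZ$, $d\neq 0$. Then $u:=\alpha/d$ is a unit of $R$ lying in $F_0$; since $R\cap F_0$ is precisely the integral closure of $\insZ$ in $F_0$, both $u$ and $u^{-1}=d/\alpha$ lie in the ring of integers of $F_0$, so $u$ is a unit there and $N_{F_0/\insQ}(u)=\pm 1$. Multiplicativity of the norm then yields $N_{F_0/\insQ}(\alpha)=N_{F_0/\insQ}(u)\,d^{\,n}=\pm d^{\,n}$, a perfect $n$-th power up to sign, contradicting the choice of $\alpha$. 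Hence $\alpha R$ is generated by no rational integer, and Proposition \ref{prop:pseudopol-integral} shows that $R$ is not pseudo-polynomial over $\insZ$.

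The main obstacle is the existence step: guaranteeing an element of $R$ whose field norm is not a perfect $n$-th power, so that no relation $\alpha=ud$ with $u$ a unit and $d\in\insZ$ can hold. Passing to the subfield $F_0=\insQ(\theta)$ is what makes a well-behaved norm available, and the identity $N_{F_0/\insQ}(a-\theta)=m(a)$ converts the problem into the purely polynomial statement handled above; the other point requiring care is the identification of $R\cap F_0$ with the full ring of integers of $F_0$, which is what forces units of $R$ lying in $F_0$ to have norm $\pm 1$.
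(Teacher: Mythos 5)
Your proof is correct, but after the common first step it takes a genuinely different route from the paper's. Both arguments begin by invoking Proposition \ref{prop:pseudopol-integral} to reduce the claim to exhibiting a principal ideal of $R$ that no rational integer generates. The paper then uses prime-splitting: since $L\neq\insQ$, some prime $p$ of $\insZ$ has at least two primes of $R$ above it, and such primes cannot be generated by elements of $\insZ$. You replace this arithmetic input with a norm obstruction: choosing $\theta\in R\setminus\insZ$, working in the number field $F_0=\insQ(\theta)$ of degree $n\geq 2$, and using the identity $N_{F_0/\insQ}(a-\theta)=m(a)$, you find an integer $a$ for which $|m(a)|$ is not a perfect $n$-th power (your asymptotic argument that $m(a)^{1/n}-a$ is an eventually constant integer sequence, forcing $m(X)=(X+c)^n$, is sound), so that $a-\theta$ cannot be associated in $R$ to any $d\in\insZ$, the key point being that a unit $u=\alpha/d$ of $R$ lying in $F_0$ is a unit of $R\cap F_0=\mathcal{O}_{F_0}$ and hence has norm $\pm 1$. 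Your approach buys two things: it is self-contained, avoiding the (unproved in the paper) fact that some prime splits in any proper algebraic extension, which rests on Dedekind's factorization theorem or density results; and it directly produces a \emph{principal} ideal violating the criterion of Proposition \ref{prop:pseudopol-integral}, whereas the paper's primes above $p$ need not be principal, so its proof strictly needs one more easy step (e.g., take $x\in\mathfrak{p}_1\setminus\mathfrak{p}_2$ and note that $xR=dR$ with $d\in\insZ$ would force $d\in p\insZ$ and hence $x\in pR\subseteq\mathfrak{p}_2$). One shared caveat: both your proof and the paper's implicitly assume $L$ is algebraic over $\insQ$ (otherwise $R$ may equal $\insZ$, e.g.\ for $L=\insQ(X)$, and the statement fails), so your assertion that $R$ contains an algebraic integer outside $\insZ$ relies on that standing hypothesis.
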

\begin{proof}
Since $L\neq\insQ$, there is at least one prime $p$ of $\insZ$ which splits in $R$. Thus, the prime ideals over $p\insZ$ are not generated by elements of $\insZ$. The claim follows from Proposition \ref{prop:pseudopol-integral}.
\end{proof}

\begin{oss}
It is possible for a proper integral extension to be pseudo-polynomial. For example, if $V$ is a valuation domain and $W$ is an extension of $V$ such that the extension of value groups is trivial, then every ideal of $W$ is generated by elements of $V$.
\end{oss}

\section{The local Picard group}\label{sect:picard}
\begin{defin}
Let $R$ be a $D$-algebra and let $\iota:\Pic(D)\longrightarrow\Pic(R)$ be the canonical map. We define the \emph{local Picard group} of $R$ as a $D$-algebra as
\begin{equation*}
\locpic(R,D):=\frac{\Pic(R)}{\iota(\Pic(D))}.
\end{equation*}
Likewise, if $R$ is an extension of $D$ then the \emph{unitary local Picard group} of $D\subseteq R$ is
\begin{equation*}
\locpic_u(R,D):=\frac{\Pic_u(R,D)}{\iota(\Pic(D))}.
\end{equation*}
\end{defin}

\begin{oss}
~\begin{enumerate}
\item Note that if $I$ is an invertible integral ideal of $D$ then $IR$ is integral and unitary. Thus $\iota(\Pic(D))\subseteq\Pic_u(R)$ and $\locpic_u(R,D)$ is well-defined. 
\item From Proposition \ref{prop:quoz-picu} and the basic properties of groups we have
\begin{equation*}
\frac{\locpic(R,D)}{\locpic_u(R,D)}=\frac{\Pic(R)/\iota(\Pic(D))}{\Pic_u(R,D)/\iota(\Pic(D))}\simeq\frac{\Pic(R)}{\Pic_u(R,D)}\simeq\Pic(RK).
\end{equation*}
\item Every ring $R$ can be considered as a $\insZ$-algebra; in this case, the map $\iota$ is just the zero map. Therefore, the Picard group $\Pic(R)$ can also be seen as the local Picard group $\locpic(R,\insZ)$. Likewise, if $F$ is a field and $R$ is an $F$-algebra, $\locpic(R,F)$ is just $\Pic(R)$.
\end{enumerate}
\end{oss}

\begin{ex}\label{ex:DX}
Let $D$ be an integral domain and $R=D[X]$ the polynomial ring over $D$. Then, the canonical map $\Pic(D)\longrightarrow\Pic(D[X])$ is surjective if and only if $D$ is seminormal \cite[Theorem 1.6]{pic-RX-seminormal}, i.e., $\locpic(D[X],D)$ is the trivial group if and only if $D$ is seminormal. More generally, $\locpic(D[X],D)$ is isomorphic to $\Pic(A+XD[X])$, where $A$ is the base ring of $D$ (i.e., $A=\insZ$ if $D$ has characteristic $0$, $A=\ins{F}_p$ is $D$ has characteristic $p>0$) \cite[Theorem 3.8]{picard-DX}, and a similar result holds with more indeterminates.
\end{ex}

\begin{prop}\label{prop:locpic-funct}
Let $D$ be an integral domain. Then, the assignments $R\mapsto\locpic(R,D)$ and $R\mapsto\locpic_u(R,D)$ give rise to functors from the category of integral $D$-algebras (where maps are $D$-algebra homomorphisms) and the category of abelian groups.
\end{prop}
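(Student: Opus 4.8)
The plan is to build both functors in the standard way for quotients of a functorial construction: define the action on morphisms by descending the induced map on Picard groups to the appropriate quotient, and then verify the two functor axioms formally.

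First, recall that $\Pic$ is itself a functor, so a $D$-algebra homomorphism $\phi\colon R\longrightarrow R'$ between integral $D$-algebras induces a group homomorphism $\phi^\ast\colon\Pic(R)\longrightarrow\Pic(R')$. Writing $\iota_R\colon\Pic(D)\longrightarrow\Pic(R)$ and $\iota_{R'}\colon\Pic(D)\longrightarrow\Pic(R')$ for the two canonical maps, the key observation is that $\iota_R$ and $\iota_{R'}$ are precisely the maps induced on Picard groups by the structure homomorphisms $D\longrightarrow R$ and $D\longrightarrow R'$. Since $\phi$ is a $D$-algebra homomorphism, the composite $D\longrightarrow R\xrightarrow{\phi}R'$ equals the structure map $D\longrightarrow R'$; applying the functoriality of $\Pic$ to this factorization yields $\phi^\ast\circ\iota_R=\iota_{R'}$. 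In particular $\phi^\ast(\iota_R(\Pic(D)))=\iota_{R'}(\Pic(D))$, so $\phi^\ast$ carries the subgroup $\iota_R(\Pic(D))$ into $\iota_{R'}(\Pic(D))$ and therefore descends to a well-defined homomorphism $\locpic(R,D)\longrightarrow\locpic(R',D)$, which I take as the value of the functor $\locpic(-,D)$ on $\phi$.

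For the unitary version I additionally need $\phi^\ast$ to preserve the subgroup $\Pic_u(R,D)$. The plan is to use Proposition \ref{prop:quoz-picu}, which identifies $\Pic_u(R,D)$ as the kernel of the extension map $(-)K\colon\Pic(R)\longrightarrow\Pic(RK)$, $[I]\mapsto[IK]$. Since $\phi$ is a $D$-algebra map it sends $S:=D\setminus\{0\}$ to nonzero elements, and hence induces $\phi_K\colon RK\longrightarrow R'K$ on the localizations $RK=S^{-1}R$ and $R'K=S^{-1}R'$; this gives a commutative square of rings whose image under $\Pic$ is a commutative square relating $\phi^\ast$, $\phi_K^\ast$, and the two extension maps $(-)K$. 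Chasing this square shows that if $[IK]$ is trivial in $\Pic(RK)$ then $[\phi^\ast(I)K]$ is trivial in $\Pic(R'K)$; hence $\phi^\ast(\Pic_u(R,D))\subseteq\Pic_u(R',D)$. Combined with the containment $\iota_{R'}(\Pic(D))\subseteq\Pic_u(R',D)$ noted after the definition of $\locpic_u$, the map $\phi^\ast$ descends to a homomorphism $\locpic_u(R,D)\longrightarrow\locpic_u(R',D)$.

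Finally, the two functor axioms are immediate from the functoriality of $\Pic$: the identity $D$-algebra homomorphism induces the identity on $\Pic(R)$ and hence on each quotient, and the relation $(\psi\circ\phi)^\ast=\psi^\ast\circ\phi^\ast$ passes to the quotients since all the maps involved are restrictions or descents of the corresponding maps on Picard groups. The only genuine content is the preservation of the unitary subgroup $\Pic_u$, so I expect the commutative-square argument in the third paragraph, and in particular checking that $\phi_K$ is well defined and that localization commutes with $\phi^\ast$, to be the main point requiring care; everything else is a formal consequence of the functoriality of $\Pic$.
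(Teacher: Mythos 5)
Your proof is correct, and its overall skeleton is the same as the paper's: use functoriality of $\Pic$ to get $\phi^\ast\colon\Pic(R)\longrightarrow\Pic(R')$, note $\phi^\ast\circ\iota_R=\iota_{R'}$ (so the distinguished subgroups are carried into each other), descend to the quotients, and observe that identities and compositions are inherited. Where you genuinely diverge is in the unitary case: the paper dispatches it with ``the proof \ldots is the same,'' which implicitly rests on the direct observation that $\phi^\ast$ sends the class of a unitary integral invertible ideal to the class of a unitary integral ideal (immediate when $\phi$ is injective, since then $\phi^\ast[I]=[\phi(I)R']$ and $\phi(I)R'\cap K\supseteq I\cap D\neq(0)$). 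You instead invoke Proposition \ref{prop:quoz-picu} to identify $\Pic_u(R,D)$ with the kernel of $\Pic(R)\longrightarrow\Pic(RK)$ and chase a commutative square built from the induced map $RK\longrightarrow R'K$ on localizations at $S=D\setminus\{0\}$. This is slightly longer but buys something real: a $D$-algebra homomorphism between extensions of $D$ need not be injective (e.g.\ an evaluation map), and in that case $\phi^\ast$ is not literally ideal extension, so the ``same proof'' shortcut requires either restricting attention to injective maps or rephrasing in terms of modules; your kernel characterization sidesteps this entirely and works uniformly. The only point to watch is that your square exists because each $s\in S$ maps to an invertible element of $S^{-1}R'$, which holds precisely because $D\longrightarrow R'$ is injective --- worth stating explicitly, but not a gap.
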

\begin{proof}
Let $\phi:R\longrightarrow R'$ be a map of $D$-algebras. Since $\Pic$ is a functor, $\phi$ induces a map $\phi^\ast:\Pic(R)\longrightarrow\Pic(R')$ sending $\iota_R(\Pic(D))$ to $\iota_{R'}(\Pic(D))$; hence $\phi^\ast$ induces a map $\phi^\sharp:\locpic(R,D)\longrightarrow\locpic(R',D)$. The fact that $\phi\mapsto\phi^\sharp$ respects compositions is seen in the same way.

The proof for the unitary local Picard group is the same.
\end{proof}

\begin{prop}
Let $D\subseteq R\subseteq A$ be integral domains. Then,
\begin{equation*}
\locpic(A,R)\simeq\frac{\locpic(A,D)}{\widetilde{\iota}(\locpic(R,D))}
\end{equation*}
and
\begin{equation*}
\locpic_u(A,R)\simeq\frac{\locpic_u(A,D)}{\widetilde{\iota_u}(\locpic_u(R,D))}
\end{equation*}
where $\widetilde{\iota}:\locpic(R,D)\longrightarrow\locpic(A,D)$ is the map induced by the inclusion $R\subseteq A$ and $\widetilde{\iota_u}$ is the restriction of $\widetilde{\iota}$ to $\locpic_u(R,D)$.
\end{prop}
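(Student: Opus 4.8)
The plan is to derive both isomorphisms from the third isomorphism theorem, once the composition of the relevant canonical maps is recorded. Denote by $\iota_{R/D}\colon\Pic(D)\to\Pic(R)$, $\iota_{A/R}\colon\Pic(R)\to\Pic(A)$ and $\iota_{A/D}\colon\Pic(D)\to\Pic(A)$ the canonical maps attached to $D\subseteq R\subseteq A$. Since on invertible ideals these send $[I]$ to $[IR]$, $[J]$ to $[JA]$ and $[I]$ to $[IA]$ respectively, functoriality of $\Pic$ (Proposition~\ref{prop:locpic-funct}) gives $\iota_{A/D}=\iota_{A/R}\circ\iota_{R/D}$, whence $\iota_{A/D}(\Pic(D))\subseteq\iota_{A/R}(\Pic(R))$. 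As $\widetilde\iota$ is induced by $\iota_{A/R}$, its image in $\locpic(A,D)=\Pic(A)/\iota_{A/D}(\Pic(D))$ is $\iota_{A/R}(\Pic(R))/\iota_{A/D}(\Pic(D))$, and the third isomorphism theorem immediately yields
\[
\frac{\locpic(A,D)}{\widetilde\iota(\locpic(R,D))}=\frac{\Pic(A)/\iota_{A/D}(\Pic(D))}{\iota_{A/R}(\Pic(R))/\iota_{A/D}(\Pic(D))}\simeq\frac{\Pic(A)}{\iota_{A/R}(\Pic(R))}=\locpic(A,R),
\]
the quotient being legitimate precisely because $\iota_{A/D}(\Pic(D))\subseteq\iota_{A/R}(\Pic(R))$.

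For the unitary statement I would run the same reduction. First one checks that $\iota_{A/R}$ carries $\Pic_u(R,D)$ into $\Pic_u(A,D)$: if $I\subseteq R$ is integral with $I\cap K\neq(0)$, then $IA\subseteq A$ is integral and $IA\cap K\supseteq I\cap K\neq(0)$, so $IA$ is again unitary over $D$. Hence $\widetilde{\iota_u}(\locpic_u(R,D))=\iota_{A/R}(\Pic_u(R,D))/\iota_{A/D}(\Pic(D))$ lies inside $\locpic_u(A,D)$, and the third isomorphism theorem collapses the left-hand side to $\Pic_u(A,D)/\iota_{A/R}(\Pic_u(R,D))$. Everything thus reduces to a natural isomorphism
\[
\frac{\Pic_u(A,D)}{\iota_{A/R}(\Pic_u(R,D))}\simeq\frac{\Pic_u(A,R)}{\iota_{A/R}(\Pic(R))}.
\]
The obvious candidate is the inclusion $\Pic_u(A,D)\hookrightarrow\Pic_u(A,R)$ followed by the quotient map (well defined since $\iota_{A/R}(\Pic_u(R,D))\subseteq\iota_{A/R}(\Pic(R))$); by the second isomorphism theorem it is an isomorphism exactly when (A) $\Pic_u(A,R)=\Pic_u(A,D)\cdot\iota_{A/R}(\Pic(R))$ and (B) $\Pic_u(A,D)\cap\iota_{A/R}(\Pic(R))=\iota_{A/R}(\Pic_u(R,D))$.

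The main obstacle is establishing (A) and (B), which—unlike the non-unitary case—is not formal. I would attack it through the localization description of Proposition~\ref{prop:quoz-picu}: writing $L=\mathrm{Frac}(R)$, one has $\Pic_u(A,D)=\ker(\Pic(A)\to\Pic(AK))$ and $\Pic_u(A,R)=\ker(\Pic(A)\to\Pic(AL))$, while $AL=(R\setminus\{0\})^{-1}(AK)$ is a localization of $AK$. Transporting an integral ideal $J$ of $A$ that is unitary over $R$ to $AK$ and invoking Corollary~\ref{cor:IKprinc}, the class $[JK]$ lands in $\ker(\Pic(AK)\to\Pic(AL))$, which by Proposition~\ref{prop:quoz-picu} is the unitary subgroup of $\Pic(AK)$ over $RK$; assertion (A) then becomes the claim that this subgroup coincides with the image of $\Pic(RK)\to\Pic(AK)$, and (B) becomes the matching injectivity. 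In short, the unitary case hinges on exactness of $\Pic(RK)\to\Pic(AK)\to\Pic(AL)$ at the middle term, and I expect this—after clearing denominators using that invertible ideals are finitely generated—to be the substantive step, one that genuinely exploits the structure of the extension rather than the bare tower $D\subseteq R\subseteq A$.
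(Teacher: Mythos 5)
Your proof of the first isomorphism is correct and is exactly the paper's argument: the identity $\iota_{A/D}=\iota_{A/R}\circ\iota_{R/D}$ gives $\iota_{A/D}(\Pic(D))\subseteq\iota_{A/R}(\Pic(R))\subseteq\Pic(A)$, and the third isomorphism theorem does the rest.

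The unitary half is where the genuine gap lies. Your proposal correctly reduces that statement to your conditions (A) and (B), but never proves them, and the further reduction you sketch is not accurate: what (A) needs is that the image of $\Pic_u(A,R)$ in $\Pic(AK)$ be contained in the image of $\Pic(R)\longrightarrow\Pic(AK)$, not of $\Pic(RK)\longrightarrow\Pic(AK)$, and these differ precisely when $\Pic(R)\longrightarrow\Pic(RK)$ is not surjective. So exactness of $\Pic(RK)\to\Pic(AK)\to\Pic(AL)$ at the middle term is not the right criterion; in the example below that sequence is exact while (A) fails.

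That said, your instinct that this step is substantive is not only right but understated, and the defect here is the paper's, not yours: the paper's entire treatment of this half is the sentence ``the case of the unitary Picard group is analogous'', and it is not analogous, because the numerators $\Pic_u(A,R)$ and $\Pic_u(A,D)$ are now different groups and the subgroup $\iota_{A/R}(\Pic(R))$ defining $\locpic_u(A,R)$ need not lie in $\Pic_u(A,D)$, so no isomorphism theorem applies formally. In fact the unitary statement is false in this generality, so (A) and (B) cannot be proved. Take $A=RK$: since $A\subseteq L$, every nonzero integral ideal of $A$ is unitary over $R$, so $\Pic_u(A,R)=\Pic(A)$ and $\locpic_u(A,R)\simeq\Pic(RK)/\iota_{A/R}(\Pic(R))$; on the other hand, an integral ideal of $A$ that is unitary over $D$ contains a nonzero element of $K$, hence a unit of $RK$, so it equals $A$, whence $\Pic_u(A,D)=(0)$ and the right-hand side of the claimed isomorphism is trivial. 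Thus for $A=RK$ the unitary claim says exactly that $\Pic(R)\longrightarrow\Pic(RK)$ is surjective, which fails in general: for $D=\mathbb{R}[z]$ and $R$ the localization of $\mathbb{R}[x,y,z]/(x^2+y^2-z^2)$ at $(x,y,z)$, one has $\Pic(R)=(0)$ because $R$ is local, while $RK=R[1/z]$ (every nonzero $g(z)$ is a power of $z$ times a unit of $R$) has $\Pic(R[1/z])\simeq\mathbb{Z}/2\mathbb{Z}$, generated by a ruling of the cone: $\Spec(R[1/z])$ is regular, and deleting the principal divisor $V(z)$ does not change the divisor class group $\mathrm{Cl}(R)\simeq\mathbb{Z}/2\mathbb{Z}$ of the $A_1$-singularity. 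So (A) and (B) are genuine hypotheses; they do hold, for instance, when $\Pic(RK)=\Pic(AK)=(0)$, since then Proposition \ref{prop:quoz-picu} gives $\Pic_u(R,D)=\Pic(R)$ and $\Pic_u(A,D)=\Pic(A)=\Pic_u(A,R)$, and the unitary claim collapses to the non-unitary one you already proved. This hypothesis covers the polynomial-like algebras the paper is interested in, but it (or something like it) must be added to the statement; note that the proposition is never invoked later in the paper, so nothing downstream is affected.
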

\begin{proof}
Let $\iota_{DR}:\Pic(D)\longrightarrow\Pic(R)$, $\iota_{RA}:\Pic(R)\longrightarrow\Pic(A)$, $\iota_{DA}:\Pic(D)\longrightarrow\Pic(A)$ be the canonical maps. Then, $\iota_{DA}=\iota_{RA}\circ\iota_{DR}$, and thus in particular $\iota_{RA}(\Pic(R))\supseteq\iota_{DA}(\Pic(D))$. Hence, there is a surjective map
\begin{equation*}
\locpic(A,D)=\frac{\Pic(A)}{\iota_{DA}(\Pic(D))}\longrightarrow\frac{\Pic(A)}{\iota_{RA}(\Pic(R))}=\locpic(A,R),
\end{equation*}
whose kernel is
\begin{equation*}
\frac{\iota_{RA}(\Pic(R))}{\iota_{DA}(\Pic(D))}=\frac{\iota_{RA}(\Pic(R))}{\iota_{RA}\circ\iota_{DR}(\Pic(D))}=\widetilde{\iota}\left(\frac{\Pic(R)}{\iota_{DR}(\Pic(D)}\right)=\widetilde{\iota}(\locpic(R,D)).
\end{equation*}
The claim for $\locpic(A,D)$ is proved. The case of the unitary Picard group is analogous.
\end{proof}

In Example \ref{ex:DX}, the natural map of $\Pic(D)$ into $\Pic(R)$ is not only injective, but give rise to a direct sum decomposition $\Pic(R)\simeq\Pic(D)\oplus\locpic(R,D)$ \cite[Section 2]{picard-DX}; this is a more general feature of retract $D$-algebras, and can be proved essentially in the same way.
\begin{prop}\label{prop:locpic-directsum}
Let $D$ be an integral domain and let $R$ be a retract $D$-algebra. Then:
\begin{enumerate}[(a)]
\item the canonical map $\iota:\Pic(D)\longrightarrow\Pic(R)$ is injective;
\item $\iota(\Pic(D))$ is a direct summand of $\Pic(R)$ and of $\Pic_u(R,D)$;
\item $\Pic(R)\simeq\Pic(D)\oplus\locpic(R,D)$;
\item $\Pic_u(R,D)\simeq\Pic(D)\oplus\locpic_u(R,D)$.
\end{enumerate}
\end{prop}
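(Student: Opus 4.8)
The plan is to exploit the retraction directly: a retract $\epsilon$ furnishes a one-sided inverse to the functorial map $\iota$ at the level of Picard groups, and the whole statement then follows from the splitting lemma for abelian groups.

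First I would apply the Picard functor to the two $D$-algebra homomorphisms at hand. Writing $i\colon D\longrightarrow R$ for the structural inclusion and $\epsilon\colon R\longrightarrow D$ for a retract, functoriality of $\Pic$ produces maps $\iota=i^\ast\colon\Pic(D)\longrightarrow\Pic(R)$ (which, as recalled in Section \ref{sect:prelim:pic}, is exactly the extension-of-ideals map $[I]\mapsto[IR]$) and $\epsilon^\ast\colon\Pic(R)\longrightarrow\Pic(D)$. Since $\epsilon\circ i=\mathrm{id}_D$ and $\Pic$ is a covariant functor, we obtain $\epsilon^\ast\circ\iota=(\epsilon\circ i)^\ast=\mathrm{id}_{\Pic(D)}$. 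Thus $\iota$ admits the left inverse $\epsilon^\ast$, which immediately gives the injectivity claimed in part (a).

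Next, part (b) is an application of the splitting lemma: a left inverse to the monomorphism $\iota$ realizes $\iota(\Pic(D))$ as a direct summand, namely $\Pic(R)=\iota(\Pic(D))\oplus\ker\epsilon^\ast$. For the unitary version I would use that the extension $IR$ of an invertible integral ideal $I$ of $D$ is again integral and unitary, so that $\iota(\Pic(D))\subseteq\Pic_u(R,D)$; consequently the restriction $\epsilon^\ast|_{\Pic_u(R,D)}$ is still a left inverse to $\iota$ regarded as a map into $\Pic_u(R,D)$, and the same splitting argument yields $\Pic_u(R,D)=\iota(\Pic(D))\oplus(\ker\epsilon^\ast\cap\Pic_u(R,D))$.

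Finally, parts (c) and (d) follow by passing to the quotients that define the (unitary) local Picard groups. From the decomposition of (b) we get $\locpic(R,D)=\Pic(R)/\iota(\Pic(D))\simeq\ker\epsilon^\ast$, and since $\iota$ is injective we may identify $\iota(\Pic(D))\simeq\Pic(D)$, whence $\Pic(R)\simeq\Pic(D)\oplus\locpic(R,D)$; the unitary statement is identical with $\Pic_u$ in place of $\Pic$ and $\locpic_u$ in place of $\locpic$. I do not anticipate a genuine obstacle here, since the argument is formal once the retraction is pushed through the functor; the only point requiring care is the compatibility with the unitary subgroup, i.e.\ verifying that $\iota$ lands inside $\Pic_u(R,D)$ and that $\epsilon^\ast$ restricts to a retraction of it.
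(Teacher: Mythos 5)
Your proposal is correct and follows essentially the same route as the paper: both use functoriality of $\Pic$ to get $\epsilon^\ast\circ\iota=\mathrm{id}_{\Pic(D)}$, deduce injectivity, and split the resulting exact sequence, identifying $\ker\epsilon^\ast$ with the (unitary) local Picard group. Your extra care in checking that $\iota$ lands inside $\Pic_u(R,D)$ and that $\epsilon^\ast$ restricts to it is exactly what the paper's terse ``the reasoning is the same'' implicitly relies on (and is noted in the remark following the definition of $\locpic_u$).
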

\begin{proof}
Let $i$ be the inclusion of $D$ into $R$. The composition $\epsilon\circ i$ is the identity on $D$; since $A\mapsto\Pic(A)$ is a functor, it follows that $\epsilon^\ast\circ i^\ast=\epsilon^\ast\circ\iota$ is the identity on $\Pic(D)$. Therefore, $\iota$ is injective and the exact sequence
\begin{equation*}
0\longrightarrow\ker(\epsilon^\ast)\longrightarrow\Pic(R)\xrightarrow{~~\epsilon^\ast~~}\Pic(D) \longrightarrow 0
\end{equation*}
splits. The kernel of $\epsilon^\ast$ is isomorphic to the quotient between $\Pic(R)$ and $\iota(\Pic(D))$, and thus by definition is isomorphic to $\locpic(R,D)$. Hence, $\Pic(R)\simeq\Pic(D)\oplus\locpic(R,D)$.

The reasoning for $\locpic_u(R,D)$ is the same.
\end{proof}

\section{Localization of the local Picard group}\label{sect:jaffard}
We aim to study the local Picard group through the lens of localization and of extension by Jaffard overrings, as was done for the Picard group of the ring of integer-valued polynomials in \cite{PicInt}. We shall follow the same method of the proofs therein, which are generalizations of the methods given in \cite[Chapter VIII]{intD}.

The following theorem corresponds to \cite[Proposition VIII.1.6]{intD} and \cite[Proposition 4.3]{PicInt}.
\begin{teor}\label{teor:exseq-inttype}
Let $D$ be an integral domain and let $R$ be a pseudo-polynomial retract $D$-algebra. Let $\Theta$ be a complete family of flat overrings of $D$. Then, there are exact sequences
\begin{equation}\label{eq:exseq-inttype}
0\longrightarrow\Pic(D,\Theta)\longrightarrow\Pic(R)\xrightarrow{~~\pi_\Theta~~}\prod_{T\in\Theta}\Pic(RT).
\end{equation}
and
\begin{equation}\label{eq:exseq-inttype-u}
0\longrightarrow\Pic(D,\Theta)\longrightarrow\Pic_u(R,D)\xrightarrow{~~\pi_\Theta~~}\prod_{T\in\Theta}\Pic_u(RT,R).
\end{equation}
\end{teor}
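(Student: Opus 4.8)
The plan is to prove the two sequences together, treating \eqref{eq:exseq-inttype} as primary and deducing \eqref{eq:exseq-inttype-u} by restriction. Write $L$ for the quotient field of $R$, let $\iota\colon\Pic(D)\longrightarrow\Pic(R)$ be the canonical map, and set $\pi_\Theta([J]):=([J\cdot RT])_{T\in\Theta}$. The left-hand map is the restriction of $\iota$ to $\Pic(D,\Theta):=\ker\bigl(\Pic(D)\to\prod_{T}\Pic(T)\bigr)$. Exactness at the left and the inclusion of the image in the kernel are the easy parts: since $R$ is a retract $D$-algebra, $\iota$ is injective by Proposition \ref{prop:locpic-directsum}, so its restriction is injective; and if $[I]\in\Pic(D,\Theta)$ then $IT$ is principal for every $T$, so $(IR)\cdot RT=(IT)R$ is principal in $RT$ and $\pi_\Theta(\iota([I]))=0$.

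The substance is the reverse inclusion $\ker\pi_\Theta\subseteq\iota(\Pic(D,\Theta))$, which I would carry out in three moves. First, I would show that $\ker\pi_\Theta\subseteq\Pic_u(R,D)$: because $KT=K$ gives $RK=RT\cdot K$ for every $T$, the map $\Pic(R)\to\Pic(RK)$ factors through each $\Pic(RT)$, so a class killed by all the $RT$ is killed in $\Pic(RK)$, and by Proposition \ref{prop:quoz-picu} this kernel is precisely $\Pic_u(R,D)$. (This is also what makes \eqref{eq:exseq-inttype-u} land in $\prod_T\Pic_u(RT,R)$, where $\Pic_u(RT,R)=\Pic(RT)$ because $RT$ is an overring of $R$.) Thus, given $[J]\in\ker\pi_\Theta$, Corollary \ref{cor:IKprinc} lets me choose the representative $J\subseteq R$ to be integral and unitary. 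Second, I would set $I:=J\cap K$; the retract hypothesis gives $R\cap K=D$ (Proposition \ref{prop:retract}\ref{prop:retract:capK}), so $I\subseteq D$ is a nonzero integral ideal of $D$, and the inclusion $IR\subseteq J$ is immediate. Third, to get the reverse inclusion $J\subseteq IR$ I would descend locally: for each $T$ the extension $J\cdot RT$ is principal and unitary over $T$, and—using that $RT$ is a retract $T$-algebra (Proposition \ref{prop:retract-extension}) with $RT\cap K=T$, together with pseudo-polynomiality over $T$—Lemma \ref{lemma:pseudopol-intersec-princ} lets me write $J\cdot RT=d_T\,RT$ with $d_T\in T$ and $(J\cdot RT)\cap K=d_T T$. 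Matching this against $I$ and gluing over the complete family $\Theta$ (where $D=\bigcap_T T$ and ideals satisfy $A=\bigcap_T AT$) should yield $IT=d_T T$ for every $T$, whence $[I]\in\Pic(D,\Theta)$, together with $J=IR$; this gives $[J]=[IR]=\iota([I])$.

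The main obstacle is the gluing step: passing from the local equalities ``$J\cdot RT=d_T RT$ with $d_T\in T$'' to the global statements $IT=d_T T$ and $J=IR$. The delicate point is that extension of ideals does not in general commute with intersection—neither with $(-)\cap K$ (used to define $I$) nor with the intersection $\bigcap_{T\in\Theta}$ furnished by completeness. I expect to resolve both by exploiting flatness: flatness of each $T$ over $D$, and of $RT$ over $R$ so that $\{RT\}_{T\in\Theta}$ behaves as a complete family of flat overrings of $R$, is what permits these intersections to be computed termwise. Once $J=IR$ with $J$ invertible is in hand, invertibility of $I$ in $D$—and hence that $I$ genuinely defines a class in $\Pic(D,\Theta)$—follows from the local principality of the $IT$ across the complete family. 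Finally, \eqref{eq:exseq-inttype-u} is obtained by restricting \eqref{eq:exseq-inttype} to $\Pic_u(R,D)$: the left term and the kernel computation are unchanged (the kernel already lies in $\Pic_u(R,D)$ by the first move), and the target is $\prod_T\Pic_u(RT,R)=\prod_T\Pic(RT)$.
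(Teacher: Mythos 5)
Your skeleton matches the paper's proof up to the decisive step: injectivity of the left map via Proposition \ref{prop:locpic-directsum}, the image-in-kernel computation, the reduction of a kernel class to a unitary integral representative via Proposition \ref{prop:quoz-picu} and Corollary \ref{cor:IKprinc}, and the use of pseudo-polynomiality of $RT$ over $T$ (through Lemma \ref{lemma:pseudopol-intersec-princ}, with $RT\cap K=T$ coming from Proposition \ref{prop:retract-extension}) to write $J\cdot RT=d_TRT$ with $d_T\in T$ --- all of this is exactly as in the paper. The gap is the step you yourself flag as ``the main obstacle'': it is not a technicality that flatness resolves, and the difficulty is not where you locate it. Flatness does handle the finite intersection, giving $(J\cap K)T=JT\cap K$ and hence $IT=d_TT$ (this is the same mechanism as Lemma \ref{lemma:flatintersect}). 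What flatness cannot give is (i) the reverse inclusion $J\subseteq IR$, which amounts to the infinite-intersection statement $\bigcap_T d_TRT\subseteq\bigl(\bigcap_T d_TT\bigr)R$, and (ii) invertibility of $I:=J\cap D$; both hinge on \emph{finite generation}, and the full contraction $J\cap D$ of a finitely generated ideal need not be finitely generated. In particular your closing inference --- that invertibility of $I$ ``follows from the local principality of the $IT$ across the complete family'' --- is false as stated: if $D$ is an almost Dedekind domain that is not Dedekind, then $\Theta=\{D_M\mid M\in\Max(D)\}$ is a complete family of flat overrings, and a non-finitely-generated maximal ideal $M$ satisfies $MD_N$ principal for every $N$, yet $M$ is not invertible. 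Since $\Pic(D,\Theta)$ consists of classes of invertible ideals, without finite generation you cannot even form the class $[I]$.

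The paper's proof is engineered precisely around this point, and its two key ingredients are absent from your proposal. First, it never uses the full contraction: it builds a finitely generated subideal $L=L_1+\cdots+L_n\subseteq J\cap D$ with the same extensions, where local finiteness ensures $(J\cap D)T=T$ for all but finitely many $T$, and each $L_i:=x_iT_i\cap D$ is finitely generated because $T_i$ is a Jaffard overring (the paper invokes \cite[Lemma 5.9]{starloc}; compare Lemma \ref{lemma:fg-intersec}); being finitely generated and locally principal over the complete family, $L$ is invertible. Second, the gluing $J=LR$ is obtained not from flatness but from a divisoriality argument: completeness gives $R=\bigcap_T RT$, so $Z\mapsto\bigcap_T ZRT$ is a star operation on $R$, and invertible ideals are divisorial, hence fixed by every star operation; applying this to the two invertible ideals $J$ and $LR$, which have equal extensions to every $RT$, yields $J=LR$. (In fairness, these ingredients use local finiteness and the Jaffard property, which do not appear among the theorem's stated hypotheses --- the statement says only ``complete'' --- so you could not have guessed them from the statement alone; but as written your plan cannot be completed, because the missing input is finite generation, not commutation of extension with intersection.)
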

\begin{proof}
We first show the result for $\Pic(R)$.

The map $\Pic(D,\Theta)\longrightarrow\Pic(R)$ is the restriction of the extension map $\iota$, which is injective by Proposition \ref{prop:locpic-directsum}, and thus it is itself injective. By construction, if $[I]\in\Pic(D,\Theta)$ then $IT$ is principal for every $T\in\Theta$, and thus $IRT=ITR$ is principal; thus, the kernel of $\pi_\Theta$ contains $\iota(\Pic(D,\Theta))$.

Suppose now that $[I]\in\ker\pi_\Theta$; then, $IT$ is principal for every $T\in\Theta$ and thus $IK$ is principal. By the proof of Proposition \ref{prop:quoz-picu}, $[I]\in\Pic_u(R,D)$, and thus without loss of generality we can suppose that $I$ is unitary and integral. Then, $(I\cap D)T=T$ for all but finitely many elements of $\Theta$, say $T_1,\ldots,T_n$. By Lemma \ref{lemma:pseudopol-intersec-princ}, for each $i$ there is an $x_i\in IT_i$ such that $IT_i=x_iRT_i$. The ideal $L_i:=x_iT_i\cap D$ is finitely generated over $D$ (since $T_i$ is a Jaffard overring \cite[Lemma 5.9]{starloc}) and $L_iT_i=x_iT_i$. Therefore, the ideal $L:=L_1+\cdots+L_n$ is a finitely generated ideal of $D$; moreover, $LT=T$ if $T\in\Theta\setminus\{T_1,\ldots,T_n\}$ and $LT_i=L_iT_i=x_iT_i=IT_i$, and thus $L$ is locally principal. Therefore, $L$ is an invertible ideal such that $LT$ is principal for every $T\in\Theta$ (thus, $L\in\Pic(D,\Theta)$) and $LRT=IRT$ for every $T\in\Theta$. As the family $\Theta$ is complete, we have $R=\bigcap\{RT\mid T\in\Theta\}$; therefore, the map $\star:Z\mapsto\bigcap\{ZRT\mid T\in\Theta\}$ is a star operation on $R$ (see for example \cite[\textsection 32]{gilmer}), and $I$ and $LR$ are invertible ideals of $R$. Thus
\begin{equation*}
I=I^\star=\bigcap_{T\in\Theta}IT=\bigcap_{T\in\Theta}LRT=(LR)^\star=LR,
\end{equation*}
i.e., $[I]=\iota([L])$. Thus $\ker\pi_\Theta\subseteq\iota(\Pic(D,\Theta))$, as claimed.

The result for $\Pic_u(R,D)$ follows by restricting the previous reasoning to unitary ideals and noting that the extension of a unitary integral ideal is still unitary and integral.
\end{proof}

Putting more hypothesis on $\Theta$, we are able to get stronger statements. Lemma \ref{lemma:fg-intersec} below is a variant of \cite[Lemma 5.9]{starloc}.
\begin{lemma}\label{lemma:flatintersect}
Let $D$ be an integral domain, $R$ a $D$-algebra with quotient field $L$ and $T$ a flat overring of $D$. If $X_1,X_2$ are $R$-submodules of $L$, then $(X_1\cap X_2)T=X_1T\cap X_2T$.
\end{lemma}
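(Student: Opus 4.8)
The plan is to deduce the nontrivial inclusion from the flatness of $T$ over $D$; the trivial inclusion $(X_1\cap X_2)T\subseteq X_1T\cap X_2T$ is immediate, since any element $ct$ with $c\in X_1\cap X_2$ and $t\in T$ lies in both $X_1T$ and $X_2T$, and both sides are closed under sums. The heart of the matter is the reverse containment, where flatness must be used.

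First I would record the key identification. Since $T$ is an overring of $D$, we have $D\subseteq T\subseteq K\subseteq L$, and for $S:=D\setminus\{0\}$ the localization $K\otimes_D T=S^{-1}T=K$; consequently $L\otimes_D T\simeq L\otimes_K(K\otimes_D T)\simeq L$, the isomorphism being induced by the multiplication map $\ell\otimes t\mapsto\ell t$. As $T$ is flat over $D$, for any $R$-submodule $X\subseteq L$ the inclusion $X\hookrightarrow L$ gives an injection $X\otimes_D T\hookrightarrow L\otimes_D T\simeq L$, whose image is exactly the product $XT$. Thus $X\otimes_D T$ may be identified with $XT$ as a submodule of $L$, and this applies simultaneously to $X_1$, $X_2$, $X_1\cap X_2$ and $X_1+X_2$.

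Next I would tensor the short exact sequence of $D$-modules
\begin{equation*}
0\longrightarrow X_1\cap X_2\xrightarrow{~\Delta~}X_1\oplus X_2\xrightarrow{~d~}X_1+X_2\longrightarrow 0,
\end{equation*}
where $\Delta(z)=(z,z)$ and $d(a,b)=a-b$, with the flat module $T$. Flatness preserves exactness, so after the identifications above the sequence
\begin{equation*}
0\longrightarrow(X_1\cap X_2)T\xrightarrow{~\Delta~}X_1T\oplus X_2T\xrightarrow{~d~}(X_1+X_2)T\longrightarrow 0
\end{equation*}
is exact, with $\Delta$ and $d$ now literal maps of submodules of $L$. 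Exactness at the middle term gives $\operatorname{im}\Delta=\ker d$; but $\ker d=\{(c,c)\mid c\in X_1T\cap X_2T\}$, while $\operatorname{im}\Delta=\{(c,c)\mid c\in(X_1\cap X_2)T\}$. Comparing the two descriptions yields $(X_1\cap X_2)T=X_1T\cap X_2T$, as desired.

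The main obstacle is purely bookkeeping: one must check carefully that the abstract tensor products $X\otimes_D T$ are faithfully realized as the concrete products $XT$ inside the common ambient field $L$, so that the diagonal and difference maps coming from the tensored sequence coincide with the naive ones on submodules of $L$. Once this dictionary is in place, the exactness of the tensored sequence does all the work, and no element-chasing beyond the description of $\ker d$ is required.
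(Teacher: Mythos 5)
Your proof is correct, but it follows a genuinely different route from the paper's. The paper's proof is a one-line reduction: since $T$ is a flat overring of $D$, the ring $RT$ is a flat overring of $R$; because $X_1,X_2$ are $R$-modules one rewrites $X_iT=X_iRT$ and invokes the standard fact that extension to a flat overring of $R$ commutes with finite intersections of $R$-submodules of $L$. You instead prove that underlying fact from scratch: you identify $X\otimes_D T$ with $XT$ inside $L$ (via $L\otimes_D T\simeq L$, which holds because $T$ is an overring of $D$, so $K\otimes_D T=K$, and flatness makes $X\otimes_D T\to L\otimes_D T$ injective), and then tensor the exact sequence $0\to X_1\cap X_2\to X_1\oplus X_2\to X_1+X_2\to 0$ with the flat module $T$, reading off the conclusion from exactness at the middle term. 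Your argument is self-contained and, notably, uses only the $D$-module structure of $X_1,X_2$: it proves a slightly stronger statement, valid for arbitrary $D$-submodules of $L$, with the $R$-module hypothesis and the intermediate ring $RT$ playing no role. What the paper's approach buys is brevity and the explicit appearance of $RT$, which is exactly the object needed in the subsequent results (Lemma \ref{lemma:fg-intersec} and the Bhargava ring corollary); what yours buys is independence from any quoted property of flat overrings, at the cost of the bookkeeping needed to verify that the tensor identifications turn $\Delta\otimes 1$ and $d\otimes 1$ into the literal diagonal and difference maps on submodules of $L$ --- a check you rightly flag, and which does go through because every identification is induced by multiplication in $L$ and hence commutes with the inclusion maps.
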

\begin{proof}
If $T$ is a flat overring of $D$, then $TR$ is a flat overring of $R$, and since $X_1,X_2$ are $R$-modules,
\begin{equation*}
(X_1\cap X_2)T=(X_1\cap X_2)RT=X_1RT\cap X_2RT=X_1T\cap X_2T,
\end{equation*}
as claimed.
\end{proof}

\begin{lemma}\label{lemma:fg-intersec}
Let $D$ be an integral domain and let $R$ be an extension of $D$; let $T$ be a Jaffard overring of $D$. Let $J$ be a unitary ideal of $RT$. If $J$ is finitely generated over $RT$, then $J\cap R$ is finitely generated over $R$.
\end{lemma}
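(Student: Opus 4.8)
The goal is to prove Lemma~\ref{lemma:fg-intersec}: if $T$ is a Jaffard overring of $D$, $J$ is a unitary finitely generated ideal of $RT$, then $J\cap R$ is finitely generated over $R$. The statement is a variant of \cite[Lemma 5.9]{starloc}, which presumably handles the case $R=D$; the task is to lift that result to the $R$-algebra setting. The plan is to use the fact that $T$ being a Jaffard overring means $\{T,T^\perp\}$ is a Jaffard family of $D$, and in particular $TT^\perp=K$. The key leverage is that unitarity of $J$ ($J\cap K\neq(0)$) forces $J$ to become trivial after extension to $T^\perp$, which is what will let us descend finite generation.

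\bigskip
\noindent\textbf{Main strategy.} First I would observe that $\{RT, RT^\perp\}$ should play the role of a decomposition of $R$ analogous to $\{T,T^\perp\}$ for $D$; since $T$ and $T^\perp$ are flat overrings with $TT^\perp=K$, the completeness identity $R=RT\cap RT^\perp$ holds (using Lemma~\ref{lemma:flatintersect} with $X_1=X_2=R$, or directly from completeness of the Jaffard family). Set $M:=J\cap R$. The natural plan is to intersect the generators of $J$ back into $R$ by combining the behaviour over $T$ and over $T^\perp$. Writing $J=(g_1,\dots,g_n)RT$, I would consider the extension $M T = (J\cap R)T$; applying Lemma~\ref{lemma:flatintersect} gives $MT = JT \cap RT$, and since $J$ is already an ideal of $RT$ we get $MT=J$ (as $JT=J$ and $J\subseteq RT$). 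Symmetrically, because $J$ is unitary, $JK=RK$ (an integral unitary ideal becomes the whole ring after inverting $D\setminus\{0\}$, by the proposition following Lemma~\ref{lemma:unitary-integral}), and hence $J T^\perp = RT^\perp$ after extending, so $MT^\perp=RT^\perp$ is trivially finitely generated. Thus $M$ extends to a finitely generated module over each of $RT$ and $RT^\perp$, and $M=MT\cap MT^\perp$ by completeness.

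\bigskip
\noindent\textbf{Descending finite generation.} The remaining step is the genuine content: from $MT$ finitely generated over $RT$ (equal to $J$, with generators $g_1,\dots,g_n$) and $MT^\perp=RT^\perp$ finitely generated over $RT^\perp$ (with generator $1$), conclude $M$ is finitely generated over $R$. Here I would clear denominators: each generator $g_i$ of $J$ can be written as $g_i = \sum_j r_{ij} s_{ij}$ with $r_{ij}\in R$, $s_{ij}\in T$, and similarly $1\in MT^\perp$ means there are elements of $M$ whose $T^\perp$-combination is $1$. The candidate finite generating set for $M$ over $R$ should be built from the finitely many $R$-components $r_{ij}$ together with finitely many elements of $M$ witnessing $1\in MT^\perp$. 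To verify this set generates $M$, I would take an arbitrary $x\in M$, use $x\in J=MT$ to write $x$ as a $T$-combination of the $g_i$ (hence an $R$-combination modulo $T$-coefficients), use $x\in R=RT\cap RT^\perp$ together with $MT^\perp=RT^\perp$ to control the $T^\perp$-side, and then invoke $TT^\perp=K$ and local finiteness to patch the two descriptions into a single $R$-linear expression.

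\bigskip
\noindent\textbf{The main obstacle.} The hard part will be precisely this patching in the last paragraph: going from ``finitely generated after extension to each of two flat overrings whose product is $K$'' to ``finitely generated over $R$'' is exactly the step that requires the Jaffard (not merely flat) hypothesis, and the honest argument likely mirrors the proof of \cite[Lemma 5.9]{starloc} but must be carried out for the $R$-module $M$ rather than for an ideal of $D$. The delicate point is that $T^\perp$ may fail to be a Jaffard overring a priori, so I would lean on the fact that $\{T,T^\perp\}$ is a genuine Jaffard family (guaranteed by $TT^\perp=K$) rather than on any symmetric role of the two factors; unitarity is what breaks the symmetry favourably, trivializing the $T^\perp$-side. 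I expect the cleanest route is to reduce to the two-element Jaffard family $\{T,T^\perp\}$ and then either cite or transcribe the finite-generation descent from \cite{starloc}, checking that every step there used only flatness of the overrings and completeness of the family, both of which survive the passage from $D$ to $R$ via Lemma~\ref{lemma:flatintersect}.
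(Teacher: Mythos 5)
You follow the paper's own route: reduce to the two-element Jaffard family $\{T,T^\perp\}$, let unitarity trivialize the $T^\perp$-side, use Lemma \ref{lemma:flatintersect} to get $MT=J$ for $M:=J\cap R$, and build a finite generating set from components of the generators of $J$ together with witnesses of $1\in MT^\perp$. However, there is a genuine error in how you choose that set: you decompose each generator as $g_i=\sum_j r_{ij}s_{ij}$ with $r_{ij}\in R$ and $s_{ij}\in T$, which only uses $g_i\in RT$. The elements $r_{ij}$ obtained this way need not lie in $M$, so your candidate set need not even be a subset of $M$, and an ideal of $R$ generated by elements outside $M$ cannot equal $M$. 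The fix is to use the equality you yourself established: since $g_i\in J=MT$, you may write $g_i=\sum_j m_{ij}s_{ij}$ with $m_{ij}\in M$ and $s_{ij}\in T$. This is exactly what the paper does: for each $i$ it picks a finitely generated ideal $I_i\subseteq I:=J\cap R$ with $x_i\in I_iT$, and a finitely generated $I_0\subseteq I$ whose extension $I_0T^\perp$ is the unit ideal.

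Second, the step you flag as the main obstacle --- element-wise ``patching'' via $TT^\perp=K$ and local finiteness, or a transcription of \cite[Lemma 5.9]{starloc} --- is not needed, and local finiteness is vacuous for a two-element family anyway. Once the generating set is corrected, let $N\subseteq M$ be the ideal of $R$ it generates. Then $NT$ contains every $g_i$, so $J\subseteq NT\subseteq MT=J$, i.e., $NT=MT$; and $NT^\perp$ contains a combination equal to $1$, so $NT^\perp=RT^\perp=MT^\perp$. Now apply to $N$ (rather than to $M$) the completeness identity you already invoked: for any ideal $A$ of $R$ one has $A=ART\cap ART^\perp$, since $ART\cap ART^\perp\subseteq\bigcap_P AD_P=A$, where $P$ ranges over the maximal ideals of $D$, each such $P$ survives in $T$ or in $T^\perp$ by completeness of $\{T,T^\perp\}$, and flatness gives $T\subseteq D_P$ whenever $PT\neq T$. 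Hence $N=NT\cap NT^\perp=MT\cap MT^\perp=M$, and $M$ is finitely generated. This one-line conclusion is precisely how the paper closes its proof: $L\subseteq I$, $LT=IT$ and $LT^\perp=IT^\perp$ force $L=I$.
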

\begin{proof}
Let $T^\perp$ be the orthogonal to $T$ with respect to $D$, and let $I:=J\cap R$. Using Lemma \ref{lemma:flatintersect}, we have
\begin{equation*}
IT^\perp=(J\cap R)T^\perp=JT^\perp\cap T^\perp=JRTT^\perp\cap T^\perp=JRK\cap T^\perp.
\end{equation*}
Since $J$ is unitary, $J\cap K\neq(0)$; thus, $JRK=(JK)R=KR$ and $IT^\perp=T^\perp$. Hence there is a finitely generated ideal $I_0\subseteq I$ such that $I_0T^\perp=T^\perp$.

Let $x_1,\ldots,x_m$ be the generators of $J$. Since $IT=(J\cap R)T=JT\cap T=J$, for each $i$ there is a finitely generated ideal $I_i\subseteq I$ such that $x_i\in I_iT$; then, $L:=I_0+I_1+\cdots+I_m$ is a finitely generated ideal contained in $I$ such that $LT=J=IT$ and $LT^\perp=T^\perp=IT^\perp$. It follows that $L=I$, i.e., $I=J\cap R$ is finitely generated.
\end{proof}

The following is an analogue of Theorems 4.4 and 4.7 of \cite{PicInt}.
\begin{teor}\label{teor:exseq-inttype-jaffard}
Let $D$ be an integral domain and let $R$ be a pseudo-polynomial retract $D$-algebra. Let $\Theta$ be a Jaffard family of $D$. Then, there is an exact sequence
\begin{equation*}
0\longrightarrow\Pic(D,\Theta)\longrightarrow\Pic_u(R,D)\xrightarrow{~~\pi_\Theta~~}\bigoplus_{T\in\Theta}\Pic_u(RT,T)\longrightarrow 0,
\end{equation*}
and
\begin{equation*}
\locpic_u(R,D)\simeq\bigoplus_{T\in\Theta}\locpic_u(RT,T).
\end{equation*}
\end{teor}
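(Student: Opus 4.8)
The plan is to build on Theorem \ref{teor:exseq-inttype}, which already gives the exact sequence
\begin{equation*}
0\longrightarrow\Pic(D,\Theta)\longrightarrow\Pic_u(R,D)\xrightarrow{~~\pi_\Theta~~}\prod_{T\in\Theta}\Pic_u(RT,R).
\end{equation*}
So two things remain: first, to show that the image of $\pi_\Theta$ lands inside the \emph{direct sum} $\bigoplus_T\Pic_u(RT,T)$ rather than the full product, and second, to show that $\pi_\Theta$ is surjective onto this direct sum. Once the displayed sequence is established, the isomorphism $\locpic_u(R,D)\simeq\bigoplus_T\locpic_u(RT,T)$ follows formally: by Proposition \ref{prop:retract-extension} each $RT$ is a retract $T$-algebra, so Proposition \ref{prop:locpic-directsum} splits off $\Pic(T)$ from $\Pic_u(RT,T)$; quotienting the exact sequence by the images of $\Pic(D)$ and $\bigoplus_T\Pic(T)$ and using completeness of $\Theta$ (so that $\Pic(D,\Theta)$ maps onto the relevant piece) collapses the kernel term and yields the isomorphism of local Picard groups.

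First I would address why the codomain can be replaced by $\Pic_u(RT,T)$ instead of $\Pic_u(RT,R)$: since $T$ is a flat (Jaffard) overring of $D$, extension $RT\otimes_R(-)$ takes unitary-over-$R$ ideals to unitary-over-$T$ ideals, and the two notions of unitarity agree after localization because $RTK = RK\cdot T$ shares the quotient field $K$. Next, to see the image sits in the direct sum, I would take an integral unitary invertible ideal $I$ of $R$ and use local finiteness of $\Theta$: the ideal $I\cap D$ is nonzero (Lemma \ref{lemma:unitary-integral}), so $(I\cap D)T = T$ for all but finitely many $T$, whence $IT = (I\cap D)TR \cdot(\text{unit})$ is principal over $RT$ for almost all $T$, i.e. $\pi_\Theta([I])$ has only finitely many nonzero components.

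The surjectivity is where the real work lies, and it mirrors the integer-valued polynomial argument. Given a family $([J_T])_{T\in\Theta}$ in the direct sum, with $J_T$ a unitary integral invertible ideal of $RT$ and $J_T = RT$ for all $T$ outside a finite set $T_1,\dots,T_n$, I would set $I := \bigcap_{i=1}^n (J_{T_i}\cap R)$. The key input is Lemma \ref{lemma:fg-intersec}: each $J_{T_i}\cap R$ is finitely generated over $R$ because $J_{T_i}$ is finitely generated and unitary over a Jaffard overring. Using independence of the Jaffard family (so that $T_iT_j=K$ for $i\neq j$) together with Lemma \ref{lemma:flatintersect}, I would check that $IT_i = J_{T_i}$ for each $i$ and $IT = RT$ for the remaining $T$, which shows $I$ is locally invertible, hence invertible, and that $\pi_\Theta([I]) = ([J_T])_T$.

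The hard part will be verifying cleanly that the locally-defined ideal $I$ actually localizes correctly at each member of $\Theta$ simultaneously — that the intersection $\bigcap_i(J_{T_i}\cap R)$ has the prescribed extension $J_{T_i}$ at the $i$-th overring while being trivial at the others. This is exactly where independence ($T_iT_j=K$) and the flat-intersection formula of Lemma \ref{lemma:flatintersect} must be combined carefully, and where unitarity is essential to guarantee that extending back and forth between $R$ and $RT_i$ does not lose information. Completeness of $\Theta$, which underlies the star-operation recovery $Z=\bigcap_T ZRT$ used in the proof of Theorem \ref{teor:exseq-inttype}, then guarantees that the globally reconstructed $I$ is uniquely pinned down by its localizations.
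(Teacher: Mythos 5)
Your overall skeleton is the same as the paper's: start from Theorem \ref{teor:exseq-inttype}, use unitarity plus local finiteness to see that the image of $\pi_\Theta$ lies in the direct sum, use Lemma \ref{lemma:fg-intersec} together with contractions $J\cap R$ and the independence/flatness computations (Lemma \ref{lemma:flatintersect}) to produce preimages, and finish with a diagram argument for the local Picard groups. However, there is a genuine gap in your surjectivity step. You define $I:=\bigcap_{i=1}^n(J_{T_i}\cap R)$ and conclude that $I$ is ``locally invertible, hence invertible.'' Locally principal does \emph{not} imply invertible for an arbitrary ideal of a domain: an invertible ideal must be finitely generated, and there exist domains (e.g.\ almost Dedekind domains that are not Dedekind) having locally principal ideals that are not finitely generated and not invertible. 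Lemma \ref{lemma:fg-intersec} gives finite generation of each contraction $J_{T_i}\cap R$ separately, but a finite intersection of finitely generated ideals need not be finitely generated unless $R$ is coherent, and nothing in the hypotheses forces coherence. So, as written, the invertibility of your $I$ is not established.

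The gap is repairable, and the paper's proof shows the cleanest route around it: since the image of $\pi_\Theta$ is a subgroup and $\bigoplus_{T\in\Theta}\Pic_u(RT,T)$ is generated by its coordinate subgroups, it suffices to hit one coordinate at a time, i.e.\ to find, for a fixed $T$ and a single integral unitary $J$, an ideal $I$ with $IT=J$ and $IS=RS$ for $S\neq T$; then $I=J\cap R$ is finitely generated directly by Lemma \ref{lemma:fg-intersec}, and finitely generated plus locally principal does imply invertible. (Alternatively, keep all coordinates at once but use the product $\prod_{i=1}^n(J_{T_i}\cap R)$ instead of the intersection: it is automatically finitely generated and has the same localizations.) A second, smaller point: your derivation of $\locpic_u(R,D)\simeq\bigoplus_{T\in\Theta}\locpic_u(RT,T)$ is workable but under-justified, and ``completeness of $\Theta$'' is not the relevant ingredient. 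What is needed is either the exactness of $0\to\Pic(D,\Theta)\to\Pic(D)\to\bigoplus_{T\in\Theta}\Pic(T)\to 0$ — this is how the paper proceeds, feeding this row and the newly proved sequence into the snake lemma — or else the observation that the retract of $RT$ built in Proposition \ref{prop:retract-extension} restricts to the retract of $R$, so that the splittings of Proposition \ref{prop:locpic-directsum} are compatible with $\pi_\Theta$ and the kernel $\Pic(D,\Theta)$ lies entirely in the $\Pic(D)$-summand; either of these closes your argument, but one of them must be invoked explicitly.
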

\begin{proof}
By Theorem \ref{teor:exseq-inttype}, to prove the first claim it is enough to show that the range of $\pi_\Theta$ is the direct sum. Indeed, if $[I]\in\Pic_u(R,D)$, then by Corollary \ref{cor:IKprinc} we can suppose without loss of generality that $I$ is unitary and integral. Hence, $(I\cap D)T=T$ for all but finitely many $T\in\Theta$, and thus the range of $\pi_\Theta$ is contained in the direct sum.

To prove the converse, we need to show that, given a fixed $T\in\Theta$ and a $[J]\in\Pic_u(RT,T)$, there is an $[I]\in\Pic_u(R,D)$ such that $[IT]=[J]$ and $[IS]=[RS]$ for all $S\in\Theta\setminus\{T\}$. By Corollary \ref{cor:IKprinc}, we can suppose that $J$ is integral and unitary. By Lemma \ref{lemma:fg-intersec}, $I:=J\cap R$ is finitely generated over $R$. We claim that $I$ satisfies the previous conditions.

Indeed, $IT=(J\cap R)T=JT\cap RT=JT$, while $IS=(J\cap R)S=JS\cap RS=JTS\cap RS=JK\cap RS=RS$ since $J\cap K\neq(0)$ and thus $1\in JK$. To show that $I$ is invertible, let $M$ be a maximal ideal of $R$. If $M\cap D=(0)$, then $R_M$ contains $RK$ and thus $RT$, for every $T\in\Theta$; hence, $IR_M$ is principal since so is $IRT$. If $M\cap D=P\neq(0)$, then $R_M$ contains $D_P$, and thus $R_M\supseteq IS$, where $S$ is the member of $\Theta$ such that $PS\neq S$. Thus $IR_M$ is principal since $IRS$ is principal. Therefore, $I$ is locally principal and thus invertible.

Therefore, the direct sum is in the image of $\pi_\Theta$, and the sequence in the statement is exact.

Consider now the commutative diagram 
\begin{equation*}
\begin{tikzcd}
0\arrow[r] & \Pic(D,\Theta) \arrow[r]\arrow[d,equal] & \Pic(D) \arrow[r]\arrow[d,"\iota_D"] & \displaystyle{\bigoplus_{T\in\Theta}\Pic(T)}\arrow[d,"\iota_\Theta"] \arrow[r] & 0\\
0\arrow[r] & \Pic(D,\Theta) \arrow[r] & \Pic_u(R,D)\arrow[r] & \displaystyle{\bigoplus_{T\in\Theta}\Pic_u(RT,T)}\arrow[r] & 0
\end{tikzcd}
\end{equation*}
where $\iota_D$ and $\iota_\Theta$ are the natural maps. Since the leftmost vertical map is the equality, the snake lemma gives an isomorphism between the cokernel of $\iota_D$ (namely, $\locpic_u(R,D)$) and the cokernel of $\iota_\Theta$ (namely, the direct sum of the $\locpic_u(RT,T)$). The claim follows.
\end{proof}

\begin{cor}\label{cor:exseq-inttype-jaffard}
Let $D$ be an integral domain with quotient field $K$, and let $R$ be a pseudo-polynomial retract $D$-algebra such that $\Pic(RK)=(0)$. Let $\Theta$ be a Jaffard family of $D$. Then, there is an exact sequence
\begin{equation*}
0\longrightarrow\Pic(D,\Theta)\longrightarrow\Pic(R)\xrightarrow{~~\pi_\Theta~~}\bigoplus_{T\in\Theta}\Pic(RT)\longrightarrow 0,
\end{equation*}
and
\begin{equation*}
\locpic(R,D)\simeq\bigoplus_{T\in\Theta}\locpic(RT,T).
\end{equation*}
\end{cor}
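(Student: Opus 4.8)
The plan is to deduce this corollary directly from Theorem \ref{teor:exseq-inttype-jaffard}, whose hypotheses are all in force, by showing that the extra assumption $\Pic(RK)=(0)$ forces the unitary Picard groups appearing there to coincide with the full Picard groups. Once this is established, the unitary exact sequence and the unitary isomorphism of the theorem become verbatim the statements claimed here.

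First I would record the consequence of Proposition \ref{prop:quoz-picu} for $R$ itself: since $\Pic(R)/\Pic_u(R,D)\simeq\Pic(RK)=(0)$, we have $\Pic(R)=\Pic_u(R,D)$, and therefore $\locpic(R,D)=\locpic_u(R,D)$, both being the quotient of the same group by $\iota(\Pic(D))$. Next I would establish the analogous equalities after extension to each $T\in\Theta$. The key observation is that $T$, being an overring of $D$, has quotient field $K$, and from $R\subseteq RT\subseteq RK$ one gets $RTK=RK$; hence $\Pic((RT)K)=\Pic(RK)=(0)$. Since $RT$ is an overring of the domain $R$ it is itself an integral domain, so Proposition \ref{prop:quoz-picu} applies to the $T$-algebra $RT$ and yields $\Pic(RT)/\Pic_u(RT,T)\simeq\Pic((RT)K)=(0)$, i.e.\ $\Pic(RT)=\Pic_u(RT,T)$. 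Passing to the quotient by $\iota(\Pic(T))$ gives $\locpic(RT,T)=\locpic_u(RT,T)$ for every $T\in\Theta$.

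Finally I would substitute these identifications into the exact sequence and the isomorphism of Theorem \ref{teor:exseq-inttype-jaffard}. The left-hand term $\Pic(D,\Theta)$ is unchanged, the middle term $\Pic_u(R,D)$ becomes $\Pic(R)$, and each summand $\Pic_u(RT,T)$ becomes $\Pic(RT)$, producing the asserted exact sequence; likewise the isomorphism $\locpic_u(R,D)\simeq\bigoplus_{T}\locpic_u(RT,T)$ becomes $\locpic(R,D)\simeq\bigoplus_{T}\locpic(RT,T)$. I do not expect a genuine obstacle in this argument: everything of substance is already carried by Theorem \ref{teor:exseq-inttype-jaffard}, and the only points to verify are the harmless identity $RTK=RK$ and the applicability of Proposition \ref{prop:quoz-picu} to $RT$ as a $T$-algebra, which holds because $RT$ is a domain.
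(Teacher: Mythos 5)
Your proposal is correct and follows exactly the paper's own argument: the paper likewise notes that $\Pic(RK)=(0)$ together with Proposition \ref{prop:quoz-picu} gives $\Pic_u(R,D)=\Pic(R)$ and $\locpic_u(R,D)=\locpic(R,D)$, and that $\Pic(RTK)=\Pic(RK)$ yields the same identifications for each $T\in\Theta$, after which Theorem \ref{teor:exseq-inttype-jaffard} is applied verbatim. Your write-up just makes explicit the small verifications (such as $RTK=RK$) that the paper leaves implicit.
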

\begin{proof}
If $\Pic(RK)=(0)$, then $\Pic_u(R,D)=\Pic(R)$ and $\locpic_u(R,D)=\locpic(R,D)$ (and the same for $T$, since $\Pic(RTK)=\Pic(RK)$). The claim follows from Theorem \ref{teor:exseq-inttype-jaffard}.
\end{proof}

\begin{oss}
Without the hypothesis $\Pic(RK)=(0)$, Corollary \ref{cor:exseq-inttype-jaffard} does not hold. Indeed, in that case we would have a commutative diagram
\begin{equation*}
\begin{tikzcd}
0\arrow[r] & \Pic(D,\Theta) \arrow[r]\arrow[d,equal] & \Pic_u(R,D)\arrow[r]\arrow[d] & \displaystyle{\bigoplus_{T\in\Theta}\Pic_u(RT,T)}\arrow[r]\arrow[d] & 0\\
0\arrow[r] & \Pic(D,\Theta) \arrow[r] & \Pic(R)\arrow[r] & \displaystyle{\bigoplus_{T\in\Theta}\Pic(RT)}\arrow[r] & 0
\end{tikzcd}
\end{equation*}
and an application of the snake lemma would give an isomorphism
\begin{equation*}
\Pic(RK)\simeq\bigoplus_{T\in\Theta}\Pic(RK),
\end{equation*}
which does not hold, in general.
\end{oss}

The following corollaries are special cases of Theorem \ref{teor:exseq-inttype-jaffard} and Corollary \ref{cor:exseq-inttype-jaffard}.
\begin{cor}
Let $D$ be an integral domain and let $\Theta$ be a Jaffard family of $D$. Let $\mathbf{X}$ be a family of independent indeterminates over $D$. Then,
\begin{equation*}
\locpic(D[\mathbf{X}],D)\simeq\bigoplus_{T\in\Theta}\locpic(T[\mathbf{X}],T).
\end{equation*}
\end{cor}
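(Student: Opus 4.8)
The plan is to apply Corollary \ref{cor:exseq-inttype-jaffard} with $R:=D[\mathbf{X}]$; the whole proof then reduces to verifying the three hypotheses of that corollary (that $R$ is a pseudo-polynomial retract $D$-algebra with $\Pic(RK)=(0)$) together with the identification $RT=T[\mathbf{X}]$ for each $T\in\Theta$.

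First I would check that $D[\mathbf{X}]$ is a retract $D$-algebra: fixing any $i\in D$, the evaluation homomorphism sending every indeterminate to $i$ is a $D$-algebra retract, exactly as in the polynomial example following the definition of retract algebra. Pseudo-polynomiality is then immediate from Proposition \ref{prop:pseudopol-pol}, since $D[\mathbf{X}]\subseteq K[\mathbf{X}]$ and $D[\mathbf{X}]\cap K=D$, so the ``in particular'' clause of that proposition applies directly.

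Next I would verify $\Pic(RK)=(0)$. Because each $T$ contains $D$, one has $D[\mathbf{X}]K=K[\mathbf{X}]$, and a polynomial ring over a field is a unique factorization domain for any set of indeterminates; hence $\Pic(K[\mathbf{X}])=(0)$. Finally I would record the identification $RT=T[\mathbf{X}]$: the inclusion $D[\mathbf{X}]T\subseteq T[\mathbf{X}]$ is clear, and conversely every element of $T[\mathbf{X}]$ is a $T$-linear combination of monomials lying in $D[\mathbf{X}]$, giving the reverse inclusion.

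With these facts in place, Corollary \ref{cor:exseq-inttype-jaffard} yields
\[
\locpic(D[\mathbf{X}],D)\simeq\bigoplus_{T\in\Theta}\locpic(D[\mathbf{X}]T,T)=\bigoplus_{T\in\Theta}\locpic(T[\mathbf{X}],T),
\]
which is the claim. I do not expect any genuine obstacle: the substance of the result is already carried by Theorem \ref{teor:exseq-inttype-jaffard} and Corollary \ref{cor:exseq-inttype-jaffard}, and this statement is purely a matter of confirming that $D[\mathbf{X}]$ meets their hypotheses. The only step requiring even a sentence of justification is the vanishing $\Pic(K[\mathbf{X}])=(0)$, and that reduces to the standard fact that polynomial rings over a field are unique factorization domains.
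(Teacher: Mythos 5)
Your proposal is correct and takes essentially the same route as the paper: both apply Corollary \ref{cor:exseq-inttype-jaffard} to $R=D[\mathbf{X}]$ after observing that $D[\mathbf{X}]$ is a pseudo-polynomial retract $D$-algebra and that $D[\mathbf{X}]K=K[\mathbf{X}]$ is a unique factorization domain, hence has trivial Picard group. The only difference is that you spell out the verifications (retract via evaluation, pseudo-polynomiality via Proposition \ref{prop:pseudopol-pol}, and the identification $D[\mathbf{X}]T=T[\mathbf{X}]$) that the paper's one-line proof leaves implicit.
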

\begin{proof}
The polynomial ring $D[\mathbf{X}]$ is a pseudo-polynomial retract $D$-algebra. Since $D[\XX]K=K[\XX]$ is a unique factorization domain, we can apply Corollary \ref{cor:exseq-inttype-jaffard}.
\end{proof}

\begin{cor}
Let $D$ be an integral domain and let $\Theta$ be a Jaffard family of $D$. Let $E\subseteq K$ be a subset such that $dE\subseteq D$ for some $d\neq 0$. Then,
\begin{equation*}
\locpic(\Int(E,D),D)\simeq\bigoplus_{T\in\Theta}\locpic(\Int(E,T),T)
\end{equation*}
and
\begin{equation*}
\locpic(\Int(D),D)\simeq\bigoplus_{T\in\Theta}\locpic(\Int(T),T)
\end{equation*}
\end{cor}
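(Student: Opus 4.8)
The plan is to obtain both isomorphisms as applications of Corollary \ref{cor:exseq-inttype-jaffard}: the first by taking $R=\Int(E,D)$, the second by taking $R=\Int(D)$. In each case one must verify the three standing hypotheses of that corollary, namely that $R$ is a retract $D$-algebra, that it is pseudo-polynomial over $D$, and that $\Pic(RK)=(0)$. For the retract condition, since $E$ is nonempty I would fix some $i\in E$ and use the evaluation $\epsilon_i(f)=f(i)$; because $f(E)\subseteq D$ this maps $\Int(E,D)$ into $D$ and fixes the constant polynomials, so it is a retract, exactly as recorded in the Example of Section \ref{sect:retract} (for $\Int(D)$ one may simply evaluate at $0\in D$). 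For pseudo-polynomiality I would note that $\Int(E,D)\subseteq K[X]$ and that $\Int(E,D)\cap K=D$, since a constant polynomial lies in $\Int(E,D)$ precisely when it belongs to $D$; Proposition \ref{prop:pseudopol-pol} then applies directly.

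For the vanishing of $\Pic(RK)$ I would compute $RK$ explicitly. The hypothesis $dE\subseteq D$ gives $(dX)(e)=de\in D$ for all $e\in E$, so $dX\in\Int(E,D)$ and hence $X=d^{-1}(dX)\in\Int(E,D)K$; since $\Int(E,D)K$ is a $K$-algebra containing $X$ and contained in $K[X]$, we conclude $\Int(E,D)K=K[X]$ (the same holds for $\Int(D)$, which already contains $X$). As $K[X]$ is a principal ideal domain, $\Pic(RK)=\Pic(K[X])=(0)$. With all hypotheses in place, Corollary \ref{cor:exseq-inttype-jaffard} yields $\locpic(\Int(E,D),D)\simeq\bigoplus_{T\in\Theta}\locpic(\Int(E,D)T,T)$ and, analogously, $\locpic(\Int(D),D)\simeq\bigoplus_{T\in\Theta}\locpic(\Int(D)T,T)$.

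The remaining---and I expect the only genuinely delicate---step is to identify the compositum $RT$ appearing on the right with the correct ring of integer-valued polynomials over $T$: namely $\Int(E,D)T=\Int(E,T)$ for the first isomorphism and $\Int(D)T=\Int(T)$ for the second. In the first case the inclusion $\Int(E,D)T\subseteq\Int(E,T)$ is immediate, since any finite sum $\sum_i t_if_i$ with $f_i(E)\subseteq D$ and $t_i\in T$ still maps $E$ into $T$; the reverse inclusion is the assertion that extension by the flat overring $T$ commutes with the formation of integer-valued polynomials, and this is where the work lies. This is precisely the classical statement that localization commutes with $\Int$, which holds for the flat, locally finite overrings occurring in a Jaffard family and which I would invoke from \cite{intD} (and in its Jaffard-overring form used in \cite{PicInt}); in the second case the relevant identity $\Int(D)T=\Int(T)$ is the one already recorded in the introduction as $\Int(D_M)=\Int(D)D_M$, and it does not reduce to the first since the evaluation set also changes from $D$ to $T$. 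Granting these identifications, the two displayed isomorphisms follow at once.
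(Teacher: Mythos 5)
Your proof is correct and takes essentially the same route as the paper: verify that $\Int(E,D)$ is a retract, pseudo-polynomial $D$-algebra with $\Pic(\Int(E,D)K)=\Pic(K[X])=(0)$, apply Corollary \ref{cor:exseq-inttype-jaffard}, and defer the identifications $\Int(E,D)T=\Int(E,T)$ and $\Int(D)T=\Int(T)$ to \cite{PicInt} (and \cite{intD}), exactly as the paper does. The only cosmetic difference is that the paper first reduces to $E\subseteq D$ via the isomorphism $\Int(E,D)\simeq\Int(dE,D)$ and then uses $D[X]\subseteq\Int(E,D)$, whereas you treat general $E$ directly by observing $dX\in\Int(E,D)$; both arguments are sound.
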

\begin{proof}
Note that $\Int(E,D)\simeq\Int(dE,D)$, and thus we can suppose without loss of generality that $E\subseteq D$.

The ring $\Int(E,D)$ is a retract $D$-algebra since the evaluation in any $d\in E$ is a retract. Moreover, $D[X]\subseteq\Int(E,D)$, and thus $\Int(E,D)K=K[X]$; therefore, $\Pic(\Int(E,D)K)=(0)$, so that $\Pic_u(\Int(E,D),D)=\Pic(\Int(E,D))$ and $\locpic_u(\Int(E,D),D)=\locpic(\Int(E,D),D)$. Finally, $\Int(E,D)\cap K=D$ and thus $D$ is pseudo-polynomial by Proposition \ref{prop:pseudopol-pol}. By Theorem \ref{teor:exseq-inttype-jaffard}, we have
\begin{equation*}
\locpic(\Int(E,D),D)\simeq\bigoplus_{T\in\Theta}\locpic(\Int(E,D)T,T).
\end{equation*}
The equality $\Int(E,D)T=\Int(E,T)$ follows as in \cite[Section 3]{PicInt}.

If $E=D$, then $\Int(E,D)=\Int(D)$ and $\Int(D)T=\Int(T)$. The claim is proved.
\end{proof}

Let $x\in D$. The \emph{Bhargava ring} of $D$ with respect to $x$ is \cite{yeramian-bhargava}
\begin{equation*}
\mathbb{B}_x(D):=\{f\in K[X]\mid f(aX+x)\in D[X]\text{~for all~}a\in D\}.
\end{equation*}
\begin{cor}
Let $D$ be an integral domain and let $\Theta$ be a Jaffard family of $D$.Then,
\begin{equation*}
\locpic(\mathbb{B}_x(D),D)\simeq\bigoplus_{T\in\Theta}\locpic(\mathbb{B}_x(T),T).
\end{equation*}
\end{cor}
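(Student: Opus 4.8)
The plan is to verify that the Bhargava ring $\mathbb{B}_x(D)$ satisfies the three hypotheses of Corollary \ref{cor:exseq-inttype-jaffard}, namely that it is a pseudo-polynomial retract $D$-algebra with $\Pic(\mathbb{B}_x(D)K)=(0)$, and then to identify the extension $\mathbb{B}_x(D)T$ with $\mathbb{B}_x(T)$. Once these are established, the isomorphism follows immediately by applying Corollary \ref{cor:exseq-inttype-jaffard}, exactly as in the proofs of the two preceding corollaries.

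First I would check the retract property. Since $x\in D$, the evaluation homomorphism $\epsilon_x\colon f(X)\mapsto f(x)$ sends $K[X]$ to $K$; I must verify that its restriction to $\mathbb{B}_x(D)$ lands in $D$. Indeed, taking $a=0$ in the defining condition $f(aX+x)\in D[X]$ gives $f(x)\in D$, so $\epsilon_x$ is a $D$-algebra homomorphism $\mathbb{B}_x(D)\to D$ and hence a retract. Next, pseudo-polynomiality: since $\mathbb{B}_x(D)\subseteq K[X]$ by definition, and since $D[X]\subseteq\mathbb{B}_x(D)$ (every polynomial with coefficients in $D$ clearly satisfies the defining condition), I have $\mathbb{B}_x(D)\cap K=D$; Proposition \ref{prop:pseudopol-pol} then gives pseudo-polynomiality over $D$. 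The same containment $D[X]\subseteq\mathbb{B}_x(D)\subseteq K[X]$ yields $\mathbb{B}_x(D)K=K[X]$, which is a unique factorization domain, so $\Pic(\mathbb{B}_x(D)K)=(0)$.

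The step I expect to be the main obstacle is the localization identity $\mathbb{B}_x(D)T=\mathbb{B}_x(T)$ for each Jaffard overring $T\in\Theta$; this is the analogue of the equalities $\Int(E,D)T=\Int(E,T)$ and $\Int(D)T=\Int(T)$ used in the previous corollary, which were cited to \cite[Section 3]{PicInt}. The containment $\mathbb{B}_x(D)T\subseteq\mathbb{B}_x(T)$ is straightforward, since elements of both $\mathbb{B}_x(D)$ and $T$ satisfy the relevant integrality condition over $T$, and the defining condition is preserved under the $T$-algebra structure. The reverse containment requires the flatness and the good behaviour of content (or of the defining substitution $X\mapsto aX+x$) under localization at the overrings in $\Theta$; I would argue locally, using that $\Theta$ is a Jaffard family so that it suffices to control the behaviour prime-by-prime, together with the fact that a flat overring commutes with the finite $D$-module operations implicit in the condition $f(aX+x)\in D[X]$. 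Granting this identity, the conclusion
\begin{equation*}
\locpic(\mathbb{B}_x(D),D)\simeq\bigoplus_{T\in\Theta}\locpic(\mathbb{B}_x(D)T,T)=\bigoplus_{T\in\Theta}\locpic(\mathbb{B}_x(T),T)
\end{equation*}
is precisely what Corollary \ref{cor:exseq-inttype-jaffard} delivers.
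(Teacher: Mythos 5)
Your verification of the hypotheses of Corollary \ref{cor:exseq-inttype-jaffard} is correct and agrees with the paper's proof: taking $a=0$ in the defining condition shows that evaluation at $x$ maps $\mathbb{B}_x(D)$ into $D$, the chain $D[X]\subseteq\mathbb{B}_x(D)\subseteq K[X]$ together with $\mathbb{B}_x(D)\cap K=D$ gives pseudo-polynomiality via Proposition \ref{prop:pseudopol-pol}, and $\mathbb{B}_x(D)K=K[X]$ gives $\Pic(\mathbb{B}_x(D)K)=(0)$. The genuine gap is the step you yourself flag as the main obstacle: the identity $\mathbb{B}_x(D)T=\mathbb{B}_x(T)$ is asserted rather than proved, and the strategy you sketch for it would not close.

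Both directions are problematic in your sketch. The containment $\mathbb{B}_x(D)T\subseteq\mathbb{B}_x(T)$ is not a formality, because the defining condition of $\mathbb{B}_x(T)$ quantifies over all $a\in T$, a strictly larger set than $D$: knowing $f(aX+x)\in D[X]$ for all $a\in D$ does not by itself give $f(aX+x)\in T[X]$ for all $a\in T$. (It is true, but one must use that a flat overring is a sublocalization: writing $f(X+x)=\sum_j d_jX^j$, one has $f(tX+x)=\sum_j d_jt^jX^j$, and for each prime $Q$ of $T$ with $P=Q\cap D$ one writes $t=p/s\in D_P=T_Q$ with $p\in D$, $s\notin P$, so that $d_jt^j=d_jp^j/s^j\in D_P$; intersecting over the maximal ideals of $T$ gives $f\in\mathbb{B}_x(T)$.) More seriously, for the reverse containment your plan to ``argue locally, prime-by-prime'' is circular: $\mathbb{B}_x(D)$ is cut out by infinitely many coefficient conditions, and localization does not commute with infinite intersections, so comparing $\mathbb{B}_x(D)T$ with $\mathbb{B}_x(T)$ at each prime presupposes exactly the kind of localization statement you are trying to prove; the substitution $f\mapsto f(aX+x)$ is not a finite $D$-module operation to which flatness of $T$ can be applied. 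The paper instead argues globally through the Jaffard structure: with $T^\perp$ the orthogonal of $T$, it invokes \cite[Lemma 1.1]{bhargava-spec} --- applicable because $T$ and $T^\perp$ are sublocalizations with $D=T\cap T^\perp$ --- to obtain $\mathbb{B}_x(D)=\mathbb{B}_x(T)\cap\mathbb{B}_x(T^\perp)$, then distributes extension by $T$ over this \emph{finite} intersection using Lemma \ref{lemma:flatintersect}, and finally uses $TT^\perp=K$ to get $\mathbb{B}_x(T^\perp)T=K[X]$, whence $\mathbb{B}_x(D)T=\mathbb{B}_x(T)\cap K[X]=\mathbb{B}_x(T)$. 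Some such global input on Bhargava rings of finite intersections of sublocalizations (or an equivalent substitute) is indispensable; without it your argument does not go through.
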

\begin{proof}
The Bhargava ring $\mathbb{B}_x(D)$ satisfies $D[X]\subseteq\mathbb{B}_x(D)\subseteq K[X]$, and $\mathbb{B}_x(D)\cap K=D$; therefore, it is pseudo-polynomial and $\Pic(\mathbb{B}_x(D)K)=(0)$. Moreover, $\mathbb{B}_x(D)\subseteq\Int(x,D)$ and thus $\mathbb{B}_x(D)$ is also a retract $D$-algebra. By Corollary \ref{cor:exseq-inttype-jaffard}, we have $\locpic(\mathbb{B}_x(D),D)\simeq\bigoplus_{T\in\Theta}\locpic(\mathbb{B}_x(D)T,T)$, and we need to show that $\mathbb{B}_x(D)T=\mathbb{B}_x(T)$.

Let $T^\perp$ be the orthogonal of $T$ with respect to $D$. By \cite[Lemma 1.1]{bhargava-spec}, since $T$ and $T^\perp$ are sublocalizations and $D=T\cap T^\perp$ we have $\mathbb{B}_x(D)=\mathbb{B}_x(T)\cap\mathbb{B}_x(T^\perp)$; by Lemma \ref{lemma:flatintersect} it follows that
\begin{equation*}
\mathbb{B}_x(D)T=(\mathbb{B}_x(T)\cap\mathbb{B}_x(T^\perp))T=\mathbb{B}_x(T)T\cap\mathbb{B}_x(T^\perp)T.
\end{equation*}
Since $\mathbb{B}_x(T)$ is a $T$-algebra we have $\mathbb{B}_x(T)T=\mathbb{B}_x(T)\subseteq K[X]$. Moreover, $\mathbb{B}_x(T^\perp)T=\mathbb{B}_x(T^\perp)T^\perp T=\mathbb{B}_x(T^\perp)K=K[X]$. Thus $\mathbb{B}_x(T)T\subseteq\mathbb{B}_x(T^\perp)T$ and $\mathbb{B}_x(D)T=\mathbb{B}_x(T)$. The claim is proved.
\end{proof}

\begin{comment}
NON FUNZIONA E PROBABILMENTE NON PUò FUNZIONARE
\begin{cor}
Let $D$ be an integral domain containing $\insQ$, and let $X$ be an indeterminate over $D$. Then,
\begin{equation*}
\locpic(D[X],D])\simeq\bigoplus_{p\in\mathbb{P}}\Pic_u(\insZ_{(p)}+XD[X]),
\end{equation*}
where $\mathbb{P}$ is the set of prime numbers.
\end{cor}
\begin{proof}
By \cite[Theorem 3.8]{pic-RX-seminormal}, $\locpic(D[X],D)\simeq\Pic(R)$, where $R:=\insZ+XD[X]$. As a $\insZ$-algebra, $R$ is retract (when endowed with the evaluation in $0$) and pseudo-polynomial (since is $fR$ meets $\insQ$ then $f$ must be a constant, i.e., an element of $\insZ$). Applying Theorem \ref{teor:exseq-inttype-jaffard} to the Jaffard family $\Theta:=\{\insZ_{(p)}\mid p\in\mathbb{P}\}$ of $\insZ$ we have
\begin{equation*}
\Pic(R)=\locpic(R,\insZ)\simeq\bigoplus_{p\in\mathbb{P}}\locpic(R\insZ_{(p)},\insZ_{(p)})=\bigoplus_{p\in\mathbb{P}}\Pic(R\insZ_{(p)}).
\end{equation*}
Moreover, $R\insZ_{(p)}=(\insZ+XD[X])\insZ_{(p)}=\insZ_{(p)}+XD\insZ_{(p)}[X]$. Since $\insQ\subseteq D$, we have $D\insZ_{(p)}=D$. The clam is proved.
\end{proof}
\end{comment}

\begin{cor}\label{cor:locfin-dim1}
Let $D$ be a locally finite one-dimensional domain, and let $R$ be a pseudo-polynomial retract $D$-algebra. Then,
\begin{equation*}
\locpic_u(R,D)\simeq\bigoplus_{M\in\Max(D)}\Pic_u(RD_M,D_M),
\end{equation*}
and
\begin{equation*}
\Pic_u(R,D)\simeq\Pic(D)\oplus\bigoplus_{M\in\Max(D)}\Pic_u(RD_M,D_M),
\end{equation*}
\end{cor}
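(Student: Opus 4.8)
The plan is to deduce Corollary \ref{cor:locfin-dim1} from Theorem \ref{teor:exseq-inttype-jaffard} by choosing the right Jaffard family. For a locally finite one-dimensional domain $D$, the natural candidate is the family of localizations $\Theta:=\{D_M\mid M\in\Max(D)\}$. First I would verify that this $\Theta$ is genuinely a Jaffard family of $D$: each $D_M$ is a flat overring; completeness ($I=\bigcap_M ID_M$) is the standard statement that an ideal equals the intersection of its localizations; independence ($D_MD_{M'}=K$ for $M\neq M'$) holds because, in a one-dimensional domain, two distinct maximal ideals are comaximal and the product of the two localizations inverts everything outside both, yielding $K$; and local finiteness is exactly the hypothesis that $D$ is locally finite (each nonzero $x$ lies in only finitely many maximal ideals). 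Finally $K\notin\Theta$ since $D$ is not a field. This is the family used in the typical example quoted in Section \ref{sect:prelim:jaffard}, so I expect this verification to be routine.

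With $\Theta$ identified, I would apply Theorem \ref{teor:exseq-inttype-jaffard} directly. The hypotheses on $R$ (pseudo-polynomial retract $D$-algebra) are exactly those assumed in the corollary, so the theorem yields
\begin{equation*}
\locpic_u(R,D)\simeq\bigoplus_{T\in\Theta}\locpic_u(RT,T)=\bigoplus_{M\in\Max(D)}\locpic_u(RD_M,D_M).
\end{equation*}
The remaining gap is that the corollary states the right-hand summands as $\Pic_u(RD_M,D_M)$ rather than $\locpic_u(RD_M,D_M)$. To close it I would show $\locpic_u(RD_M,D_M)=\Pic_u(RD_M,D_M)$ for each $M$, which amounts to showing that the canonical image $\iota(\Pic(D_M))$ is trivial. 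But $D_M$ is local, hence $\Pic(D_M)=(0)$, so $\locpic_u(RD_M,D_M)=\Pic_u(RD_M,D_M)/\iota(\Pic(D_M))=\Pic_u(RD_M,D_M)$. This identification is the only substantive (and quite easy) step particular to the one-dimensional local setting.

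For the second isomorphism I would invoke Proposition \ref{prop:retract-extension} to note that $RD_M$ is a retract $D_M$-algebra, so that Proposition \ref{prop:locpic-directsum}(d) applies to the pair $D\subseteq R$ and gives the splitting
\begin{equation*}
\Pic_u(R,D)\simeq\Pic(D)\oplus\locpic_u(R,D).
\end{equation*}
Combining this with the first isomorphism and the identification of the summands yields
\begin{equation*}
\Pic_u(R,D)\simeq\Pic(D)\oplus\bigoplus_{M\in\Max(D)}\Pic_u(RD_M,D_M),
\end{equation*}
as claimed. The main obstacle, to the extent there is one, is simply confirming that $\{D_M\}$ forms a Jaffard family for a locally finite one-dimensional domain; everything after that is a direct specialization of the Jaffard-family theorem together with the vanishing of the Picard group of a local ring.
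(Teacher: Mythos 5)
Your proposal is correct and follows essentially the same route as the paper: apply Theorem \ref{teor:exseq-inttype-jaffard} to the Jaffard family $\Theta=\{D_M\mid M\in\Max(D)\}$, use $\Pic(D_M)=(0)$ to identify $\locpic_u(RD_M,D_M)$ with $\Pic_u(RD_M,D_M)$, and obtain the second isomorphism from Proposition \ref{prop:locpic-directsum}. The only superfluous step is your appeal to Proposition \ref{prop:retract-extension}: the splitting $\Pic_u(R,D)\simeq\Pic(D)\oplus\locpic_u(R,D)$ needs only that $R$ itself is a retract $D$-algebra, which is a standing hypothesis.
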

\begin{proof}
The first isomorphism follows from Theorem \ref{teor:exseq-inttype-jaffard} using the family $\Theta:=\{D_M\mid M\in\Max(D)\}$ (which is a Jaffard family since $D$ is one-dimensional and locally finite), and the fact that $\Pic(D_M)=(0)$ since each $D_M$ is local. The second isomorphism now follows from Proposition \ref{prop:locpic-directsum}.
\end{proof}

\begin{cor}\label{cor:PicD0}
Let $D$ be an integral domain and let $\Theta$ be a Jaffard family of $D$; let $R$ be a pseudo-polynomial retract $D$-algebra. If $\Pic(D)=(0)$, then
\begin{equation*}
\Pic_u(R,D)\simeq\bigoplus_{T\in\Theta}\Pic_u(RT,T).
\end{equation*}
\end{cor}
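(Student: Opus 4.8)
The plan is to obtain the result as an immediate specialization of Theorem \ref{teor:exseq-inttype-jaffard}. The hypotheses of that theorem---that $D$ is an integral domain, that $R$ is a pseudo-polynomial retract $D$-algebra, and that $\Theta$ is a Jaffard family of $D$---are precisely the standing hypotheses of the present corollary, so I would apply it directly to produce the exact sequence
\begin{equation*}
0\longrightarrow\Pic(D,\Theta)\longrightarrow\Pic_u(R,D)\xrightarrow{~~\pi_\Theta~~}\bigoplus_{T\in\Theta}\Pic_u(RT,T)\longrightarrow 0.
\end{equation*}

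The only remaining step is to identify the leftmost term. By its definition (as used in the proofs of Theorems \ref{teor:exseq-inttype} and \ref{teor:exseq-inttype-jaffard}), $\Pic(D,\Theta)$ is the subgroup of $\Pic(D)$ consisting of those classes $[I]$ for which $IT$ is principal for every $T\in\Theta$; in particular it is contained in $\Pic(D)$. Since the hypothesis $\Pic(D)=(0)$ forces $\Pic(D,\Theta)=(0)$, the exact sequence collapses to
\begin{equation*}
0\longrightarrow\Pic_u(R,D)\xrightarrow{~~\pi_\Theta~~}\bigoplus_{T\in\Theta}\Pic_u(RT,T)\longrightarrow 0,
\end{equation*}
so that $\pi_\Theta$ is simultaneously injective and surjective, hence an isomorphism. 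This yields the desired $\Pic_u(R,D)\simeq\bigoplus_{T\in\Theta}\Pic_u(RT,T)$.

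I anticipate no genuine obstacle: the entire substance has already been packaged into Theorem \ref{teor:exseq-inttype-jaffard}, and all this corollary adds is the trivial observation that the kernel subgroup, being a subgroup of $\Pic(D)$, must vanish once $\Pic(D)$ itself is assumed trivial. The proof is therefore a one-line deduction, and the care required is merely bookkeeping---checking that the definition of $\Pic(D,\Theta)$ indeed places it inside $\Pic(D)$, which is transparent from how it enters the exact sequence of Theorem \ref{teor:exseq-inttype}.
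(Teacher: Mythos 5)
Your proof is correct, but it goes through a different half of Theorem \ref{teor:exseq-inttype-jaffard} than the paper does. You invoke the exact sequence
\begin{equation*}
0\longrightarrow\Pic(D,\Theta)\longrightarrow\Pic_u(R,D)\xrightarrow{~~\pi_\Theta~~}\bigoplus_{T\in\Theta}\Pic_u(RT,T)\longrightarrow 0
\end{equation*}
and observe that $\Pic(D,\Theta)$ is a subgroup of $\Pic(D)$ (your reading of the definition is right: it is the group of classes $[I]\in\Pic(D)$ with $IT$ principal for all $T\in\Theta$, which is exactly how it enters the proof of Theorem \ref{teor:exseq-inttype}), hence vanishes, so $\pi_\Theta$ is an isomorphism. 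The paper instead uses the second conclusion of the same theorem, the isomorphism $\locpic_u(R,D)\simeq\bigoplus_{T\in\Theta}\locpic_u(RT,T)$, together with the observation that $\Pic(D)=(0)$ forces $\Pic(T)=(0)$ for every Jaffard overring $T$ of $D$; this makes $\locpic_u(R,D)=\Pic_u(R,D)$ and $\locpic_u(RT,T)=\Pic_u(RT,T)$, so the local Picard isomorphism is literally the one claimed. The two deductions are equally short, but they lean on different auxiliary facts: yours needs only the transparent inclusion $\Pic(D,\Theta)\subseteq\Pic(D)$, whereas the paper's needs the vanishing of $\Pic(T)$ for each $T\in\Theta$, which is a standard consequence of the surjectivity of $\Pic(D)\longrightarrow\bigoplus_{T\in\Theta}\Pic(T)$ for Jaffard families (the top row of the diagram in the proof of Theorem \ref{teor:exseq-inttype-jaffard}) but is one step further from the statement being quoted. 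In that sense your argument is marginally more self-contained; the paper's phrasing has the advantage of passing through the local Picard group statement, which is the form specialized in the neighboring corollaries.
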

\begin{proof}
If $\Pic(D)=(0)$, then $\Pic(T)=(0)$ for every Jaffard overring $T$; thus, $\locpic_u(R,D)=\Pic_u(R,D)$ and $\locpic_u(RT,T)=\Pic_u(RT,T)$.
\end{proof}

\begin{cor}
Let $D$ be a locally finite Pr\"ufer domain of dimension $1$. Then,
\begin{equation*}
\Pic_u(D[[X]],D)\simeq\Pic(D)\oplus\bigoplus_{M\in\Max(D)}\Pic_u(D[[X]]D_M,D_M),
\end{equation*}
\end{cor}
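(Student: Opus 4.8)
The plan is to recognize this corollary as an immediate specialization of Corollary~\ref{cor:locfin-dim1}. That corollary asserts precisely the desired splitting for any pseudo-polynomial retract $D$-algebra $R$ over a locally finite one-dimensional domain $D$, so the entire task reduces to checking that the two structural hypotheses are met by $R=D[[X]]$, after which we simply read off the second isomorphism of Corollary~\ref{cor:locfin-dim1} with $R=D[[X]]$.

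First I would verify the hypotheses on the base. By assumption $D$ is a Pr\"ufer domain of dimension $1$, hence in particular it is one-dimensional, and it is locally finite by hypothesis; thus the family $\{D_M\mid M\in\Max(D)\}$ is a Jaffard family of $D$, which is exactly the setting in which Corollary~\ref{cor:locfin-dim1} operates.

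Next I would verify the hypotheses on $R=D[[X]]$. The power series ring is a retract $D$-algebra via evaluation at $0$ (the map sending $f_0+f_1X+\cdots$ to its constant term $f_0$), as recorded in the examples of Section~\ref{sect:retract}. For pseudo-polynomiality I would invoke Proposition~\ref{prop:pseudopol-powseries}: since $D$ is a Pr\"ufer domain of dimension $1$, that proposition gives directly that $D[[X]]$ is pseudo-polynomial over $D$. With both properties in hand, $R=D[[X]]$ is a pseudo-polynomial retract $D$-algebra, so every hypothesis of Corollary~\ref{cor:locfin-dim1} is satisfied.

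Applying the second displayed isomorphism of Corollary~\ref{cor:locfin-dim1} to $R=D[[X]]$ then yields
\begin{equation*}
\Pic_u(D[[X]],D)\simeq\Pic(D)\oplus\bigoplus_{M\in\Max(D)}\Pic_u(D[[X]]D_M,D_M),
\end{equation*}
which is the assertion. There is essentially no obstacle to surmount beyond the two verifications above; the only point warranting care is that the summands must be written as $\Pic_u(D[[X]]D_M,D_M)$ and \emph{not} as $\Pic_u(D_M[[X]],D_M)$, since localization does not commute with the formation of power series rings, so $D[[X]]D_M$ need not equal $D_M[[X]]$. The statement is framed with $D[[X]]D_M$ precisely to avoid this conflation, and the proof respects that distinction by leaving the localized algebra unsimplified.
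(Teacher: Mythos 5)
Your proof is correct and follows exactly the paper's own argument: invoke Proposition~\ref{prop:pseudopol-powseries} for pseudo-polynomiality, note that evaluation at $0$ makes $D[[X]]$ a retract $D$-algebra, and apply Corollary~\ref{cor:locfin-dim1}. Your closing caveat that $D[[X]]D_M$ must not be conflated with $D_M[[X]]$ is also the very remark the paper makes alongside this corollary.
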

Note that $D[[X]]D_M$ is \emph{not} the ring $D_M[[X]]$ of power series over $D_M$: for example, $\sum_n X^n/3^n$ belongs to $\insZ_{(2)}[[X]]$ but not to $\insZ[[X]]\insZ_{(2)}$.
\begin{proof}
The ring $D[[X]]$ is pseudo-polynomial over $D$ by Proposition \ref{prop:pseudopol-powseries} and a retract $D$-algebra (with $\epsilon$ being the evaluation in $0$). The claim now follows from Corollary \ref{cor:locfin-dim1}.
\end{proof}

\begin{oss}
Theorems \ref{teor:exseq-inttype} and \ref{teor:exseq-inttype-jaffard} do not hold for general $D$-algebras. For example, let $D=\insZ$ and let $R$ be the integral closure of $\insZ$ in a finite extension $L$ of $\insQ$. Note that $\Pic_u(R,\insZ)=\Pic(R)$ since $RK=L$. Let $\Theta$ be the family of localizations of $\insZ$ at the maximal ideals. Then, for every $T=\insZ_{(p)}\in\Theta$, the ring $RT$ is semilocal (its maximal ideals correspond to the maximal ideals of $R$ over $(p)$, which are finite since $[L:\insQ]<\infty$), and thus $\Pic(RT)=(0)$ for every $T$, and thus also $\locpic(RT,T)=(0)$. On the other hand, $\Pic(D,\Theta)=(0)$, and thus \eqref{eq:exseq-inttype} becomes
\begin{equation*}
0\longrightarrow 0\longrightarrow\Pic(R)\longrightarrow0.
\end{equation*}
If $R$ does not have unique factorization, $\Pic(R)\neq(0)$ and thus the sequence is not exact. Likewise, $\locpic(R,D)=\Pic(R)\neq(0)$, and thus the isomorphism $\locpic(R,D)\simeq\bigoplus_{T\in\Theta}\locpic(RT,T)$ is not true.
\end{oss}

\section{Pre-Jaffard families}\label{sect:preJaff}
The results in the previous section only deal with Jaffard families. As done in \cite{PicInt}, under some hypothesis we can extend the results to the more general case of pre-Jaffard families.

We start with an analogue of \cite[Proposition 7.2]{PicInt}, of which we follow the proof.
\begin{lemma}\label{lemma:picTalpha-surj}
Let $D$ be an integral domain and $\Theta$ be a pre-Jaffard family of $D$; let $\{T_\alpha\}$ be the derived sequence associated to $\Theta$. Let $R$ be a pseudo-polynomial retract $D$-algebra. Then, the extension map $\Pic_u(R,D)\longrightarrow\Pic_u(RT_\alpha,T_\alpha)$ is surjective.
\end{lemma}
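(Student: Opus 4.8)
The plan is to argue by transfinite induction on $\alpha$. Before starting, I would record the enabling observation that each $RT_\gamma$ is again a \emph{pseudo-polynomial retract} $T_\gamma$-algebra: being a retract is immediate from Proposition~\ref{prop:retract-extension}, and (using that a retract forces $R\cap K=D$ and, applied to $RT_\gamma$, also $RT_\gamma\cap K=T_\gamma$ by Proposition~\ref{prop:retract}\ref{prop:retract:capK}) pseudo-polynomiality is inherited as follows. If $f\in RT_\gamma\setminus T_\gamma$ and $fRT_\gamma$ is unitary, then clearing denominators produces $d\in D$ with $df\in R\setminus D$ (if $df\in D$ then $f=df/d\in RT_\gamma\cap K=T_\gamma$), and a witness $0\neq w\in fRT_\gamma\cap K$ gives $d^2w=(df)(dh)\in R\cap K=D$ after enlarging $d$; pseudo-polynomiality of $R$ then makes $df$ associated in $R$ to some $e\in D$, whence $f$ is associated in $RT_\gamma$ to $u^{-1}f=e/d\in RT_\gamma\cap K=T_\gamma$. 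The base case $\alpha=0$ is trivial, since $T_0=D$ and the map is the identity.

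For the successor step $\alpha=\gamma+1$, I would exploit that $\njaff^\gamma(\Theta)$ is a pre-Jaffard family of $T_\gamma$ and that, by the one-step structure of the derived sequence in \cite{jaff-derived}, the Jaffard members $\Lambda\subseteq\njaff^\gamma(\Theta)$ together with $T_{\gamma+1}=\bigcap\njaff^{\gamma+1}(\Theta)$ form a Jaffard family of $T_\gamma$. Applying Theorem~\ref{teor:exseq-inttype-jaffard} to the pseudo-polynomial retract $T_\gamma$-algebra $RT_\gamma$ and this Jaffard family yields a surjection of $\Pic_u(RT_\gamma,T_\gamma)$ onto $\bigoplus_S\Pic_u(RT_\gamma S,S)$; projecting onto the component $S=T_{\gamma+1}$ (and using $T_\gamma T_{\gamma+1}=T_{\gamma+1}$) shows the one-step extension map $\Pic_u(RT_\gamma,T_\gamma)\longrightarrow\Pic_u(RT_{\gamma+1},T_{\gamma+1})$ is surjective. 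Composing with the inductive hypothesis, and with transitivity of ideal extension, gives surjectivity at stage $\gamma+1$.

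The limit case is where I expect the real work, and the main obstacle is that $T_\alpha$ need not equal $\bigcup_{\gamma<\alpha}T_\gamma$ a priori: the inclusion $\bigcup_{\gamma<\alpha}T_\gamma\subseteq T_\alpha$ is formal, but the reverse is exactly what makes descent possible. I would obtain it from compactness: for $x\in T_\alpha$ the nested family of closed sets $\njaff^\gamma(\Theta)\setminus\B(x)$ has empty intersection (because $\njaff^\alpha(\Theta)\subseteq\B(x)$), so by compactness of the pre-Jaffard family some $\njaff^\gamma(\Theta)$ is already contained in $\B(x)$, i.e.\ $x\in T_\gamma$. Hence $T_\alpha=\bigcup_{\gamma<\alpha}T_\gamma$ and $RT_\alpha=\bigcup_{\gamma<\alpha}RT_\gamma$. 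With this in hand, given $[J]\in\Pic_u(RT_\alpha,T_\alpha)$, Corollary~\ref{cor:IKprinc} lets me take $J$ integral, unitary and (being invertible) finitely generated; its finitely many generators, a nonzero element of $J\cap K$, and a finite invertibility relation $1=\sum_k j_kj_k'$ witnessing $J(RT_\alpha:J)=RT_\alpha$ all lie in a single $RT_\gamma$ with $\gamma<\alpha$. The ideal $J_\gamma$ they generate over $RT_\gamma$ is then integral, unitary and invertible, with $J_\gamma\cdot RT_\alpha=J$, so $[J_\gamma]\in\Pic_u(RT_\gamma,T_\gamma)$ extends to $[J]$; the inductive hypothesis supplies a preimage of $[J_\gamma]$ in $\Pic_u(R,D)$, closing the induction. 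The only point demanding care beyond the compactness argument is verifying that $J_\gamma$ is genuinely invertible at stage $\gamma$, which is precisely what descending the relation $1=\sum_k j_kj_k'$ guarantees.
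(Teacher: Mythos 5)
Your preliminary reduction (that $RT_\gamma$ is again a pseudo-polynomial retract $T_\gamma$-algebra) is correct, and it is a point the paper uses without comment; your limit-ordinal step is also essentially the paper's own argument, namely the directed-union descent of \cite[Lemma 5.1]{PicInt} combined with the equality $T_\alpha=\bigcup_{\gamma<\alpha}T_\gamma$ (which the paper quotes from \cite[Lemma 7.1]{PicInt}; note that your compactness sketch for this equality tacitly assumes that each $\njaff^\gamma(\Theta)$ is \emph{closed} in $\Theta$, not merely quasi-compact, which is more than what is known). The genuine gap is the successor step. The claim that the Jaffard members $\Lambda$ of $\njaff^\gamma(\Theta)$ together with $T_{\gamma+1}$ form a Jaffard family of $T_\gamma$ is false in general, and no such statement is available in \cite{jaff-derived}. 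A Jaffard family must be locally finite, and this family need not be: take $\gamma=0$ and let $\Theta$ be a weak Jaffard family pointed at a $T_\infty$ that is \emph{not} a Jaffard overring of $D$ (such families exist; otherwise the notion of weak Jaffard family would collapse to that of Jaffard family and the whole derived-sequence machinery would be vacuous). Then $\Lambda=\Theta\setminus\{T_\infty\}$, $\njaff^1(\Theta)=\{T_\infty\}$ and $T_1=T_\infty$, so your proposed family is $\Theta$ itself; if $\Theta$ were a Jaffard family, every member, including $T_\infty$, would by definition be a Jaffard overring, a contradiction. (For larger $\gamma$ there is the further problem that $T_{\gamma+1}$, an intersection of possibly infinitely many members, is not known to be flat over $T_\gamma$.) Consequently Theorem \ref{teor:exseq-inttype-jaffard} cannot be invoked, and the surjectivity of the one-step map $\Pic_u(RT_\gamma,T_\gamma)\longrightarrow\Pic_u(RT_{\gamma+1},T_{\gamma+1})$ is not established.

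The repair is the paper's route, and it uses only a tool you already deploy at limit ordinals. Instead of a Jaffard family containing $T_{\gamma+1}$, one writes $T_{\gamma+1}$ as the \emph{directed union} $\bigcup\{S\mid S\in\mathcal{L}\}$, where $\mathcal{L}$ is the lattice of Jaffard overrings of $T_\gamma$ (proof of \cite[Proposition 6.1]{PicInt}), whence $RT_{\gamma+1}=\bigcup\{RT_\gamma S\mid S\in\mathcal{L}\}$. For a single Jaffard overring $S$ of $T_\gamma$, the pair $\{S,S^\perp\}$ \emph{is} a Jaffard family of $T_\gamma$, so Theorem \ref{teor:exseq-inttype-jaffard}, applied to the pseudo-polynomial retract $T_\gamma$-algebra $RT_\gamma$, gives surjectivity of $\Pic_u(RT_\gamma,T_\gamma)\longrightarrow\Pic_u(RT_\gamma S,S)$; then precisely your finite-descent argument (descend the generators, a nonzero element of $J\cap K$, and the invertibility relation into some $RT_\gamma S$) transfers surjectivity to the directed union. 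With the successor step repaired in this way, your induction coincides with the paper's proof.
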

\begin{proof}
By induction on $\alpha$. If $\alpha=1$, let $\mathcal{L}$ be the lattice of Jaffard overrings of $D$. By the proof of \cite[Proposition 6.1]{PicInt}, we have $T=\bigcup\{S\mid S\in\mathcal{L}\}$; using the same proof of \cite[Lemma 5.2]{PicInt}, it follows that $RT=\bigcup\{RS\mid S\in\mathcal{L}\}$. Since for $S\in\Theta\setminus\{T\}$ the map $\Pic_u(R,D)\longrightarrow\Pic_u(RS,S)$ is surjective (Theorem \ref{teor:exseq-inttype-jaffard}), the same reasoning of the proof of \cite[Lemma 5.1]{PicInt} shows that also $\Pic_u(R,D)\longrightarrow\Pic_u(RT,T)$ is surjective.

If $\alpha$ is a limit ordinal, the claim follows in the same way since $\bigcup_{\gamma<\alpha} T_\gamma=T_\alpha$ \cite[Lemma 7.1]{PicInt} and thus $\bigcup_{\gamma<\alpha} RT_\gamma=RT_\alpha$; hence we can apply \cite[Lemma 5.1]{PicInt}. If $\alpha=\gamma+1$ is a successor ordinal, then the map $\Pic_u(R,D)\longrightarrow\Pic_u(RT_\alpha,T_\alpha)$ factors as
\begin{equation*}
\Pic_u(R,D)\longrightarrow\Pic_u(RT_\gamma,T_\gamma)\longrightarrow\Pic_u(RT_\alpha,T_\alpha);
\end{equation*}
the first map is surjective by inductive hypothesis, while the second one is surjective since we can apply the case $\alpha=1$ to the $T_\gamma$-algebra $RT_\gamma$.
\end{proof}

The following result is the analogue of Proposition 6.2 and Theorem 6.4 of \cite{PicInt}. We premise a lemma, that was implicitly used in the proof of \cite[Proposition 6.2]{PicInt}.
\begin{lemma}\label{lemma:numfin-nonprinc}
Let $\Theta$ be a weak Jaffard family of $D$ pointed at $T_\infty$. Let $I$ be a finitely generated ideal of $D$ such that $IT_\infty$ is principal. Then, there are only finitely many $T\in\Theta$ such that $IT$ is not principal.
\end{lemma}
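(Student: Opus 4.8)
The plan is to exhibit, for all but finitely many $T\in\Theta$, a single element generating $IT$, and to control the exceptions via the compactness of $\Theta$. Since $IT_\infty$ is principal, I would fix $\theta\in K$ with $IT_\infty=\theta T_\infty$ and write $I=(a_1,\dots,a_n)$. Two facts follow immediately from $IT_\infty=\theta T_\infty$: each quotient $a_i/\theta$ lies in $T_\infty$ (because $a_i\in IT_\infty=\theta T_\infty$), and $\theta$ can be written as $\theta=\sum_i c_ia_i$ with $c_i\in T_\infty$ (because $\theta\in IT_\infty$). Thus the finitely many elements $a_1/\theta,\dots,a_n/\theta,c_1,\dots,c_n$ all belong to $T_\infty$. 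The key observation is that whenever a member $T$ contains all of them, one has $IT=\theta T$: indeed $IT\subseteq\theta T$ because $a_i=\theta\,(a_i/\theta)\in\theta T$, and $\theta T\subseteq IT$ because $\theta=\sum_i c_ia_i\in IT$. So it suffices to show that each of these $2n$ elements lies in all but finitely many members of $\Theta$.

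This reduces the lemma to the following claim: if $x\in T_\infty$, then $S_x:=\{T\in\Theta\mid x\notin T\}$ is finite. I would prove it topologically. By definition $S_x=\Theta\setminus\B(x)$, and $\B(x)$ is a basic open set, so $S_x$ is closed in $\Theta$; since $\Theta$ is a pre-Jaffard family it is compact in the Zariski topology, hence $S_x$ is compact. Moreover $T_\infty\notin S_x$, because $x\in T_\infty$ by hypothesis. Now I would invoke the structure of a weak Jaffard family: every $T\neq T_\infty$ is a Jaffard overring, and by \cite{jaff-derived} the Jaffard members of a pre-Jaffard family are precisely its isolated points (this is exactly why $\njaff^1(\Theta)$ plays the role of the Cantor--Bendixson derived set, and here $\njaff^1(\Theta)\subseteq\{T_\infty\}$). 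Hence every point of $S_x$ is isolated in $\Theta$, so the singletons $\{T\}$ with $T\in S_x$ form an open cover of the compact set $S_x$; passing to a finite subcover forces $S_x$ to be finite.

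Granting the claim, the set of $T$ for which at least one of $a_1/\theta,\dots,a_n/\theta,c_1,\dots,c_n$ fails to lie in $T$ is a finite union of finite sets, hence finite; for every $T$ outside it we have shown $IT=\theta T$, which is principal, and this is exactly the assertion. The step I expect to be the main obstacle is the topological input that the Jaffard members of $\Theta$ are exactly its isolated points, so that a compact subset of $\Theta$ avoiding $T_\infty$ is automatically finite; once this is available, the remainder is the elementary bookkeeping that the witnesses $a_i/\theta$ and $c_i$ genuinely lie in $T_\infty$ and that their simultaneous membership in $T$ yields $IT=\theta T$.
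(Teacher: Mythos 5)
Your first paragraph is sound and is exactly the paper's own first step: the witnesses $a_i/\theta$ and $c_i$ are the paper's $fx_i^{-1}$ and $t_i$, and the set of $T$ containing all of them is the basic open set $\Omega=\B(t_1,\ldots,t_n,fx_1^{-1},\ldots,fx_n^{-1})$, so that $T\in\Omega$ forces $IT=fT$. The gap is in the second half: the assertion that \emph{the Jaffard members of a pre-Jaffard family are precisely its isolated points} is false (and no such result is in \cite{jaff-derived}), and with it collapses your argument that a compact subset of $\Theta$ avoiding $T_\infty$ is finite. Counterexample: $D=\insZ$ and $\Theta=\{\insZ_{(p)}\mid p \text{ prime}\}$. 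This is a Jaffard family, hence pre-Jaffard and compact, and every member is a Jaffard overring of $\insZ$; yet for $x=a/b\in\insQ$ in lowest terms, $\B(x)\cap\Theta$ is the complement of the finite set $\{\insZ_{(q)}\mid q\mid b\}$, so every nonempty open subset of $\Theta$ is cofinite and \emph{no} point of $\Theta$ is isolated. The analogy between $\njaff^1(\Theta)$ and the Cantor--Bendixson derived set is structural (both are iterated transfinitely), not a literal topological identity: in this example $\njaff^1(\Theta)=\emptyset$, while the set of non-isolated points is all of $\Theta$. So ``every point of $S_x$ is isolated'' is unjustified, and compactness alone cannot finish the proof.

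The intermediate claim you reduced to (for $x\in T_\infty$, the closed set $S_x=\Theta\setminus\B(x)$ is finite) is in fact true, but it requires the substantive step that the paper uses in place of isolated points. Since $S_x$ is closed in the compact space $\Theta$ and $T_\infty\notin S_x$, every member of $S_x$ is a Jaffard overring of $D$; the real content — resting on \cite[Corollary 5]{compact-intersections} — is that $S_x$ is then a Jaffard family of $A:=\bigcap\{T\mid T\in S_x\}$, and hence \emph{locally finite} over $A$. Finiteness now follows from local finiteness, not from compactness: writing $x=a/b$ with $a,b\in D$, the element $b$ is a nonunit in every $T\in S_x$ (otherwise $x=ab^{-1}\in T$), so only finitely many such $T$ can exist. (The paper takes a slightly more economical route: it does not bound the whole exceptional closed set $\Lambda_0=\Theta\setminus\Omega$, but applies local finiteness of $\Lambda_0$ over $A$ to a nonzero element of $IA$, bounding only the set of $T$ with $IT$ non-principal; either way the engine is the Jaffard-family-over-the-intersection argument.) Your bookkeeping with the witnesses is correct, but the topological dichotomy you invoke must be replaced by this argument.
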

\begin{proof}
Without loss of generality we can suppose that $I\subseteq D$. Let $\Lambda$ be the set of all $T\in\Theta$ such that $IT$ is not principal.

Let $I=(x_1,\ldots,x_n)$. Suppose that $IT_\infty=fT_\infty$: then, there are $t_1,\ldots,t_n\in\ T_\infty$ such that $f=x_1t_1+\cdots+x_nt_n$. Consider the set $\Omega:=\B(t_1,\ldots,t_n,fx_1^{-1},\ldots,fx_n^{-1})\subseteq\Theta$ of all elements of $\Theta$ that contain each $t_i$ and each $fx_i^{-1}$: then, $\Omega$ is an open set with respect to the Zariski topology, and $T\in\Omega$ if and only if $IT=fT$. In particular, $T_\infty\in\Omega$; thus, $\Lambda_0:=\Theta\setminus\Omega$ is a closed set of $\Theta$ not containing $T_\infty$, and $\Lambda\subseteq\Lambda_0$.

Since $\Theta$ is compact in the Zariski topology and $\Lambda_0$ is closed, $\Lambda_0$ is compact. Let $A:=\bigcap\{T\mid T\in\Lambda_0\}$: then, each $T$ is a flat overring of $A$, and it is also a Jaffard overring of $A$ since each such $T$ is a Jaffard overring of $D$. If $P$ is a prime ideal of $D$ such that $PT=T$ for every $T\in\Lambda_0$ then
\begin{equation*}
AD_P=\left(\bigcap_{T\in\Lambda_0}T\right)D_P=\bigcap_{T\in\Lambda_0}TD_P=K
\end{equation*}
using \cite[Corollary 5]{compact-intersections}. Thus, $\Lambda_0$ is a Jaffard family of $A$, and in particular it is locally finite. Consider $IA$ and the extensions $IAT$ for $T\in\Lambda$: then, $IAT=IT$ is not principal for such $T$, and thus $IAT\neq T$. By local finiteness, $\Lambda_0$ must be finite, as claimed.
\end{proof}

\begin{prop}\label{prop:weakJaff}
Let $D$ be an integral domain and let $R$ be a pseudo-polynomial retract $D$-algebra. Let $\Theta$ be a weak Jaffard family of $D$ pointed at $T_\infty$. Let $\pi_\Theta:\Pic_u(R,D)\longrightarrow\prod\{\Pic_u(RT,T)\mid T\in\Theta\}$ be the extension map and let $\Delta$ be its cokernel. Then, there are exact sequences
\begin{equation*}
0\longrightarrow\bigoplus_{T\in\Theta\setminus\{T_\infty\}}\Pic_u(RT,T)\longrightarrow\Delta\longrightarrow\Pic_u(RT_\infty,T_\infty)\longrightarrow 0.
\end{equation*}
and
\begin{equation*}
0\longrightarrow\bigoplus_{T\in\Theta\setminus\{T_\infty\}}\locpic_u(RT,T)\longrightarrow\locpic_u(R,D)\longrightarrow\locpic_u(RT_\infty,T_\infty)\longrightarrow 0.
\end{equation*}
\end{prop}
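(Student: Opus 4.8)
The plan is to analyze the extension map $\pi_\Theta$ coordinate by coordinate to obtain the first sequence, and then to pass to the local Picard groups by a snake-lemma comparison with the ground ring, exactly as at the end of the proof of Theorem~\ref{teor:exseq-inttype-jaffard}. Since a pre-Jaffard family is complete, Theorem~\ref{teor:exseq-inttype} applies and gives $\ker\pi_\Theta=\Pic(D,\Theta)$; hence the image of $\pi_\Theta$ is $\Pic_u(R,D)/\Pic(D,\Theta)$, and this is the group $\Delta$ that must be shown to sit in the first sequence. The whole problem thus reduces to locating this image inside $\prod_{T\in\Theta}\Pic_u(RT,T)$, splitting off the $T_\infty$-coordinate from the rest.

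For the first sequence I would argue as follows. The composite of $\pi_\Theta$ with the projection to the $T_\infty$-coordinate is the extension map $\Pic_u(R,D)\longrightarrow\Pic_u(RT_\infty,T_\infty)$, which is surjective by Lemma~\ref{lemma:picTalpha-surj}, since $T_\infty=T_1$ in the derived sequence. To realize the summand $\bigoplus_{T\neq T_\infty}\Pic_u(RT,T)$ I would use single-coordinate lifts: fix a Jaffard overring $T\in\Theta\setminus\{T_\infty\}$ and a class $[J]\in\Pic_u(RT,T)$, taken integral and unitary by Corollary~\ref{cor:IKprinc}, and set $I:=J\cap R$. By Lemma~\ref{lemma:fg-intersec} the ideal $I$ is finitely generated over $R$, and it is invertible exactly as in Theorem~\ref{teor:exseq-inttype-jaffard}. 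Using Lemma~\ref{lemma:flatintersect} one gets $IT=JT\cap RT=J$, while for every $S\in\Theta\setminus\{T\}$ independence of $\Theta$ gives $TS=K$, hence $JS=JK=RK$ (as $J$ is unitary) and $IS=RK\cap RS=RS$; thus $[IT]=[J]$ and $[IS]=0$ for all $S\neq T$. These classes lie in the kernel of the $T_\infty$-projection and produce the standard generators of the direct sum, so $\bigoplus_{T\neq T_\infty}\Pic_u(RT,T)$ embeds into that kernel.

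The main obstacle is to prove that this kernel is \emph{exactly} the direct sum, i.e.\ that if $[I]\in\Pic_u(R,D)$ has $IT_\infty$ principal, then $IT$ is principal for all but finitely many $T\in\Theta\setminus\{T_\infty\}$. This is where local finiteness must be replaced by compactness, and it is the reason Lemma~\ref{lemma:numfin-nonprinc} was isolated. The delicate point is that the Zariski topology is defined on $\Over(D)$ with coefficients in $K$, so one cannot directly test principality of the $R$-ideal $I$; instead I would descend to a finitely generated ideal $L$ of $D$ with $LT_\infty$ principal that agrees with $I$ at the relevant localizations, and then apply Lemma~\ref{lemma:numfin-nonprinc} to $L$ to conclude that $\{T\mid LT\text{ not principal}\}$, and hence $\{T\neq T_\infty\mid IT\text{ not principal}\}$, is finite. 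Constructing such an $L$ when $\Theta$ is only compact (so that the finite-support construction of Theorem~\ref{teor:exseq-inttype} is unavailable) is the technical heart of the argument. Granting it, the kernel of the $T_\infty$-projection equals $\bigoplus_{T\neq T_\infty}\Pic_u(RT,T)$, which gives the first exact sequence.

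Finally, the second sequence follows from the first by the snake lemma, mirroring the end of Theorem~\ref{teor:exseq-inttype-jaffard}. I would write the first sequence both for $R$ and for the trivial algebra $R=D$ (where $\Pic_u(DT,T)=\Pic(T)$), and form the commutative diagram whose two rows are these short exact sequences and whose vertical maps are the extension maps $\Pic(T)\to\Pic_u(RT,T)$, the map $\Pic(D)/\Pic(D,\Theta)\to\Pic_u(R,D)/\Pic(D,\Theta)$ induced by $\iota$, and $\Pic(T_\infty)\to\Pic_u(RT_\infty,T_\infty)$. Each vertical map is split injective by Proposition~\ref{prop:locpic-directsum}, since $RT$ is a retract $T$-algebra by Proposition~\ref{prop:retract-extension}; their cokernels are, respectively, $\bigoplus_{T\neq T_\infty}\locpic_u(RT,T)$, $\locpic_u(R,D)$ and $\locpic_u(RT_\infty,T_\infty)$. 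Because all three verticals are injective, the six-term snake sequence collapses to the short exact sequence of cokernels, which is precisely the second claim.
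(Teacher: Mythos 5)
You follow the paper's proof almost step for step, and the parts you actually prove are right: you read $\Delta$ as the image of $\pi_\Theta$ (as the paper's own proof does, despite the word ``cokernel'' in the statement), identify it with $\Pic_u(R,D)/\Pic(D,\Theta)$ via Theorem \ref{teor:exseq-inttype}; you get surjectivity of the projection $\Delta\longrightarrow\Pic_u(RT_\infty,T_\infty)$ from Lemma \ref{lemma:picTalpha-surj}; you embed $\bigoplus_{T\neq T_\infty}\Pic_u(RT,T)$ into its kernel by the single-coordinate lifts $I=J\cap R$ (the paper obtains the same containment by citing Theorem \ref{teor:exseq-inttype-jaffard}, applied to the Jaffard families $\{T,T^\perp\}$; your explicit construction is the same argument and remains valid here, since completeness and independence give that every nonzero prime of $D$ survives in exactly one member of $\Theta$); and your snake-lemma diagram for the second sequence, with top row the first sequence for the algebra $R=D$, is exactly the paper's diagram, whose top row is \cite[Lemma 6.3]{PicInt}. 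The genuine problem is the step you explicitly ``grant'': that if $I$ is an integral unitary invertible ideal of $R$ with $IRT_\infty$ principal, then $IRT$ is principal for all but finitely many $T$. This is the only place where compactness of $\Theta$ enters, so granting it leaves the proposition unproved.

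Moreover, the repair you sketch cannot work as stated. If a finitely generated $D$-ideal $L$ satisfied $LRT=IRT$ for \emph{every} $T\in\Theta$, then the star-operation argument of Theorem \ref{teor:exseq-inttype} would force $I=LR$, so every component of $\pi_\Theta([I])$ would be extended from $\Pic(T)$ --- false for a general $[I]\in\ker\pi'$ (e.g.\ for a lift of a class outside $\iota_T(\Pic(T))$); and requiring agreement only at cofinitely many $T$ is circular, since cofiniteness of the locus where $IRT$ is principal is precisely what is to be proved. The correct fix is to rerun the \emph{proof} of Lemma \ref{lemma:numfin-nonprinc} on the $R$-ideal $I$ rather than invoke its statement (this is what the paper is implicitly doing when it cites the lemma for $I$), and pseudo-polynomiality is what makes this possible. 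By Lemma \ref{lemma:pseudopol-intersec-princ} applied to $RT_\infty$ over $T_\infty$, one may write $IRT_\infty=fRT_\infty$ with $f\in T_\infty\subseteq K$. Writing $I=(y_1,\ldots,y_n)R$, the relations $f=\sum_jy_js_j$ with $s_j\in RT_\infty$ and $y_jf^{-1}\in RT_\infty$ involve, after expanding each $s_j$ and each $y_jf^{-1}$ as a finite $R$-combination of elements of $T_\infty$, only finitely many elements of $K$. The Zariski-open set $\Omega\subseteq\Theta$ of those $T$ containing all these elements contains $T_\infty$ and satisfies $IRT=fRT$ for every $T\in\Omega$. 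The complement $\Lambda_0:=\Theta\setminus\Omega$ is closed in the compact space $\Theta$, hence compact, and consists of Jaffard overrings of $D$; the second half of the proof of Lemma \ref{lemma:numfin-nonprinc} (via \cite[Corollary 5]{compact-intersections}) shows $\Lambda_0$ is a Jaffard family of $\bigcap\{T\mid T\in\Lambda_0\}$, hence locally finite. Picking $0\neq d\in I\cap D$ (unitarity), $d$ is a unit in all but finitely many $T\in\Lambda_0$, so $IRT=RT$ for those $T$. Inserting this argument at the point you granted makes your proof complete.
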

\begin{proof}
The inclusion of $\Delta$ into the direct sum $\prod_{T\in\Theta}\Pic_u(RT,T)$ induces a projection map $\pi':\Delta\longrightarrow\Pic_u(RT_\infty,T_\infty)$, which is surjective since it factorizes the surjective extension map $\Pic_u(R,D)\longrightarrow\Pic_u(RT_\infty,T_\infty)$.

The kernel of $\pi'$ contains exactly the extensions of the classes $[I]\in\Pic_u(R,D)$ such that $I$ becomes principal in each $RT$, for $T\in\Theta\setminus\{T_\infty\}$. Using Theorem \ref{teor:exseq-inttype-jaffard}, we obtain that the direct sum $\bigoplus\{\Pic_u(RT,T)\mid T\in\Lambda\}$ is contained in the kernel; conversely, if $[I]\in\ker\pi'$ then $IT_\infty=fT_\infty$ for some $f$ and thus $I$ is not principal for only finitely many $T\in\Theta$ (Lemma \ref{lemma:numfin-nonprinc}); thus the kernel is contained in the direct sum. Therefore $\ker\pi'=\bigoplus\{\Pic_u(RT,T)\mid T\in\Lambda\}$, and the first sequence in the statement is exact.

The diagram
\begin{equation*}
\begin{tikzcd}
0\arrow[r] & \displaystyle{\bigoplus_{T\in\Theta\setminus\{T_\infty\}}}\Pic(T) \arrow[r]\arrow[d] & \displaystyle{\frac{\Pic(D)}{\Pic(D,\Theta)}} \arrow[r]\arrow[d] & \Pic(T_\infty)\arrow[d] \arrow[r] & 0\\
0\arrow[r] & \displaystyle{\bigoplus_{T\in\Theta\setminus\{T_\infty\}}}\Pic_u(RT,T) \arrow[r] & \Delta\arrow[r] & \Pic_u(RT_\infty,T_\infty)\arrow[r] & 0
\end{tikzcd}
\end{equation*}
commutes, the vertical maps are injective, and the horizontal rows are exact (the top one by \cite[Lemma 6.3]{PicInt}, the bottom one by the previous part of the proof). Applying the snake lemma, we obtain an exact sequence
\begin{equation*}
0\longrightarrow\bigoplus_{T\in\Theta\setminus\{T_\infty\}}\locpic_u(RT,T)\longrightarrow\frac{\Delta}{\Pic(D)/\Pic(D,\Theta)}\longrightarrow\locpic_u(RT_\infty,T_\infty)\longrightarrow 0,
\end{equation*}
that becomes the one in the statement by noting that
\begin{equation*}
\frac{\Delta}{\Pic(D)/\Pic(D,\Theta)}=\frac{\Pic_u(R,D)/\Pic(D,\Theta)}{\Pic(D)/\Pic(D,\Theta)}\simeq\frac{\Pic_u(R,D)}{\Pic(D)}=\locpic_u(R,D)
\end{equation*}
by definition.
\end{proof}

The following is analogous to Theorem 7.3 of \cite{PicInt}.
\begin{teor}\label{teor:preJaff}
Let $D$ be an integral domain and let $R$ be a pseudo-polynomial retract $D$-algebra. Let $\Theta$ be a pre-Jaffard family of $D$, and let $\{T_\alpha\}$ be the derived series of $D$. Fix an ordinal $\alpha$ and suppose that $\locpic_u(RT,T)$ is a free group for each $T\in\Theta\setminus\njaff^\alpha(\Theta)$. Then, there is an exact sequence
\begin{equation*}
0\longrightarrow\bigoplus_{T\in\Theta\setminus\njaff^\alpha(\Theta)}\locpic_u(RT,T)\longrightarrow\locpic_u(R,D)\longrightarrow \locpic_u(RT_\alpha,T_\alpha)\longrightarrow 0.
\end{equation*}
\end{teor}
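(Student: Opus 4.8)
The plan is to argue by transfinite induction on $\alpha$, following the scheme of \cite[Theorem 7.3]{PicInt}, with Proposition \ref{prop:weakJaff} supplying the single successor step and the freeness hypothesis supplying the splittings. Note first that since $\{\njaff^\beta(\Theta)\}_\beta$ is decreasing we have $\Theta\setminus\njaff^\gamma(\Theta)\subseteq\Theta\setminus\njaff^\alpha(\Theta)$ for every $\gamma\leq\alpha$, so the freeness assumed at $\alpha$ subsumes all the freeness needed at the intermediate stages. For $\alpha=0$ we have $\njaff^0(\Theta)=\Theta$ and $T_0=D$, so the index set $\Theta\setminus\njaff^0(\Theta)$ is empty and the asserted sequence collapses to $0\to 0\to\locpic_u(R,D)\xrightarrow{\ \mathrm{id}\ }\locpic_u(R,D)\to 0$, which is exact; this is the base case.

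For the successor passage $\gamma\to\gamma+1$, recall that $\njaff^\gamma(\Theta)$ is a pre-Jaffard family of $T_\gamma$ whose Jaffard overrings are precisely the members of $\njaff^\gamma(\Theta)\setminus\njaff^{\gamma+1}(\Theta)$, while the remaining members are collapsed into $T_{\gamma+1}=\bigcap\njaff^{\gamma+1}(\Theta)$. The first thing I would establish is that the coarsened family $(\njaff^\gamma(\Theta)\setminus\njaff^{\gamma+1}(\Theta))\cup\{T_{\gamma+1}\}$ is a weak Jaffard family of $T_\gamma$ pointed at $T_{\gamma+1}$ (a structural property of the derived sequence, cf. \cite{jaff-derived,PicInt}). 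Since $RT_\gamma$ is again a pseudo-polynomial retract $T_\gamma$-algebra (Proposition \ref{prop:retract-extension}, exactly as used in Lemma \ref{lemma:picTalpha-surj}) and $(RT_\gamma)T=RT$ for every overring $T\supseteq T_\gamma$, applying Proposition \ref{prop:weakJaff} to this weak Jaffard family produces the one-step exact sequence
\begin{equation*}
0\longrightarrow\bigoplus_{T\in\njaff^\gamma(\Theta)\setminus\njaff^{\gamma+1}(\Theta)}\locpic_u(RT,T)\longrightarrow\locpic_u(RT_\gamma,T_\gamma)\longrightarrow\locpic_u(RT_{\gamma+1},T_{\gamma+1})\longrightarrow 0.
\end{equation*}

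Next I would splice this with the inductive hypothesis at $\gamma$. Composing the two surjections $\locpic_u(R,D)\twoheadrightarrow\locpic_u(RT_\gamma,T_\gamma)\twoheadrightarrow\locpic_u(RT_{\gamma+1},T_{\gamma+1})$ (whose composite is the extension map to $T_{\gamma+1}$), the kernel $\mathcal{K}$ of the composite fits into the short exact sequence
\begin{equation*}
0\longrightarrow\bigoplus_{T\in\Theta\setminus\njaff^\gamma(\Theta)}\locpic_u(RT,T)\longrightarrow\mathcal{K}\longrightarrow\bigoplus_{T\in\njaff^\gamma(\Theta)\setminus\njaff^{\gamma+1}(\Theta)}\locpic_u(RT,T)\longrightarrow 0,
\end{equation*}
in which the left term is the kernel of the first surjection (inductive hypothesis) and the right term is the kernel of the second. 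Because $\njaff^\gamma(\Theta)\setminus\njaff^{\gamma+1}(\Theta)\subseteq\Theta\setminus\njaff^\alpha(\Theta)$, the rightmost group is a direct sum of the free groups $\locpic_u(RT,T)$, hence free, so the sequence splits and $\mathcal{K}\simeq\bigoplus_{T\in\Theta\setminus\njaff^{\gamma+1}(\Theta)}\locpic_u(RT,T)$, the two index sets forming a disjoint union. This gives the statement at $\gamma+1$. This splitting is exactly where the freeness hypothesis is indispensable, and it is the main obstacle of the argument: without it the two kernels only assemble into a possibly non-split extension, and the clean direct-sum description breaks down.

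Finally, at a limit ordinal $\alpha$ I would pass to the filtered colimit over $\gamma<\alpha$. Here $\njaff^\alpha(\Theta)=\bigcap_{\gamma<\alpha}\njaff^\gamma(\Theta)$ and $T_\alpha=\bigcup_{\gamma<\alpha}T_\gamma$ \cite[Lemma 7.1]{PicInt}, so $RT_\alpha=\bigcup_{\gamma<\alpha}RT_\gamma$ and, by the finite-generation argument of \cite[Lemma 5.1]{PicInt}, $\locpic_u(RT_\alpha,T_\alpha)=\varinjlim_{\gamma<\alpha}\locpic_u(RT_\gamma,T_\gamma)$. The inductive sequences for the various $\gamma<\alpha$ form a filtered system with constant middle term $\locpic_u(R,D)$; since $\Theta\setminus\njaff^\alpha(\Theta)=\bigcup_{\gamma<\alpha}(\Theta\setminus\njaff^\gamma(\Theta))$, the colimit of the left-hand terms is $\bigoplus_{T\in\Theta\setminus\njaff^\alpha(\Theta)}\locpic_u(RT,T)$. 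As filtered colimits of abelian groups are exact, taking the colimit of the inductive sequences yields the exact sequence in the statement at $\alpha$, closing the induction.
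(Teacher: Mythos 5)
Your proposal is correct and shares the paper's overall skeleton --- transfinite induction whose successor step combines the inductive hypothesis with the one-step weak Jaffard result and the freeness splitting --- but it diverges genuinely at limit ordinals, and it is worth comparing the two routes. At successors you and the paper do the same thing (the paper packages your ``kernel of a composite of two surjections'' as a snake-lemma diagram and then splits using freeness); in fact you are more careful than the paper on one point, since you explicitly note that Proposition \ref{prop:weakJaff} must be applied to the coarsened family $(\njaff^\gamma(\Theta)\setminus\njaff^{\gamma+1}(\Theta))\cup\{T_{\gamma+1}\}$ of $T_\gamma$, whereas the paper's base case simply asserts that ``$\Theta$ is a weak Jaffard family'' when $\alpha=1$, which is literally true only after this coarsening. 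At limit ordinals the paper does \emph{not} take a colimit of exact sequences: it considers the chain of kernels $L_\gamma=\ker\bigl(\locpic_u(R,D)\to\locpic_u(RT_\gamma,T_\gamma)\bigr)$, observes that each is free and a direct summand of the later ones, and invokes a structure lemma on unions of chains of free summands (\cite[Lemma 5.6]{SP-scattered}, or Fuchs) to conclude that $L_\alpha$ is the direct sum of the consecutive quotients; you instead use exactness of filtered colimits together with the identification $\locpic_u(RT_\alpha,T_\alpha)\simeq\varinjlim_{\gamma<\alpha}\locpic_u(RT_\gamma,T_\gamma)$. The required inputs are comparable: the injectivity half of your colimit identification is exactly the paper's unproved assertion that $L_\alpha=\bigcup_{\gamma<\alpha}L_\gamma$, and both rest on the finite-generation argument of \cite{PicInt}. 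What your route buys is that freeness is invoked only for the successor-step splittings and no structure theory of free abelian groups is needed at limits; what it costs is that, for the inductive sequences to form a directed system, the induction must be strengthened to produce \emph{compatible} injections $\bigoplus_{T\in\Theta\setminus\njaff^\gamma(\Theta)}\locpic_u(RT,T)\to\locpic_u(R,D)$, each extending the previous one. This compatibility does come out of your construction (the successor splitting extends the old map, and at limits the map is the colimit of the earlier ones), but it should be stated as part of the inductive claim; the paper's chain-lemma route avoids any choice of compatible splittings, at the price of using the freeness hypothesis also at the limit stage.
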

\begin{proof}
By induction on $\alpha$: if $\alpha=1$ then $\Theta$ is a weak Jaffard family and the claim follows from Proposition \ref{prop:weakJaff}.

If $\alpha=\gamma+1$ is a successor ordinal, then we have a commutative diagram
\begin{equation}\label{eq:diagpreJaff}
\begin{tikzcd}
0\arrow[r] & \displaystyle{\bigoplus_{T\in\Theta\setminus\njaff^\gamma(\Theta)}\locpic_u(RT,T)}\arrow[r]\arrow[d,"f"] &\locpic_u(R,D)\arrow[r]\arrow[d,equal] &\locpic_u(RT_\gamma,T_\gamma)\arrow[r]\arrow[d,"g"] & 0\\
0\arrow[r] & L\arrow[r] &\locpic_u(R,D)\arrow[r] &\locpic_u(RT_\alpha,T_\alpha)\arrow[r] & 0,
\end{tikzcd}
\end{equation}
where $L$ is the kernel of $\locpic_u(R,D)\longrightarrow\locpic_u(RT_\alpha,T_\alpha)$. The two rows are exact: the first one by induction, the second one by definition and by Lemma \ref{lemma:picTalpha-surj}. The snake lemma and the fact that the middle vertical arrow is the identity give an exact sequence 
\begin{equation*}
0\longrightarrow \bigoplus_{T\in\Theta\setminus\njaff^\gamma(\Theta)}\locpic_u(RT,T)\longrightarrow L\longrightarrow \bigoplus_{T\in\njaff^\gamma(\Theta)\setminus\njaff^\alpha(\Theta)}\locpic_u(RT,T)\longrightarrow 0,
\end{equation*}
which splits since $\locpic_u(RT,T)$ is free for $T\in\njaff^\gamma(\Theta)\setminus\njaff^\alpha(\Theta)$, by hypothesis. The claim follows reading the second row of \eqref{eq:diagpreJaff}.

If $\alpha$ is a limit ordinal, and $L_\gamma$ is the kernel of the surjective map $\locpic_u(R,D)\longrightarrow\locpic_u(RT_\gamma,T_\gamma)$, then $\{L_\gamma\}_{\gamma<\alpha}$ is a chain of free subgroups such that every element is a direct summand of the following ones and whose union is $L_\alpha$. Therefore, by \cite[Lemma 5.6]{SP-scattered} (or \cite[Chapter 3, Lemma 7.3]{fuchs-abeliangroups}) $L_\alpha$ is the direct sum of the quotients 
\begin{equation*}
\frac{L_{\gamma+1}}{L_\gamma}\simeq\frac{\locpic_u(RT_{\gamma+1},T_{\gamma+1})}{\locpic_u(RT_\gamma+,T_\gamma)}\simeq\bigoplus_{T\in\njaff^{\gamma+1}(\Theta)\setminus\njaff^\gamma(\Theta)}\locpic_u(RT,T).
\end{equation*}
Hence, $L_\alpha\simeq\bigoplus\{\locpic_u(RT,T)\mid T\in\Theta\setminus\njaff^\alpha(\Theta)\}$. The claim is proved.
\end{proof}

\begin{cor}
Let $D$ be an integral domain and let $R$ be a pseudo-polynomial retract $D$-algebra. Let $\Theta$ be a sharp pre-Jaffard family of $D$, and suppose that $\locpic_u(RT,T)$ is a free group for each $T\in\Theta$. Then, there is an exact sequence
\begin{equation*}
\locpic_u(R,D)\simeq\bigoplus_{T\in\Theta}\locpic_u(RT,T)
\end{equation*}
\end{cor}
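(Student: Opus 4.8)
The plan is to specialize Theorem~\ref{teor:preJaff} to the ordinal at which the derived sequence terminates, and then to kill the quotient term using Corollary~\ref{cor:Picu-K}.

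First I would fix, using the sharpness of $\Theta$, an ordinal $\alpha$ with $T_\alpha=K$, and determine $\njaff^\alpha(\Theta)$ for this $\alpha$. The key observation is that $\njaff^\alpha(\Theta)=\emptyset$: indeed $\njaff^\alpha(\Theta)\subseteq\Theta$, so every member $T$ of it is a flat overring of $D$ with $T\neq K$, hence $T\subsetneq K$; if the family were nonempty we would have $T_\alpha=\bigcap\{T\mid T\in\njaff^\alpha(\Theta)\}\subseteq T\subsetneq K$, contradicting $T_\alpha=K$. Consequently $\Theta\setminus\njaff^\alpha(\Theta)=\Theta$, so the freeness requirement of Theorem~\ref{teor:preJaff} (that $\locpic_u(RT,T)$ be free for every $T\in\Theta\setminus\njaff^\alpha(\Theta)$) is precisely the hypothesis assumed here, and the index set of the direct sum is all of $\Theta$.

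Next I would invoke Theorem~\ref{teor:preJaff} with this $\alpha$ to obtain the exact sequence
\[
0\longrightarrow\bigoplus_{T\in\Theta}\locpic_u(RT,T)\longrightarrow\locpic_u(R,D)\longrightarrow\locpic_u(RK,K)\longrightarrow 0.
\]
The decisive step is that the rightmost term vanishes: since $K$ is a field and $RK$ is a $K$-algebra, Corollary~\ref{cor:Picu-K} gives $\Pic_u(RK,K)=(0)$, whence its quotient $\locpic_u(RK,K)$ is trivial as well. Thus the leftmost map is an isomorphism, which is exactly the asserted $\locpic_u(R,D)\simeq\bigoplus_{T\in\Theta}\locpic_u(RT,T)$.

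I do not expect a genuine obstacle: the corollary is a clean reading of Theorem~\ref{teor:preJaff} in the sharp case. The only point that needs care is the identification $\njaff^\alpha(\Theta)=\emptyset$ (equivalently, recognizing that $T_\alpha=K$ arises exactly from the empty intersection defining $T_\alpha$), after which the vanishing of $\locpic_u(RK,K)$ and the resulting isomorphism follow formally.
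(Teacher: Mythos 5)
Your proof is correct and follows essentially the same route as the paper: apply Theorem~\ref{teor:preJaff} at an ordinal $\alpha$ with $T_\alpha=K$, then kill the cokernel $\locpic_u(RK,K)$ via Corollary~\ref{cor:Picu-K}. The only difference is that you make explicit the (correct) observation that $T_\alpha=K$ forces $\njaff^\alpha(\Theta)=\emptyset$, so the direct sum runs over all of $\Theta$ --- a point the paper's proof leaves implicit.
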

\begin{proof}
If $\Theta$ is sharp, by definition there is an ordinal $\alpha$ such that $T_\alpha=K$. By the previous theorem, the cokernel of $\bigoplus_{T\in\Theta}\locpic_u(RT,T)\longrightarrow\locpic_u(R,D)$ is $\locpic_u(RK,K)$. By Corollary \ref{cor:Picu-K}, the latter is a quotient of the trivial group $\Pic_u(RK,K)$, and thus it is itself trivial. The claim is proved.
\end{proof}

\bibliographystyle{plain}
\bibliography{/bib/articoli,/bib/miei,/bib/libri}

\end{document}